\def \[{\begin{equation}}
\def \]{\end{equation}}
\newtheorem{thm}{Theorem}[section]
\newtheorem{observation}{Observation}
\newtheorem{lem}[thm]{Lemma}
\newtheorem{cor}[thm]{Corollary}
\newenvironment{kst}
\begin{document}

\title{2-extendability of (4,5,6)-fullerenes\footnote{This work is supported by %NSFC
the National Natural Science Foundation of China (Grant Nos. 11871256, 12271229, 12101220) and the Foundation of Hunan Provincial Education Department (Grant No. 21C0445).}}
\author{ Lifang Zhao$^{a,b}$, Heping Zhang${^b}$\footnote{Corresponding author.}\\
{\small $^a$College of Science, Hunan University of Technology, Zhuzhou, Hunan 412000, P.R. China}\\
{\small $^b$School of Mathematics and Statistics, Lanzhou  University,  Lanzhou, Gansu 730000, P.R. China}\\
{\small E-mails: zhaolifang@hut.edu.cn, zhanghp@lzu.edu.cn}}
\vspace{0.5mm}

\date{}

\maketitle

\noindent {\bf Abstract}: A (4,5,6)-fullerene is a plane cubic graph whose faces are only quadrilaterals, pentagons and hexagons, which  includes all (4,6)- and (5,6)-fullerenes.  A connected graph $G$ with at least $2k+2$ vertices  is $k$-extendable if $G$ has  perfect matchings  and  any matching of size $k$ is contained in a perfect matching of $G$.
We know that each (4,5,6)-fullerene graph is 1-extendable and at most 2-extendable.
It is natural to wonder which (4,5,6)-fullerene graphs are 2-extendable. In this paper, we completely solve this problem (see Theorem \ref{th2}): All non-2-extendable (4,5,6)-fullerenes consist of four sporadic (4,5,6)-fullerenes ($F_{12},F_{14},F_{18}$ and $F_{20}$) and five classes of (4,5,6)-fullerenes.
As a surprising consequence, we find that all (4,5,6)-fullerenes with the anti-Kekul\'{e} number 3 are non-2-extendable. Further, there also always exists a non-2-extendable (4,5,6)-fullerene with arbitrarily even $n\geqslant10$ vertices.

\vspace{2mm} \noindent{\it Keywords}: (4,5,6)-fullerene; perfect matching; 2-extendability;
anti-Kekul\'{e} number

\vspace{2mm}

\noindent{AMS subject classification:} 05C70;\ 05C90;\ 92E10

{\setcounter{section}{0}
\section{\normalsize Introduction}\setcounter{equation}{0}
An edge subset $M$ of a graph $G$ is called a \textit{matching} if any two edges of it have no an endvertex in common.  A \textit{perfect matching} (or Kekul\'{e} structure in chemical literature) of a graph $G$ is a matching such that every vertex of $G$ is incident with one edge of it.
A connected graph $G$ with at least $2n+2$ vertices is said to be \textit{$n$-extendable} if $G$ has a matching of size $n$ and each such matching is contained in a perfect matching \cite{4}, while the maximum integer $k$ such that $G$ is $k$-extendable is called the \textit{extendability} of $G$.
As we known,  no planar graph is 3-extendable \cite{11}. Further, %many planar graphs are 1-extendable. For instance,
 any 2-edge connected cubic graph is 1-extendable by a result of Berge \cite{12} %(Theorem 13 of page 160) and of Cruse \cite{13}
  (see also  \cite{13, 4}).
Hence, an interesting problem is to characterize  the 2-extendable planar graphs.
In 1987, Holton and Plummer \cite{1} showed that a 3-regular, 3-connected planar graph which is cyclically 4-edge connected and has no faces of size 4 is 2-extendable. They \cite{7} also proved that an $(n+1)$-regular, $(n+1)$-connected bipartite graph with cyclical edge-connectivity at least $n^2$ is $n$-extendable.
Such two results implies that all (5,6)-fullerenes
are 2-extendable \cite{2}, and all (4,6)-fullerenes
with cyclical edge-connectivity 4 are also 2-extendable \cite{3} respectively. But, (4,6)-fullerenes with cyclical edge-connectivity 3 (denoted by $\mathcal{T}$, which is denfined in Section 2) are non-2-extendable\cite{3}.
Moreover, Plummer \cite{14} showed that all 5-connected even planar graphs are 2-extendable.
For a bipartite graph,
Lakhal et al. \cite{Al-98} and Zhang et al. \cite{Al-06} showed that there is a polynomial algorithm to determine its extendability. However, for general graphs, Hackfeld and Koster \cite{complexity of GN}  presented  that the extendability problem is co-NP-complete.
For more works on matching extension, one can see refs. \cite{D.Lou,Q.Yu1,18,17}.

A (4,5,6)-fullerene (graph) is a plane (or spherical) cubic graph whose faces are only quadrilaterals, pentagons and hexagons. %(5,6)-fullerenes are plane (or spherical) cubic graph each of which contains only pentagons and hexagons while (4,6)-fullerenes are plane (or spherical) cubic graph each of which contains only quadrilaterals and hexagons \cite{BN,6}.  Clearly,  (4,5,6)-fullerenes include all (4,6)-fullerenes and (5,6)-fullerenes.
(5,6)-fullerenes and (4,6)-fullerenes are special (4,5,6)-fullerenes that does not contain quadrilaterals and pentagons respectively.
Further, (5,6)-fullerenes have the cyclical edge-connectivity 5 and (4,6)-fullerenes have the cyclical edge-connectivity 3 or 4 \cite{c-(56), c-(46)}. No matter from the view of mathematic or  chemistry, the structural properties of (4,6)- and (5,6)-fullerenes have been extensively investigated. For mathematical studies of fullerenes, one can see the recent survey \cite{(56)-survey} and references contained in it.
We know that each (4,5,6)-fullerene is
1-extendable. Hence, we  investigate the 2-extendability of (4,5,6)-fullerenes in this paper.

In the next section, we recall some concepts and results needed in our discussions.  In Section 3, by using the strength Tutte's theorem on perfect matching of graphs, we characterize all (4,5,6)-fullerenes that are non-2-extendable, which consist of $F_{12}, F_{14}, F_{18}, F_{20}$, all  (4,5,6)-fullerenes in $\mathcal{T}$ and the other four classes of (4,5,6)-fullerenes having at least 2 and at most 10 pentagons.
Specially, we find that all (4,5,6)-fullerenes with anti-Kekul\'{e} number 3 are non-2-extendable.
 Further, for any even $n\geqslant10$,  there always exists a (4,5,6)-fullerene with $n$ vertices being non-2-extendable.

\section{\normalsize Definitions and preliminaries}\setcounter{equation}{0}

Throughout this paper, we only consider simple graphs. For a graph $G$,  let $V(G)$ denote the vertex-set, $E(G)$ the edge-set and $F(G)$ the face set if $G$ is a plane graph. For the  notation and terminologies not stated in this paper, a reader is referred to \cite{5,4}.

For a graph $G$, denote $N_G(v)$ or briefly $N(v)$ the set of \textit{neighbors} of a vertex $v\in V(G)$.
The number of edges incident with $v$ in $G$ is called the \textit{degree} of $v$, written by $d_G(v)$ or simply by $d(v)$.
$S\subseteq V(G)$ is called an \textit{vertex independent set} if any two vertices of $S$ is non-adjacent.
For $S\subseteq V(G)$, $G-S$  is  the subgraph  obtained from $G$ by deleting all the vertices in $S$ and their incident edges,  while  $G[S]$ is the \textit{induced subgraph}  by $S$ defined as the subgraph $G-\bar{S}$, where $\bar{S}=V(G)-S$. For two subgraphs $H_1$ and $H_2$ of $G$, denote $H_1\cap H_2$ be the graph with vertex set $V(H_1)\cap V(H_2)$ and edge set $E(H_1)\cap E(H_2)$.
For a graph $G$, $S\subseteq V(G)$ is said to be a \textit{vertex cut} of $G$ if $G-S$ is disconnected, and
$G$ is said to be \textit{$k$-connected} if $k< |V(G)|$ and $G-S$ is connected for any $S\subseteq V(G)$ with $|S|<k$.
The \textit{connectivity} of $G$ is  the greatest integer $k$ for which $G$ is $k$-connected.
For $E_0\subseteq E(G)$, $G-E_0$ is the subgraph of $G$ obtained by deleting the edges of $E_0$ and $V(E_0)$ is the set of the endvertices of edges in $E_0$.
For $S\subseteq V(G)$, let $\overline{S}=V(G)\setminus S$. Let $[S, \overline{S}]$ be the set of edges of $G$ with one endvertex in $S$ and the other one in $\overline{S}$. We call $[S, \overline{S}]$  an \textit{edge cut} if both $S$ and $\overline{S}$ are not empty. Particularly, $[S, \overline{S}]$ is called \textit{trivial} if $S$ or $\overline{S}$  only contains one vertex. A connected graph $G$ is said to be \textit{cyclically $k$-edge connected} if $G$ cannot be separated into two components, each containing a cycle, by deleting less than $k$ edges. The maximum $k$ such that $G$ is cyclically $k$-edge connected is called the \textit{cyclical edge-connectivity} of $G$.
 A cycle of a plane graph is called a \textit{facial cycle} if it is the boundary of a face. Denote by $P_n$  a path with $n$ vertices.

Let $T_n$ be a (4,5,6)-fullerene that consists of $n$ concentric layers of hexagons and capped on each end by a cap formed by three quadrilaterals with one common vertex. Further, we call the edges between two concentric 6-cycles (each of which is not the boundary of a face) \textit{traversed edges}, such as $T_3$; see Fig. \ref{T_n}. Denoted by $\mathcal{T}=\{T_n|n\geqslant1\}$.
\begin{figure}[!htbp]
\begin{center}
\includegraphics[totalheight=26mm]{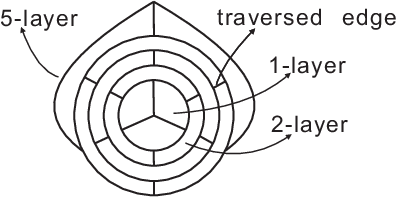}
 \caption{\label{T_n}\small{Illustration for a (4,5,6)-fullerene $T_3$ in $\mathcal{T}$, layer and traversed edge.}}
\end{center}
\end{figure}

\begin{lem}[\cite{8}]\label{huan}
A (4,5,6)-fullerene graph is cyclically 4-edge connected if and only if it does not belong to  $\mathcal{T}$.
\end{lem}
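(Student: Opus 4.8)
The plan is to prove the two implications separately; the forward implication is routine and the converse carries all the difficulty. For the easy direction I prove the contrapositive: if $G=T_n\in\mathcal{T}$ then $G$ is not cyclically $4$-edge connected. Each polar cap of $T_n$ is bounded by a $6$-cycle $C$, three of whose alternate vertices carry the three edges joining $C$ to the adjacent layer (the traversed edges), the other three vertices being joined inward to the centre of the cap. Deleting these three edges separates the three quadrilaterals of the cap from the rest of $G$; one side contains a quadrilateral and the other contains the opposite cap, so both sides contain a cycle and we have exhibited a cyclic $3$-edge cut. Hence $G$ is not cyclically $4$-edge connected.

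For the converse I again argue contrapositively: assuming $G$ is a $(4,5,6)$-fullerene that is not cyclically $4$-edge connected, I aim to show $G\in\mathcal{T}$. Since $G$ is cubic and $3$-connected its edge-connectivity is $3$, so every edge cut has at least three edges. I first record that $G$ has girth at least $4$: in a $3$-connected cubic plane graph every triangle bounds a face, since otherwise one of its two sides would be joined to it by a single edge, contradicting $3$-edge-connectivity; as there are no triangular faces, there are no triangles. Consequently a cyclic edge cut of size less than $4$ consists of exactly three edges, and its two sides $S,\overline{S}$ each contain a cycle of length at least $4$. A degree count gives $3|S|=2|E(G[S])|+3$, so $|S|$ and $|\overline{S}|$ are odd and hence at least $5$; moreover no two of the three cut edges share a vertex, for otherwise that vertex together with the remaining cut edge would yield a $2$-edge cut.

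The main tool is the curvature identity $\sum_{f\in F(G)}(6-|f|)=12$, equivalent to $2f_4+f_5=12$ and obtained from Euler's formula together with $3|V(G)|=2|E(G)|$. Closing $S$ off by a new degree-$3$ vertex $w$ joined to the three endpoints of the cut edges in $S$ produces a cubic plane graph in which the three faces incident with $w$ are exactly the $S$-parts of the three faces $f_1,f_2,f_3$ that the cut curve crosses; applying the identity to this graph yields $2a+b=k_1+k_2+k_3$, where $a$ and $b$ count the quadrilaterals and pentagons lying strictly inside $S$ and $k_i\geqslant 1$ is the number of boundary edges that $f_i$ contributes to $S$. In particular the outer boundary of $G[S]$ is a cycle of length $k_1+k_2+k_3=2a+b$, and the same holds for $\overline{S}$. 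Taking a cyclic $3$-edge cut with $|S|$ minimum, the plan is to show that this boundary is a $6$-cycle, that $f_1,f_2,f_3$ are hexagons, and that the six units of positive curvature on the minimal side are carried by three quadrilaterals meeting at one vertex, so that the minimal side is precisely a polar cap.

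Once the minimal side is identified as a cap, I would peel it off and iterate: the boundary $6$-cycle together with $f_1,f_2,f_3$ constitutes the first ring of three hexagons joined to the next $6$-cycle by three traversed edges, and repeating the boundary analysis on that $6$-cycle shows that the structure propagates as concentric rings of exactly three hexagons until the remaining positive curvature forces the opposite quadrilateral cap, whence $G=T_n$ for some $n$. I expect the rigidity step to be the main obstacle, namely proving that the minimal patch must be a quadrilateral cap rather than some other configuration, which amounts to ruling out pentagons and boundary cycles of length $4$ or $5$ on the minimal side, and then showing that each successive ring is again a uniform triple of hexagons. This is exactly the point at which the degree and curvature bookkeeping has to be combined with $3$-connectivity and the minimality of the cut to exclude every alternative; the concluding induction on the number of rings is then routine. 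A dual reformulation, in which $G^{*}$ is a triangulation with all degrees in $\{4,5,6\}$ and the cyclic $3$-edge cut becomes a separating triangle, may streamline this rigidity analysis.
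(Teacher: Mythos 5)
Your easy direction is correct and complete: the three edges joining a polar cap of $T_n$ to the adjacent layer form a cyclic $3$-edge cut separating that cap (which contains a quadrilateral) from the opposite cap, so no member of $\mathcal{T}$ is cyclically $4$-edge connected. Note also that the paper itself does not prove this lemma; it is imported from Ref.~\cite{8}, so the only proof to compare against is the one given there.

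The converse, however, is where all the difficulty lies, and your text never actually proves it. Everything you establish rigorously --- edge-connectivity $3$, girth at least $4$, independence of the three cut edges, oddness of $|S|$, the capping construction and the identity $2a+b=k_1+k_2+k_3$ --- is preparatory bookkeeping, and it is all correct. But the statement that carries the theorem, namely that for a cyclic $3$-edge cut with $|S|$ minimum the boundary of $G[S]$ is a $6$-cycle, the three crossing faces are hexagons, and the curvature inside $S$ is realized by three quadrilaterals sharing a vertex (followed by the claim that concentric rings of three hexagons propagate until the opposite cap closes off, forcing $G=T_n$), is introduced only with ``the plan is to show'' and ``I would peel it off and iterate,'' and you yourself concede that ``the rigidity step'' is ``the main obstacle.'' That obstacle \emph{is} the proof: a priori the identity $2a+b=k_1+k_2+k_3$ admits many solutions other than $(a,b)=(3,0)$ with $k_1=k_2=k_3=2$ --- for instance two quadrilaterals and two pentagons inside a $6$-cycle boundary, six pentagons, or boundary cycles of length other than $6$ --- and nothing you wrote excludes these or shows that minimality of $|S|$ does. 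Likewise the iteration step silently assumes that, after peeling a cap, the next layer again consists of exactly three hexagons attached by three independent traversed edges; this requires an argument that no quadrilateral or pentagon can abut the boundary $6$-cycle before the terminal cap. As written, the proposal is a plausible strategy outline with the decisive case analysis missing, so it does not constitute a proof of the lemma.
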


\begin{lem}[\cite{8}]\label{lem1}
Every (4,5,6)-fullerene has connectivity 3 and
 every 3-edge-cut of a (4,5,6)-fullerene not contained in $\mathcal{T}$ is trivial
\end{lem}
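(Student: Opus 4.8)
The plan is to prove the two assertions by combining Euler's formula, the regularity of $G$, and the cyclic edge-connectivity supplied by Lemma~\ref{huan}. Throughout let $G$ be a (4,5,6)-fullerene with $f_i$ faces of size $i$. As a preliminary I would record the standard count coming from $|V|-|E|+|F|=2$, $3|V|=2|E|$ and $2|E|=4f_4+5f_5+6f_6$, which yields $2f_4+f_5=12$; this is not needed for the connectivity claims themselves, but it fixes that $G$ has boundedly many non-hexagonal faces. The real reduction is the following: since $G$ is cubic we have $\kappa(G)\le\lambda(G)\le\delta(G)=3$, and for cubic graphs vertex- and edge-connectivity coincide, so it suffices to show $\lambda(G)=3$ and then, separately, to analyse the $3$-edge-cuts.

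To prove $\lambda(G)=3$ I would rule out edge-cuts of size $1$ and $2$. Since each face of $G$ is a $4$-, $5$-, or $6$-cycle, every edge lies on a cycle, so $G$ has no bridge. For a $2$-edge-cut $[S,\overline{S}]=\{e_1,e_2\}$, summing degrees over $S$ gives $3|S|=2|E(G[S])|+2$, so $|S|$ is even; $|S|=1$ is impossible (a single vertex has degree $3$) and $|S|=2$ is impossible in a simple graph (it would need two parallel edges), so $|S|\ge4$ and $|E(G[S])|=\tfrac{3|S|}{2}-1\ge|S|$, forcing a cycle in $G[S]$, and the same holds for $\overline{S}$. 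Thus $\{e_1,e_2\}$ would be a cyclic $2$-edge-cut. But each (4,5,6)-fullerene is cyclically $3$-edge-connected — those not in $\mathcal{T}$ by Lemma~\ref{huan}, and each $T_n\in\mathcal{T}$ because its cyclic edge-connectivity equals $3$ (as recalled in the introduction) — so no such cut exists. Hence $\lambda(G)=3$, and therefore $\kappa(G)=3$.

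For the second assertion, let $G\notin\mathcal{T}$ and let $[S,\overline{S}]$ be any $3$-edge-cut. Summing degrees over $S$ now gives $3|S|=2|E(G[S])|+3$, so $|S|$ (and likewise $|\overline{S}|$) is odd. If $|S|=1$ the cut is trivial. Otherwise $|S|\ge3$ and $|E(G[S])|=\tfrac{3(|S|-1)}{2}\ge|S|$, so $G[S]$ is not a forest and contains a cycle; by symmetry so does $G[\overline{S}]$, making the cut a cyclic $3$-edge-cut. Since $G\notin\mathcal{T}$, Lemma~\ref{huan} gives that $G$ is cyclically $4$-edge-connected, excluding any cyclic $3$-edge-cut. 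Hence a non-trivial $3$-edge-cut cannot occur, i.e. every $3$-edge-cut of $G$ is trivial.

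The degree counts are routine; the points needing care are the two structural inputs feeding into them. First, I must use that the faces of a (4,5,6)-fullerene really are simple $4$-, $5$-, or $6$-cycles, which legitimises ``every edge lies on a cycle'' and rules out cut-vertices (a cut-vertex would force a facial walk to revisit it); this is guaranteed by the definition of $G$ as a simple plane cubic graph. Second, and this is the main obstacle, the connectivity claim for members of $\mathcal{T}$ requires that every $T_n$ is cyclically $3$-edge-connected, a fact implicit in the introduction but not literally contained in Lemma~\ref{huan}; for a self-contained argument this must be verified directly from the layered structure of $T_n$, whose only small cyclic cut is the triple of traversed edges between two consecutive layers.
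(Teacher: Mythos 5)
Your argument is essentially correct, but there is nothing in this paper to compare it against: the lemma carries the citation to reference [8] (the authors' earlier paper on resonance of (4,5,6)-fullerenes) and is imported as a known fact, with no internal proof. Taken on its own merits, your reconstruction is sound and uses only material actually available here: the chain $\kappa\le\lambda\le\delta=3$ plus the standard fact that $\kappa=\lambda$ for cubic graphs; the parity/degree counts showing that any $1$- or $2$-edge-cut, and any nontrivial $3$-edge-cut, has a cycle on both sides (a forest on $|S|$ vertices has at most $|S|-1$ edges, while your counts give $|E(G[S])|\ge|S|$); and Lemma \ref{huan} to forbid such cyclic cuts when $F\notin\mathcal{T}$. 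Two refinements are worth recording. First, the one input Lemma \ref{huan} does not supply --- that each $T_n\in\mathcal{T}$ has no cyclic $2$-edge-cut --- need not be checked by hand from the layered structure: every $T_n$ is a (4,6)-fullerene, and the introduction (citing Do\v{s}li\'c) states that all (4,6)-fullerenes have cyclic edge-connectivity $3$ or $4$, which is exactly the lower bound you need. Second, since the paper's definition of cyclic $k$-edge-connectivity speaks of separating the graph into two components each containing a cycle, you should either take a \emph{minimum} edge cut in the first part (so both sides are connected), or note for the second part that once $\lambda=3$ is known, each component of $G[S]$ sends at least three edges across, so a $3$-edge-cut automatically has both $G[S]$ and $G[\overline{S}]$ connected. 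Neither point changes the substance; your proof stands as a valid derivation of a result the paper itself only quotes.
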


\begin{lem}[\cite{8}]\label{lem2}
Let $F$ be a (4,5,6)-fullerene. Then $F$ has no cycles with length 3 and every cycle with length 4 or 5 of $F$ is a facial cycle.
\end{lem}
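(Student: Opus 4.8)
The plan is to derive the entire statement from a single structural fact: every $(4,5,6)$-fullerene $F$ is $3$-edge-connected. Indeed, $F$ is cubic and, by Lemma \ref{lem1}, has connectivity $3$; since Whitney's inequality gives $\kappa(F)\le\lambda(F)\le\delta(F)=3$, the edge-connectivity satisfies $\lambda(F)=3$, so \emph{every} edge cut of $F$ uses at least three edges. I would then treat any short cycle $C$ (length $3$, $4$, or $5$) topologically: drawn on the sphere, $C$ bounds two closed disks $D_1,D_2$, and I count the edges leaving $C$ into each disk. Writing $a_i$ for the number of edges incident with $V(C)$ whose non-$C$ end lies in the interior of $D_i$, the balancing identity I want is $a_1+a_2=|C|$, which holds precisely when no vertex of $C$ spends its third edge on a chord of $C$.

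First I would show $F$ has no triangle, hence girth at least $4$. A $3$-cycle $C$ can never bound a face, since all faces are quadrilaterals, pentagons, or hexagons. If one disk, say $D_1$, contained no interior vertex, then (a triangle admits no chord in a simple graph) $D_1$ would be a single triangular face, a contradiction; so both $D_1$ and $D_2$ carry interior vertices. Taking $S$ to be the interior vertices of $D_i$, the cut $[S,\overline{S}]$ consists of exactly the $a_i$ edges running from $C$ into $D_i$, so $3$-edge-connectivity forces $a_i\ge 3$ for both $i$. Then $3=|C|=a_1+a_2\ge 6$, which is absurd.

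For the second part let $C$ be a cycle with $|C|=k\in\{4,5\}$ and suppose, for contradiction, that $C$ is not facial. I would first rule out chords: a chord splits $C$ into two shorter cycles whose lengths sum to $k+2$, so the shorter has length at most $\lfloor (k+2)/2\rfloor=3$, i.e.\ a triangle, contradicting the girth bound just proved. With no chords, every vertex of $C$ has exactly one edge leaving the cycle, distributed as $a_1+a_2=k$. Non-faciality means neither disk is a single $k$-gon face, so (again because there are no chords) both disks contain interior vertices; exactly as above, $3$-edge-connectivity yields $a_1\ge 3$ and $a_2\ge 3$, whence $k=a_1+a_2\ge 6>5\ge k$, a contradiction. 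Hence $C$ bounds a face.

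The arithmetic punchline $a_1+a_2\ge 6$ is immediate once the set-up is correct, so the real work is the bookkeeping that precedes it: verifying that a disk with no interior vertex is genuinely a single face (which is exactly where chord-elimination is needed) and checking that the cut isolating a disk's interior consists of precisely the $a_i$ boundary-crossing edges and nothing more. I expect this topological accounting to be the main obstacle, whereas the connectivity input is routine. It is worth noting that this approach never invokes the dichotomy between $\mathcal{T}$ and its complement, nor Lemma \ref{huan}: the uniform hypothesis of $3$-edge-connectivity handles all $(4,5,6)$-fullerenes at once, including those in $\mathcal{T}$.
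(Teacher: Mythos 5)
Your proof is correct, and in fact there is nothing in the paper to compare it against: the paper states this lemma as a citation from Ref.~\cite{8} and gives no proof of it, so your argument stands as a self-contained derivation within this paper's framework. The key steps all check out: Lemma \ref{lem1} gives $\kappa(F)=3$, and Whitney's inequality $\kappa\leqslant\lambda\leqslant\delta=3$ upgrades this to $\lambda(F)=3$; the Jordan-curve bookkeeping is sound, since by planarity an edge incident with an interior vertex of the disk $D_i$ can only reach another interior vertex or a vertex of $C$, so the cut isolating the interior of $D_i$ is exactly the $a_i$ crossing edges; chord elimination via the girth bound is right (a chord of a $k$-cycle, $k\in\{4,5\}$, creates a cycle of length at most $\lfloor(k+2)/2\rfloor=3$); and the step ``no interior vertices and no chords implies the disk is a face'' is justified precisely because chords were excluded first. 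Structurally, your route differs from what the fullerene literature (and, judging by Lemma \ref{huan}, the source paper \cite{8}) typically uses, namely cyclic edge-connectivity, which forces a case distinction because the graphs in $\mathcal{T}$ are only cyclically 3-edge-connected; your cuts $[S,\overline{S}]$ need not have cycles on both sides, so ordinary 3-edge-connectivity suffices and all (4,5,6)-fullerenes, including those in $\mathcal{T}$, are handled uniformly. One caveat worth recording: since both Lemma \ref{lem1} and Lemma \ref{lem2} are imported from \cite{8}, your derivation of the latter from the former is legitimate inside this paper, but it could not be transplanted back into \cite{8} without first checking that the connectivity result there is not itself proved using the facial-cycle property.
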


For the perfect matchings of $T_n\in \mathcal{T}$, Jiang et al. gave the following property.

\begin{lem}[\cite{3}]\label{T_n pm}
  Let $M$ be any perfect matching of $T_n$. Then $M$ contains exactly one traversed edge in each layer. Conversely, any set containing exactly one traversed edge from each layer is extended to a unique perfect matching.
\end{lem}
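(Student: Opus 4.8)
The plan is to work directly with the tube structure. Label the concentric non-facial $6$-cycles $C_0,C_1,\dots,C_n$ from one cap to the other, so that the three traversed edges of layer $i$ join $C_{i-1}$ to $C_i$; on each interior $C_i$ the six vertices split into two alternating triples, one whose incident traversed edge runs up into layer $i$ and one whose incident traversed edge runs down into layer $i+1$. Two ingredients drive the argument: a parity count on the $3$-edge-cuts formed by the traversed edges, and a forcing step that transports the matching from one cycle to the next.

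For the forward direction, note that by construction the three traversed edges of layer $i$ form a $3$-edge-cut $[S_i,\overline{S_i}]$, where $S_i$ contains the cap together with $C_0,\dots,C_{i-1}$; counting vertices gives $|S_i|=6i+1$, which is odd (consistent with $T_n$ having cyclical edge-connectivity $3$, Lemma~\ref{huan}). Since $|M\cap[S_i,\overline{S_i}]|\equiv|S_i|\pmod 2$ for every perfect matching $M$, each layer meets $M$ in an odd number of traversed edges, hence in $1$ or $3$. To rule out $3$, suppose all three traversed edges of layer $i$ lie in $M$. Then the three up-vertices of $C_i$ are matched upward, so each of the three mutually non-adjacent down-vertices of $C_i$ has both of its cycle-neighbours already saturated and is forced to use its downward traversed edge; thus all three traversed edges of layer $i+1$ also lie in $M$. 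Iterating pushes this to the last layer, where the cap makes it impossible: the cap has odd order, and if all three of its boundary attachment vertices were matched outward then its centre could saturate only one of the remaining vertices, leaving two unmatched. This contradiction shows each layer meets $M$ in exactly one traversed edge.

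For the converse I would reconstruct $M$ from a prescribed choice $t_1,\dots,t_n$ and show each step is forced. On each interior cycle $C_i$ the edges $t_i$ and $t_{i+1}$ saturate exactly two vertices, one from each alternating triple; removing them from the $6$-cycle leaves four vertices inducing either a $P_4$ or two disjoint edges, and in either case there is a unique way to match them along $C_i$. Hence the restriction of $M$ to the tube body is uniquely determined and covers every tube vertex once. It remains to complete each cap: the single incident traversed edge ($t_1$ at one end, $t_n$ at the other) saturates one boundary vertex, and one must check that the remaining cap vertices then admit a unique perfect matching.

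The heart of the converse, and the step I expect to be the main obstacle, is exactly this last local claim about the caps, where the special three-quadrilateral geometry (Fig.~\ref{T_n}), rather than the generic hexagonal body, has to be used; in the forward direction the analogous delicate point is excluding three traversed edges in a layer, which I handle by the independent-set/propagation argument above anchored at a cap. Granting the cap completions, one assembles the forced interior matchings together with the cap matchings into a single edge set, verifies directly that it is a perfect matching whose traversed edges are precisely $t_1,\dots,t_n$, and reads off uniqueness from the fact that no choice was ever free. Together the two directions give the claimed correspondence.
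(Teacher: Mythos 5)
Your forward direction is correct and complete: the parity count over the cuts $[S_i,\overline{S_i}]$ with $|S_i|=6i+1$ odd, the propagation step along the tube, and the $K_{1,3}$ obstruction inside the cap together show that every perfect matching of $T_n$ uses exactly one traversed edge between consecutive non-facial $6$-cycles. This is also the only part of Lemma \ref{T_n pm} that the paper actually uses (Lemma \ref{2-extendable of T_n} needs just this direction), and your interior forcing step for the converse (the unique completion on each interior $6$-cycle once $t_i$ and $t_{i+1}$ are fixed) is likewise correct.

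The genuine gap is exactly the cap-completion claim you flagged and postponed, and it is not merely unverified --- it is false under your reading of ``layer''. Write the cap as a centre $c$ with neighbours $v_1,v_2,v_3$ and boundary vertices $u_{12},u_{23},u_{13}$, where $u_{ij}$ is adjacent to $v_i$ and $v_j$, so the cap boundary is the $6$-cycle $v_1u_{12}v_2u_{23}v_3u_{13}$. If $t_1$ saturates $u_{12}$, the remaining six cap vertices admit \emph{three} perfect matchings, namely $\{cv_1,u_{23}v_2,u_{13}v_3\}$, $\{cv_2,u_{23}v_3,u_{13}v_1\}$ and $\{cv_3,u_{23}v_2,u_{13}v_1\}$, one for each choice of the partner of $c$. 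Consequently a choice of one traversed edge per hexagon ring extends to exactly nine perfect matchings (three per cap); for instance $T_1$ has $3^3=27$ perfect matchings in total. The uniqueness assertion is true only under the convention of the source \cite{3}, which is implicit in the paper's remark immediately after the lemma that one should look at layers $2\leqslant i\leqslant n+1$: the two caps are themselves counted as layers ($1$ and $n+2$), and the ``traversed'' edges of a cap layer are the three spokes $cv_i$ joining the centre to the cap boundary. With that convention your argument does close up: once the spoke of a cap and the traversed edge entering that cap are both prescribed, the two remaining boundary edges of the cap are forced, and combined with your interior forcing this yields both existence and uniqueness. So the repair is not a harder local verification but a reinterpretation of ``layer'' to include the caps with their spokes; without it, the uniqueness clause you set out to prove is simply not true.
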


Clearly,  there is no perfect matching of $T_n$ containing any two traversed edges of $i$-layer ($2\leqslant i\leqslant n+1$) by Lemma \ref{T_n pm}. Hence, we have the following result.
\begin{lem}\label{2-extendable of T_n}
Each (4,5,6)-fullerene in $\mathcal{T}$ is non-2-extendable.
\end{lem}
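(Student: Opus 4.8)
The plan is to directly contradict the definition of $2$-extendability by exhibiting, uniformly for every $T_n \in \mathcal{T}$, an explicit matching of size $2$ that is contained in no perfect matching. First I would record that the question is even meaningful: each $T_n$ has far more than $2\cdot 2 + 2 = 6$ vertices and admits a perfect matching (for instance by the second assertion of Lemma \ref{T_n pm}, choosing one traversed edge from each layer), so $T_n$ does have matchings of size $2$ and the only thing at issue is whether all of them extend.

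Next I would make the structural observation about a single layer of $T_n$. From the description of $T_n$ as two caps, each made of three quadrilaterals sharing a common vertex, joined by $n$ concentric layers of hexagons, each layer of traversed edges consists of three edges that connect alternate vertices of one concentric $6$-cycle to the next. Consequently these three traversed edges are pairwise vertex-disjoint, and any two of them already form a matching of size $2$ lying entirely within that layer. This is the only point that needs a little care, since the whole argument depends on each layer supplying at least two independent traversed edges; once this is in hand, the extension lemma does the rest.

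Then I would fix an arbitrary layer $i$ with $2 \leqslant i \leqslant n+1$ and set $M_0 = \{e_1, e_2\}$, where $e_1, e_2$ are two of the three disjoint traversed edges of that layer. By the first assertion of Lemma \ref{T_n pm}, every perfect matching of $T_n$ contains exactly one traversed edge in each layer, so no perfect matching can contain both $e_1$ and $e_2$; as already noted in the remark preceding the statement, there is no perfect matching of $T_n$ containing two traversed edges of the $i$-layer. Hence $M_0$ is a matching of size $2$ that extends to no perfect matching, so $T_n$ is not $2$-extendable. Since $i$ (and $n$) were arbitrary, this conclusion holds for every member of $\mathcal{T}$, which is exactly the assertion of the lemma.
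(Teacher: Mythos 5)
Your proof is correct and follows essentially the same route as the paper: the paper likewise invokes Lemma \ref{T_n pm} to note that no perfect matching of $T_n$ can contain two traversed edges from the same layer, so any two such (disjoint) edges form a non-extendable matching of size 2. Your additional checks (that $T_n$ has enough vertices, admits a perfect matching, and that traversed edges within a layer are pairwise disjoint) are sound but are treated as immediate in the paper.
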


A graph $G$ is called \textit{factor-critical} if  $G-v$ has a perfect matching for each vertex $v\in V(G)$.
Denote the components of $G-S$ by $\mathcal{C}_{G-S}$, where $S\subseteq V(G)$. Let $G_S$ be the graph which arises from $G$ by contracting every component of $\mathcal{C}_{G-S}$ to a single vertex and deleting all the edges inside $S$. If $G_S$ contains a matching $M$ such that every vertex of $S$ is incident with an edge in $M$, then we call the vertex set $S$ \textit{matchable} to $\mathcal{C}_{G-S}$.
 Thus, we will use the following general result.

\begin{thm}[\cite{5}]\label{th1}
  Every graph $G$ contains a vertex subset $S$ with the following two properties:
  \begin{kst}
\item[{\rm (i)}]$S$ is matchable to $\mathcal{C}_{G-S}$;
\item[{\rm (ii)}] Every component of $G-S$ is factor-critical.
\end{kst}
  Given any such set $S$, the graph $G$ contains a perfect matching if and only if $|S|=|\mathcal{C}_{G-S}|$.
\end{thm}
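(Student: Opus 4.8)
The statement naturally splits into two parts: the existence of a set $S$ satisfying (i) and (ii), and the criterion that, for any such $S$, the graph $G$ has a perfect matching if and only if $|S|=|\mathcal{C}_{G-S}|$. My plan is to dispose of the criterion first, since it follows directly from (i) and (ii), and then to prove existence by induction on $|V(G)|$ using a carefully chosen extremal set $S$.

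For the criterion, fix any $S$ with properties (i) and (ii). If $G$ has a perfect matching $M$, then because each component of $G-S$ is factor-critical it has odd order, so at least one of its vertices must be matched by $M$ to a vertex of $S$ (there are no edges between distinct components of $G-S$); assigning to each component such a partner in $S$ gives an injection $\mathcal{C}_{G-S}\to S$, whence $|\mathcal{C}_{G-S}|\leq|S|$, while (i) supplies the reverse inequality, forcing equality. Conversely, if $|S|=|\mathcal{C}_{G-S}|$, then the matching of $G_S$ witnessing (i) saturates both sides, i.e.\ it pairs each component $C$ with a distinct vertex of $S$ through an edge $s_Cc_C$ of $G$ with $c_C\in C$; combining these edges with a perfect matching of each $C-c_C$, available by factor-criticality, yields a perfect matching of $G$.

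For existence I would induct on $|V(G)|$, the empty graph giving $S=\emptyset$. Writing $q(H)$ for the number of odd components of a graph $H$, I would choose $S$ to maximize $q(G-S)-|S|$ and, among all sets attaining this maximum, to have $|S|$ as large as possible. Three things then need checking. First, every component of $G-S$ is odd: otherwise, deleting a single vertex of an even component produces at least one new odd component, yielding a set that still attains the maximum value of $q(\cdot)-|\cdot|$ but is strictly larger, contradicting the choice of $S$. Second, $S$ is matchable to $\mathcal{C}_{G-S}$: here I would verify Hall's condition on the bipartite graph $G_S$, noting that if some $S''\subseteq S$ had fewer than $|S''|$ neighbouring components, then $S\setminus S''$ would retain the $q(G-S)-|N(S'')|$ untouched odd components and strictly raise $q(\cdot)-|\cdot|$ above its maximum, which is impossible. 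Third, and most delicately, every component $C$ of $G-S$ must be shown to be factor-critical.

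The factor-criticality step is where I expect the genuine work to lie, and it is the only place the induction hypothesis enters. Assuming some $C-c$ has no perfect matching, I would apply the theorem inductively to the strictly smaller graph $C-c$ to obtain a set $S'$ satisfying (i) and (ii) there; the absence of a perfect matching together with the already-proven criterion gives $|S'|<|\mathcal{C}_{(C-c)-S'}|$, and a parity count (each factor-critical piece is odd while $|C-c|$ is even) sharpens this to $|S'|\leq|\mathcal{C}_{(C-c)-S'}|-2$. Then the enlarged set $T=S\cup\{c\}\cup S'$ can be shown to satisfy $q(G-T)-|T|\geq q(G-S)-|S|$ while $|T|>|S|$, once more contradicting the extremal choice of $S$. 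Keeping the bookkeeping of odd components straight across these three extremal arguments, and in particular reconciling the parity slack of $2$ with the single extra vertex $c$ that is deleted, is the main technical obstacle; once it is handled, all three properties hold for the chosen $S$ and the induction closes.
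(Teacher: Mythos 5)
Your proposal is correct: the paper itself gives no proof of this theorem but cites it from Diestel's \emph{Graph Theory} (reference \cite{5}), and your argument --- proving the matching criterion directly from (i) and (ii), then establishing existence by induction with $S$ chosen to maximize $q(G-S)-|S|$ and secondarily $|S|$, handling oddness, Hall's condition, and factor-criticality via the parity-slack-of-two contradiction --- is precisely the standard proof given there. Nothing is missing; the bookkeeping you flag as delicate works out exactly as you describe (the slack of $2$ absorbs the loss of the component $C$ and the added vertex $c$).
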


\section{\normalsize Main results}\setcounter{equation}{0}

As we known,  all (4,6)-fullerenes with cyclical edge-connectivity 4 and all (5,6)-fullerenes are 2-extendable \cite{2,3}.
By Lemma \ref{2-extendable of T_n}, we know $F\in\mathcal{T}$ is non-2-extendable. But for a (4,5,6)-fullerene which contains not only quadrilaterals but also pentagons, it may be non-2-extendable.
For instance, (4,5,6)-fullerenes $F_{12}, F_{14}, F_{18}$ and $F_{20}$ shown in Fig. \ref{F_{12}} each  has no perfect matching containing  the two bold edges.
\begin{figure}[!htbp]
\begin{center}
\includegraphics[totalheight=27mm]{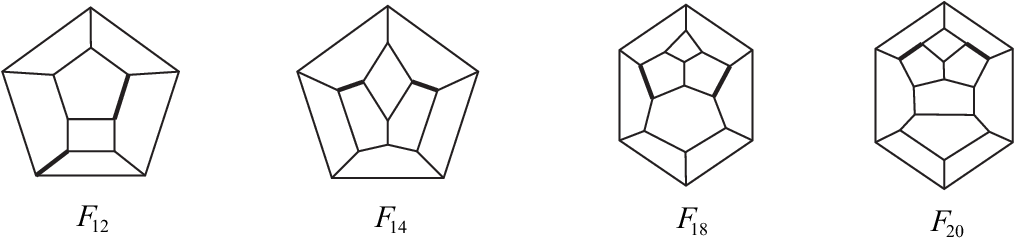}
 \caption{\label{F_{12}}\small{(4,5,6)-fullerenes $F_{12}, F_{14}, F_{18}$ and $F_{20}$  are non-2-extendable.}}
\end{center}
\end{figure}

We characterize all 2-extendable (4,5,6)-fullerene graphs by using the method of  forbidden subgraphs. Hence, we firstly define 29 forbidden subgraphs $H_i$($1\leqslant i\leqslant 29$) and 93 classes of forbidden subgraphs
$H_i$($30\leqslant i\leqslant 122$)    as in Figs. \ref{subs} and \ref{subs1}. For $30\leqslant i\leqslant 122$,  we simply use $H_i$ instead of $H_i^n$ if we do not specially point out how many concentric layers of hexagons $H_i$ contains.
For instance, the subgraphs $H_{30}$ contains  three quadrilaterals,  one pentagon and each layer contains exactly five hexagons.
In particular, the former 21 subgraphs have been already defined by the two present authors \cite{9}.
For each $22\leqslant i\leqslant122$, we can see that there is at least one quadrilateral and one pentagon in $H_i$.
Let $W$ and $B$ be the sets of white vertices and black vertices in $H_i$($22\leqslant i\leqslant122$) respectively.
Then, we have $|W|=|B|+3$, i.e., $V(H_i)$ is odd. Further, $B$ is a vertex independent set of $H_i$ and every vertex of $B$ is of degree 3 in $H_i$.

Let $F$ be a (4,5,6)-fullerene. Denote $p_4$ and $p_5$ the number of quadrilaterals and pentagons of $F$ respectively. Then, $2p_4+p_5=12$ \cite{8}.  Let
$$
\begin{array}{llll}
  \mathscr{F}_1=\{F\mid p_5=4 \text{ and } H_1,  H_3,  H_4,  H_{17},  H_{18},  H_{19} \text{ or }  H_{20}\subseteq F  \},  \\[5pt]
  \mathscr{F}_2=\{F\mid p_5=2 \text{ and }H_2 \text{ or }  H_{21} \subseteq F \}, \\[5pt]
  \mathscr{F}_3=\{F\mid  p_5=6 \text{ and } H_i \subseteq F \text{ for some } 5\leqslant i\leqslant 16\}, \\[5pt]
  \mathscr{F}_4=\{F\mid  H_i \subseteq F \text{ for some } 22\leqslant i\leqslant 122 \}.
\end{array}
$$
 Meanwhile, $2\leqslant p_5\leqslant10$ for $F\in \mathscr{F}_4$. Clearly, all pentagons of $F$ are only contained in $H_k$ if $F\in \cup_{i=1}^3\mathscr{F}_i$ and $H_k\subseteq F$, where $1\leqslant k\leqslant21$.

An \textit{anti-Kekul\'{e} set}  of a connected graph $G$ is an edge subset whose deletion leads to the subgraph being connected but having no perfect matchings.
The smallest cardinality of  anti-Kekul\'{e} sets of $G$ is called the \textit{anti-Kekul\'{e} number} of $G$, denoted by $ak(G)$,which was introduced by Vuki\v{c}evi\'{c} and Trinajsti\'{c} \cite{def anti-ke}.
The two present authors gave the following result.
\begin{thm}[\cite{9}]\label{2020-ak}
Any (4,5,6)-fullerene $F$ has anti-Kekul\'{e} number 3 or 4. Further, $ak(F)=3$ if and only if $F\in\cup^3_{i=1}\mathscr{F}_i\cup \{F_{12},F_{14},F_{18}, F_{20}\}$.
\end{thm}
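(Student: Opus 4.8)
The plan is to prove the two assertions in the order: (a) $ak(F)\geq 3$ for every $F$; (b) a structural characterization of when a size-$3$ anti-Kekul\'e set exists, matched to the list $\cup_{i=1}^3\mathscr{F}_i\cup\{F_{12},F_{14},F_{18},F_{20}\}$; and (c) $ak(F)\leq 4$ for every $F$, so that $ak(F)\in\{3,4\}$. For (a) I would argue uniformly by counting. Suppose $S\subseteq E(F)$ is an anti-Kekul\'e set with $|S|=k$, so $F-S$ is connected with no perfect matching. Applying Theorem \ref{th1} to $F-S$ gives a set $X$ whose $(F-S)-X$ components $C_1,\dots,C_q$ are factor-critical, hence odd, with $q>|X|$; since $F-S$ is connected of even order, $X\neq\varnothing$ and parity forces $q\geq|X|+2$. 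In $F-S$ there is no edge between distinct $C_i$, so each edge of $F$ leaving $C_i$ either goes to $X$ ($a_i$ of them) or lies in $S$ ($s_i$ of them); because $F$ is $3$-edge-connected (Lemma \ref{lem1} gives connectivity $3$, whence $\lambda(F)\geq 3$) and $|C_i|$ is odd, $a_i+s_i=|[C_i,\overline{C_i}]|\geq 3$. Summing over the $q$ odd components and using $\sum_i a_i\leq 3|X|$ and $\sum_i s_i\leq 2k$ gives $3|X|+2k\geq 3q\geq 3|X|+6$, i.e.\ $k\geq 3$. Hence $ak(F)\geq 3$.

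For (b) I would reread the same inequality at $k=3$, where it becomes an equality chain. This forces $q=|X|+2$, every $C_i$ to meet the rest of $F$ in exactly three edges, the three edges of $S$ to join odd components, and every edge incident with $X$ to run into an odd component. If $F\notin\mathcal{T}$, Lemma \ref{lem1} makes each of these $3$-edge-cuts trivial, and since $q\geq 3$ each $C_i$ must be a single vertex; a short check rules out even components, so $V(F)=X\cup B$ where $B$ is an independent set of $F-S$ consisting of the $|X|+2$ singletons, $X$ is independent in $F$, and $S$ is exactly the set of three edges lying inside $B$. In other words $F$ is bipartite except for three edges and carries the precise imbalance $|B|=|X|+2$. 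This rigid pattern around the three special edges and the non-hexagonal faces is what the subgraphs $H_1,\dots,H_{21}$ record: each has a degree-$3$ independent (black) set $B$ and a (white) set $W$ with $|W|=|B|+3$, which is the certificate that removing three suitable edges produces the barrier above. Conversely, given $F\supseteq H_i$ I would delete the three distinguished edges of $H_i$ and verify---via Hall's condition on $W$ (equivalently Tutte on the resulting barrier) together with Lemma \ref{lem1} for connectivity---that $F-S$ is connected and has no perfect matching, so that $ak(F)=3$.

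The main obstacle is then the enumeration inside (b): to show that the abstract bipartition above occurs for precisely the fullerenes in $\cup_{i=1}^3\mathscr{F}_i$ together with the four small sporadic graphs. Here I would use that pentagons are the only odd faces and that $2p_4+p_5=12$: since $F$ becomes bipartite after deleting only three edges, the pentagons must cluster near those edges, which bounds their number and configuration. A case analysis on the position and number of pentagons should yield $p_5\in\{2,4,6\}$ and identify the defect region with one of $H_1,\dots,H_{21}$ (organized by $p_5=2,4,6$ into $\mathscr{F}_2,\mathscr{F}_1,\mathscr{F}_3$), the finitely many graphs too small to contain a clean $H_i$ being exactly $F_{12},F_{14},F_{18},F_{20}$, handled directly. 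This matching of the structural description to the explicit finite list is the laborious step.

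Finally, for $F\in\mathcal{T}$ the trivial-cut hypothesis of Lemma \ref{lem1} fails, so I would treat $\mathcal{T}$ separately: Lemma \ref{T_n pm} shows that deleting any three edges while keeping $T_n$ connected still leaves, in every layer, an available traversed edge, and any such layerwise choice extends to a perfect matching; hence no size-$3$ anti-Kekul\'e set exists and $ak(T_n)\geq 4$. For the upper bound (c), exhibiting a size-$4$ anti-Kekul\'e set near a non-hexagonal face (one exists because $2p_4+p_5=12>0$) gives $ak(F)\leq 4$ for all $F$. Combining (a)--(c): $ak(F)\in\{3,4\}$, and $ak(F)=3$ exactly for $F\in\cup_{i=1}^3\mathscr{F}_i\cup\{F_{12},F_{14},F_{18},F_{20}\}$.
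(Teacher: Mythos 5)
First, a point of comparison: the present paper does not prove this statement at all --- Theorem \ref{2020-ak} is imported, with citation, from the authors' earlier paper \cite{9}, and is in fact invoked as a black box (``Theorem 3.2 in Ref \cite{9}'') inside the proof of Theorem \ref{th2}. So there is no in-paper proof to measure yours against. Your counting framework is nonetheless sound and mirrors the machinery this paper does use: applying Theorem \ref{th1} to the deleted graph, bounding each odd component's edge-cut by $3$ via Lemma \ref{lem1}, summing to get $3q\leqslant 3|X|+2|S|$, and extracting at $|S|=3$ the rigid picture (all odd components singletons when $F\notin\mathcal{T}$, $X$ independent, the three deleted edges inside the larger colour class, imbalance exactly two) runs parallel to Ineqs.~(\ref{eq00})--(\ref{eq1}) and to the bipartite-imbalance argument in Lemma \ref{sufficiency}.

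However, as a proof of the theorem your proposal has three genuine gaps, and they sit exactly where the content lies. (1) The equivalence between that rigid structure and membership in $\cup_{i=1}^{3}\mathscr{F}_i\cup\{F_{12},F_{14},F_{18},F_{20}\}$ \emph{is} the theorem, and you only assert it (``a case analysis \ldots should yield''); pinning the pentagon configurations down to the 21 explicit subgraphs $H_1,\ldots,H_{21}$, sorted by $p_5\in\{2,4,6\}$, and isolating the four sporadic graphs is a long enumeration --- it is the bulk of \cite{9} --- and nothing in the sketch substitutes for it. Incidentally, the certificate you attach to $H_1,\ldots,H_{21}$ (an independent black set $B$ with $|W|=|B|+3$) is actually the paper's certificate for $H_{22},\ldots,H_{122}$, i.e., for $\mathscr{F}_4$, which is not part of this theorem; for $H_1,\ldots,H_{21}$ the correct certificate is the one your own derivation produced: bipartite after deleting three edges, with $|X|=|Y|+2$. (2) The upper bound $ak(F)\leqslant 4$ is not proved: ``a size-$4$ anti-Kekul\'e set near a non-hexagonal face'' is not a construction; one must name four edges and verify both that the complement stays connected and that it has no perfect matching. (3) Your treatment of $\mathcal{T}$ is incomplete: Lemma \ref{T_n pm} lets you pick one traversed edge per layer avoiding $S$, but the \emph{unique} perfect matching extending that choice may still use a non-traversed edge of $S$; you need an additional argument (e.g., varying the traversed choices in the layers adjacent to such an edge) before concluding $ak(T_n)\geqslant4$.
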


\begin{figure}[!htbp]
\begin{center}
\includegraphics[totalheight=220mm]{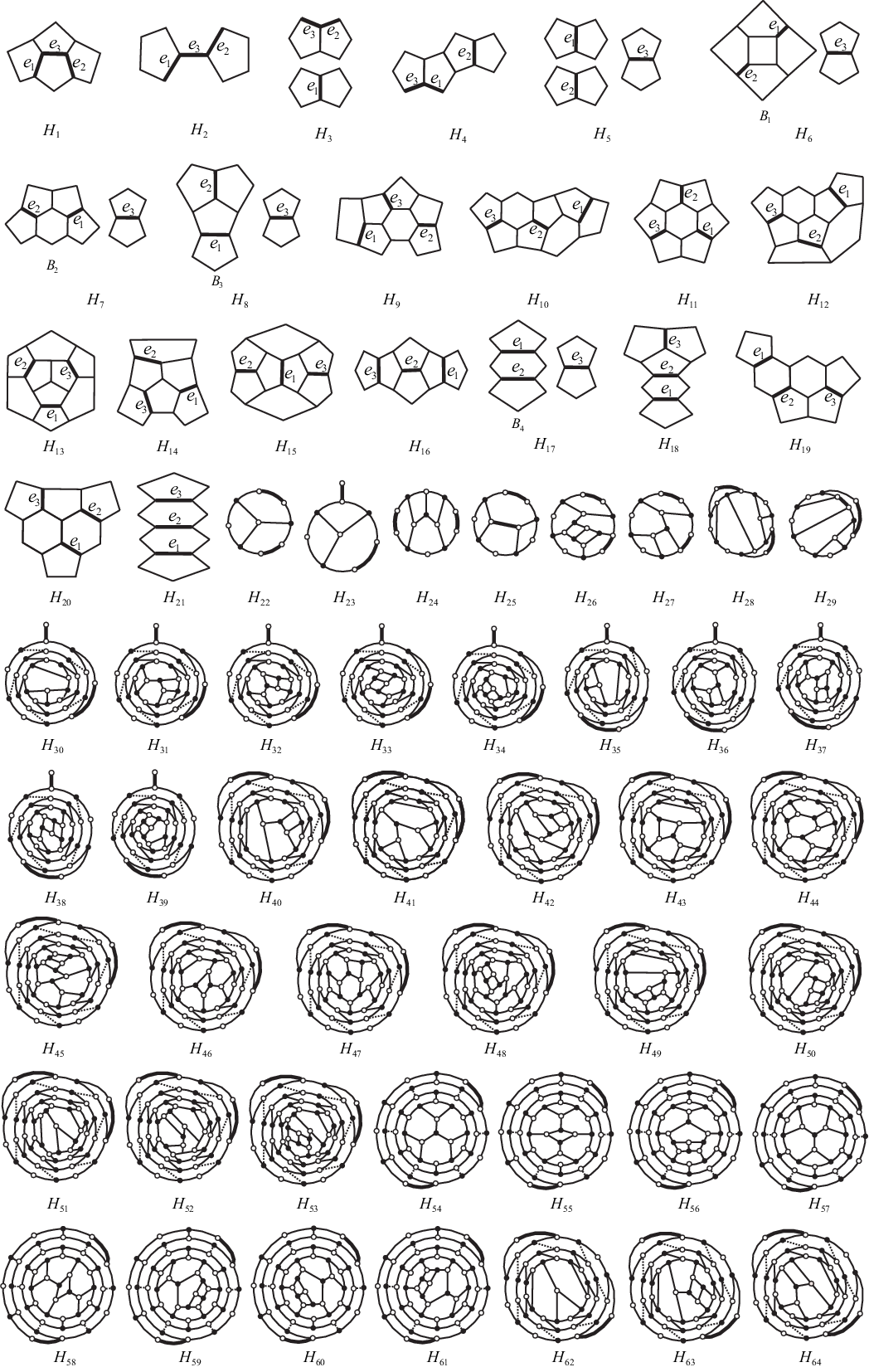}
%Rsubs.eps}
 \caption{\label{subs}\small{The possible subgraphs $H_i$($1\leqslant i\leqslant 64$). %and $H_i^m$($30\leqslant i\leqslant 64$).
 }}
\end{center}
\end{figure}

\begin{figure}[!htbp]
\begin{center}
\includegraphics[totalheight=210mm]{%Rsubs-2
Rsubs1.eps}
 \caption{\label{subs1}\small{The possible subgraphs %$H_i^m$,
 $H_i$, $65\leqslant i\leqslant 122$.}}
\end{center}
\end{figure}

\begin{lem}\label{sufficiency}
If $F\in \cup^4_{i=1}\mathscr{F}_i\cup \mathcal{T}\cup\{F_{12},F_{14},F_{18}, F_{20}\}$, then $F$ is non-2-extendable.
\end{lem}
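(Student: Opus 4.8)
The plan is to exhibit, for every graph $F$ in the list, a matching $\{e_1,e_2\}$ of size $2$ that lies in no perfect matching, which is exactly what non-$2$-extendability requires (recall $F$ is $1$-extendable, hence has a perfect matching and matchings of size $2$). To certify that $\{e_1,e_2\}$ is not extendable I would pass to $G'=F-V(\{e_1,e_2\})$, the graph left after deleting the four endpoints, and show $G'$ has no perfect matching: writing $o(\cdot)$ for the number of odd components, if some $S\subseteq V(G')$ has $o(G'-S)>|S|$ then at most $|S|$ of those odd components can be saturated through $S$, so $G'$ has no perfect matching (this is the elementary obstruction underlying Theorem~\ref{th1}), and therefore no perfect matching of $F$ contains both $e_1$ and $e_2$. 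The graphs in $\mathcal{T}$ are already handled by Lemma~\ref{2-extendable of T_n}, and $F_{12},F_{14},F_{18},F_{20}$ are fixed finite graphs, so for these I would simply take $\{e_1,e_2\}$ to be the two bold edges of Fig.~\ref{F_{12}} and verify the claim by the same $4$-vertex-deletion computation.

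The core of the argument is the family $\mathscr{F}_4$, where the white/black labelling does all the work. Assume $H_i\subseteq F$ with $22\leqslant i\leqslant 122$, and choose $e_1,e_2$ to be the two designated (bold) edges of $H_i$ in Figs.~\ref{subs}--\ref{subs1}, which by construction are disjoint and join white vertices; then deleting their four endpoints removes exactly four vertices of $W$. Set $G'=F-V(\{e_1,e_2\})$ and $S=W\setminus V(\{e_1,e_2\})$, so $|S|=|W|-4=|B|-1$. The crucial point is that every black vertex is isolated in $F-W$: since $F$ is cubic and each $b\in B$ already has degree $3$ inside $H_i$, all three neighbours of $b$ in $F$ lie in $V(H_i)$, and because $B$ is independent they all lie in $W$. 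Hence $G'-S=F-W$ contains the $|B|$ black vertices as singleton (odd) components, so
\[
o(G'-S)\ \geqslant\ |B|\ =\ |S|+1\ >\ |S|.
\]
Thus $G'$ has no perfect matching and $F$ is non-$2$-extendable. Note that the equality $|W|=|B|+3$ is exactly what makes this tight: removing only one edge ($2$ white vertices) leaves $|W|-2=|B|+1\geqslant o$, consistent with $F$ being $1$-extendable, while a second edge tips the count past the threshold.

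For $\mathscr{F}_1,\mathscr{F}_2,\mathscr{F}_3$ the relevant forbidden subgraph is one of $H_1,\dots,H_{21}$, which by the remark preceding the lemma contains all pentagons of $F$. I would treat these by the same template: select in each such $H_k$ two disjoint edges $e_1,e_2$ and an accompanying separating set $S$ so that the independent set of interior degree-$3$ vertices of the configuration becomes a collection of isolated odd components in $F-V(\{e_1,e_2\})-S$ outnumbering $|S|$, again using cubicity of $F$ to confine those neighbourhoods inside the subgraph. Since $H_1,\dots,H_{21}$ are precisely the configurations identified in \cite{9} as the carriers of anti-Kekul\'e number $3$, the same obstruction that forces $ak(F)=3$ is expected to block $2$-extension; combined with Theorem~\ref{2020-ak} this yields the announced consequence that every $F$ with $ak(F)=3$ is non-$2$-extendable.

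The step I expect to be the main obstacle is not any single computation but the \emph{uniformity} of this verification across all $122$ subgraph families: one must confirm family by family that the chosen pair of edges is a size-$2$ matching whose deleted endpoints contribute no odd component of their own, and that the residual independent set really becomes isolated in $F$ --- a conclusion that rests entirely on $F$ being cubic, so that degree $3$ \emph{inside} $H_i$ forces the whole neighbourhood to stay inside $H_i\subseteq F$. Once this structural bookkeeping is read off from the figures and \cite{9}, the inequality $o(G'-S)>|S|$ and the appeal to Theorem~\ref{th1} are identical in every case.
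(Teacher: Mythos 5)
Your treatment of $\mathcal{T}$, of the four sporadic graphs, and of $\mathscr{F}_4$ is correct and essentially identical to the paper's own proof: for $H_i$ with $22\leqslant i\leqslant 122$ the paper also deletes the four endvertices of the two bold edges, takes $S=W\setminus V(E_0)$, and uses cubicity together with the independence of $B$ to conclude that $F-W$ has at least $|B|=|S|+1$ isolated vertices as odd components, so Tutte's theorem blocks any perfect matching through the two edges.

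The gap is in your handling of $\mathscr{F}_1,\mathscr{F}_2,\mathscr{F}_3$. You propose to run ``the same template'' inside $H_k$ ($1\leqslant k\leqslant 21$), i.e.\ to find inside the configuration an independent set of interior degree-3 vertices that becomes a surplus of isolated odd components after deleting $V(\{e_1,e_2\})$ and a local barrier set $S$. But the white/black labelling with $|W|=|B|+3$ is only defined (and only asserted) for $H_{22},\ldots,H_{122}$; the configurations $H_1,\ldots,H_{21}$ carry \emph{three} bold edges, not two, and they contain the pentagons of $F$, so no such local deficiency structure can be read off the figures. The paper's actual argument for these families is global rather than local: by the proof of Lemma 3.2 in \cite{9}, deleting the three bold edges $E_k$ from $F$ leaves a \emph{connected bipartite} graph with bipartition $(X,Y)$; all six endvertices of $E_k$ lie in one class, say $X$, and 3-regularity gives $3|X|-6=3|Y|$, i.e.\ $|X|-2=|Y|$, whereas a perfect matching of $F$ containing $e_1,e_2$ would force $|X|-4=|Y|$ (if $e_3\notin M$) or $|X|-6=|Y|$ (if $e_3\in M$), a contradiction. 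Equivalently, the Tutte certificate you want does exist, but its barrier set is $X\setminus V(\{e_1,e_2\})$ and its isolated odd components are all of $Y$: both sets sprawl over the whole graph, and the independence of $Y$ in $F$ rests on the global bipartiteness of $F-E_k$ (that is, on the fact that the three bold edges meet every odd face of $F$), which cannot be obtained by ``confining neighbourhoods inside the subgraph'' via cubicity as you suggest. Without importing this result from \cite{9} (or reproving it), your argument does not cover $\mathscr{F}_1\cup\mathscr{F}_2\cup\mathscr{F}_3$.
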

\begin{proof}
From the beginning of this section and by Theorem \ref{2-extendable of T_n},
we know that $F$ is non-2-extendable for $F\in \mathcal{T}\cup\{F_{12},F_{14},F_{18}, F_{20}\}$.
For $F\in \cup^3_{i=1}\mathscr{F}_i$, $H_k\subseteq F$ for some $1\leqslant k\leqslant21$. Let $E_k$ be the set of three bold edges $e_1,e_2,e_3$ of $H_k$($1\leqslant k\leqslant21$) in Fig. \ref{subs}. Let $F_k'=F-E_k$.
From the proof of Lemma 3.2 in Ref \cite{9},
we have that $F_k'$ is a connected bipartite graph. Let $(X, Y)$ be a bipartition of $F_k'$, and every edge of $E_k$ has its endvertices in the same partite set of $F_k'$. We claim that $V(E_k)\subseteq X$ or $Y$.
If not all vertices of $V(E_k)$ belong to the same partite set of $F_k'$, then we have $3|X|-2=3|Y|-4$ or $3|X|-4=3|Y|-2$, i.e., $3||X|-|Y||=2$, a contradiction. The claim holds.
Without loss of generality, we may assume $V(E_k)\subseteq X$. Thus,  $3|X|-6=3|Y|$. %, i.e., $|X|-2=|Y|$.
  But if there is a perfect matching of $F$ containing $\{e_1,e_2\}$, then it must have the equality $|X|-4=|Y|$ or $|X|-6=|Y|$, which contradicts the fact $|X|-2=|Y|$. Hence,  $F\in \cup^3_{i=1}\mathscr{F}_i$ is non-2-extendable.

Let $F\in \mathscr{F}_4$. Then there is a $22\leqslant k\leqslant122$ such that $H_k\subseteq F$.
Thus,
we have $|W|=|B|+3$, where $W$ and $B$ are the sets of white vertices and black vertices  in $H_k$ respectively.
Let $E_k$ be the set of two bold edges in $H_k$. % or $H_k^n$.
Clearly, $V(E_k)\subseteq W$. Denote $F_k'=F-V(E_k)$ and $W'=W\setminus V(E_k)$.
Since
each vertex in $B$ has three neighbours in $W$, $F-W=F_k'-W'$ has at least $|B|=|W'|+1$ isolated vertices as odd components. By Tutte's Theorem, $F_k'$ has no perfect matching, that is, $F$ has no perfect matching containing $E_k$. Hence, each $F\in \mathscr{F}_4$ is non-2-extendable.
\end{proof}

Next, we will prove that the converse of Lemma \ref{sufficiency} is also true. Then, we can get the main result of this article as follows.

 A subgraph $H$ of a (4,5,6)-fullerene $F$ is called a \textit{patch} if its boundary is a cycle with only 2-degree or 3-degree vertices while other vertices (called \textit{internal vertices}) have 3-degree and every inner face of $H$ is also a face of $F$.

\begin{thm}\label{th2}
Let $F$ be a (4,5,6)-fullerene. Then $F$ is non-2-extendable if and only if $F\in \cup^4_{i=1}\mathscr{F}_i\cup \mathcal{T}\cup\{F_{12},F_{14},F_{18}, F_{20}\}$.
\end{thm}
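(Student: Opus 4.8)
The statement to prove is exactly the converse of Lemma \ref{sufficiency}; since that lemma together with Lemma \ref{2-extendable of T_n} already gives the ``if'' direction, the plan is to show that every non-$2$-extendable $(4,5,6)$-fullerene $F$ lies in $\cup_{i=1}^{4}\mathscr{F}_i\cup\mathcal{T}\cup\{F_{12},F_{14},F_{18},F_{20}\}$. If $F\in\mathcal{T}$ we are done, so I would assume throughout that $F\notin\mathcal{T}$; then by Lemma \ref{huan} $F$ is cyclically $4$-edge connected, and by Lemma \ref{lem1} every $3$-edge cut of $F$ is trivial. Since a $(4,5,6)$-fullerene is $1$-extendable and cubic (hence $3$-edge connected with a perfect matching), non-$2$-extendability is equivalent to the existence of two independent edges $e_1=u_1v_1$ and $e_2=u_2v_2$ whose matching $\{e_1,e_2\}$ lies in no perfect matching, i.e.\ the even-order graph $G:=F-\{u_1,v_1,u_2,v_2\}$ has no perfect matching.

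The core of the argument is to extract a Tutte-type barrier from $G$ and read off a local configuration. Applying Theorem \ref{th1} to $G$ yields a set $S$ matchable to $\mathcal{C}_{G-S}$ with every component of $G-S$ factor-critical; because $G$ has no perfect matching, $|S|<|\mathcal{C}_{G-S}|$, and as $|V(G)|=|V(F)|-4$ is even a parity count forces $|\mathcal{C}_{G-S}|\geqslant|S|+2$. Set $\hat S:=S\cup\{u_1,v_1,u_2,v_2\}$ and $t:=|\hat S|=|S|+4$. Since $G-S=F-\hat S$, all components of $F-\hat S$ are odd and factor-critical, so writing $o(H)$ for the number of odd components of $H$ we obtain $o(F-\hat S)=|\mathcal{C}_{G-S}|\geqslant t-2$; on the other hand $F$ has a perfect matching, so Tutte's condition gives $o(F-\hat S)\leqslant t$, and parity yields $o(F-\hat S)\in\{t-2,t\}$.

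Next I would count the cut $[\hat S,\overline{\hat S}]$, which equals the set of edges joining $\hat S$ to the components of $F-\hat S$. In the cubic graph $F$ a component $C$ satisfies $|[C,\overline C]|\equiv|C|\pmod 2$, so an odd component sends an odd number $\geqslant 3$ of edges to $\hat S$; moreover, since every $3$-edge cut is trivial and $|\hat S|\geqslant 2$, any component sending exactly $3$ edges is a single vertex, so a component with at least $3$ vertices sends at least $5$ edges. Comparing $|[\hat S,\overline{\hat S}]|=3t-2|E(F[\hat S])|$ with this lower bound, the case $o(F-\hat S)=t$ forces $\hat S$ independent with all components single vertices, so $F$ is bipartite, hence a $(4,6)$-fullerene, which for $F\notin\mathcal{T}$ is $2$-extendable --- a contradiction; thus $o(F-\hat S)=t-2$, and the same comparison gives the sharp bound $c_{\geqslant 3}+|E(F[\hat S])|\leqslant 3$, where $c_{\geqslant 3}$ is the number of components of order $\geqslant 3$. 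Consequently $\hat S$ is an almost independent white set $W$, the single-vertex components form an independent black set $B$ each of whose three neighbours lies in $W$, and after separating off the single large factor-critical ``bulk'' component one gets $|W|=|B|+3$, precisely the parameters attached to the forbidden patches.

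It remains to convert this combinatorial data into an honest subgraph. The set $W$, the black vertices $B$, the edges $e_1,e_2$ (together with a third distinguished edge in the cases $p_5\in\{2,4,6\}$), and all faces of $F$ incident with them form a patch whose inner faces are faces of $F$. Using that every face is a $4$-, $5$- or $6$-gon, that short cycles are facial (Lemma \ref{lem2}), that $2p_4+p_5=12$, and planarity, I would enumerate the finitely many ways $B$ can be arranged around $W$. The hard part, and the bulk of the work, is exactly this enumeration: showing that the almost-bipartite barrier forced above must realize one of the $29$ patches $H_1,\dots,H_{29}$ or one of the $93$ layered families $H_{30},\dots,H_{122}$ (so that $F\in\cup_{i=1}^{4}\mathscr{F}_i$), or else closes up into one of the four sporadic fullerenes $F_{12},F_{14},F_{18},F_{20}$. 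Matching the pentagon count captured by the patch to the ranges defining $\mathscr{F}_1,\mathscr{F}_2,\mathscr{F}_3$ (with $p_5=4,2,6$) and $\mathscr{F}_4$ (with $2\leqslant p_5\leqslant 10$), fixing the exact number of hexagon layers in each $H_i^{n}$, and verifying that the slack $c_{\geqslant 3}+|E(F[\hat S])|\leqslant 3$ admits no configurations beyond those listed, are the delicate points where the face-size restrictions and planarity must be pushed hardest.
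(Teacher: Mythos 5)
Your barrier extraction is essentially the paper's own first step: the paper also applies Theorem \ref{th1} to $F_0=F-V(E_0)$, counts edges against the trivial-3-edge-cut property of Lemma \ref{lem1}, rules out $|\mathcal{C}_{F_0-X_0}|\geqslant|X_0|+4$, and lands on exactly your dichotomy, namely $|E(F[\hat S])|\in\{2,3\}$ with at most one non-singleton component (your bound $c_{\geqslant 3}+|E(F[\hat S])|\leqslant 3$ is the paper's Ineq.\ (3.2)). Two remarks on that part: your elimination of $o(F-\hat S)=t$ is needlessly roundabout --- the count forces $E(F[\hat S])=\emptyset$, which already contradicts $e_1,e_2\in E(F[\hat S])$, so there is no need to pass through bipartiteness and 2-extendability of $(4,6)$-fullerenes; and in the case $|E(F[\hat S])|=2$ you should note that $c_{\geqslant 3}=0$ is impossible by a divisibility-by-3 count, so the ``bulk'' component genuinely exists.

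The genuine gap is everything after that. You write that ``the hard part, and the bulk of the work, is exactly this enumeration'' and then do not carry it out; but that enumeration \emph{is} the proof --- in the paper it occupies the case analysis of Facts 1--3 and all of Sections 4--6, where the five 2-degree boundary vertices of the patch $G_1$, the faces $f_1,\dots,f_5$, and the layer-growing argument are used to pin down one of $H_{22},\dots,H_{122}$ inside $F-G_1$. A proposal that stops at ``enumerate the finitely many ways $B$ can be arranged around $W$'' has not shown that the list of forbidden configurations is complete, which is precisely the content of the theorem. You also miss the paper's one genuine shortcut: in the case $|E(F[\hat S])|=3$ all components are singletons, so $F-E_0'$ (deleting the three edges inside $\hat S$) is connected, bipartite and unbalanced; hence $E_0'$ is an anti-Kekul\'e set of size 3, and Theorem \ref{2020-ak} (the authors' earlier characterization of $ak(F)=3$) immediately gives $F\in\cup_{i=1}^3\mathscr{F}_i\cup\{F_{12},F_{14},F_{18},F_{20}\}$ with no new enumeration at all. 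Your plan instead proposes to re-enumerate those configurations by hand alongside the $\mathscr{F}_4$ ones, which both enlarges the missing work and obscures why the sporadic fullerenes and the classes with $p_5\in\{2,4,6\}$ appear in the answer.
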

\begin{proof}
 By Lemma \ref{sufficiency}, it suffices to show  the necessity.
Since $F\in \mathcal{T}$ is non-2-extendable by Lemma \ref{2-extendable of T_n}, we consider $F\notin \mathcal{T}$ in the sequel.

Suppose $F$ is non-2-extendable. Then there exists a matching  $E_0=\{e_1, e_2\}$ of size 2 such that $F-V(E_0)$ has no perfect matching.
 Let $e_1=u_1u_2$, $e_2=u_3u_4$ and $F_0=F-V(E_0)$.  Since $F_0$ has no perfect matching, $F_0$ has a subset $X_0$ satisfying the two properties of Theorem \ref{th1} and $|\mathcal{C}_{F_0-X_0}|\geqslant |X_0|+1$.
 Moreover, since $|\mathcal{C}_{F_0-X_0}|$ and $|X_0|$ have the same parity, % by Theorem \ref{th1}(ii) and $|V(F)|$ is even.  Hence,
 $|\mathcal{C}_{F_0-X_0}|\geqslant |X_0|+2$. Let $G_i$ be the components of $F_0-X_0$,
$m_i$ the number of edges in $F$ between $G_i$ and $X_0$, $r_0$ the number of edges in $F$ between $V(E_0)$ and $X_0$, and $r_i$ the number of edges in $F$ between $V(E_0)$ and $G_i$, $i=1,2, \ldots, |\mathcal{C}_{F_0-X_0}|$. Then, $m_i+r_i\geqslant3$ by Lemma %\ref{3-connectivity}
\ref{lem1},  $1\leqslant i\leqslant|\mathcal{C}_{F_0-X_0}|$.
Therefore,
 \begin{equation}
  \label{eq00}
\begin{aligned}
3|\mathcal{C}_{F_0-X_0}|&\leqslant \sum_{i=1}^{|\mathcal{C}_{F_0-X_0}|}(m_i+r_i)=3|X_0|+12-2|E(F[X_0])|-2|E(F[V(E_0)])|-2r_0
\end{aligned}
\end{equation}
 If $|\mathcal{C}_{F_0-X_0}|\geqslant|X_0|+4$, then Ineq. (\ref{eq00}) implies $|E(F[X_0])|+|E(F[V(E_0)])|+r_0\leqslant0$, a contradiction to $|E_0|=2$. Hence, $|\mathcal{C}_{F_0-X_0}|=|X_0|+2$, and Ineq. (\ref{eq00}) implies
 \begin{equation}
  \label{eq1}
\begin{aligned}
0&\leqslant 6-2|E(F[X_0])|-2|E(F[V(E_0)])|-2r_0.
\end{aligned}
\end{equation}
Obviously, $2\leqslant|E(F[X_0])|+|E(F[V(E_0)])|+r_0\leqslant3$.

If $|E(F[X_0])|+|E(F[V(E_0)])|+r_0=3$, then %the equality in Ineq. \ref{eq1} always holds.
$m_i+r_i=3$, i.e., $|V(G_i)|=1$ by Lemma %\ref{trivial edge cut}
 \ref{lem1}, where $1\leqslant i\leqslant|\mathcal{C}_{F_0-X_0}|$. Denote $Y_0$ the set of all singletons $y_i$ from each $G_i$ ($1\leqslant i\leqslant|\mathcal{C}_{F_0-X_0}|$) and $E_0'$ the set of the three edges in $F[V(E_0)\cup X_0]$. Then, $F-E_0'=(X_0\cup V(E_0), Y_0)$ is bipartite with $|X_0\cup V(E_0)|=|X_0|+4>|Y_0|$. So, $F-E_0'$ has no perfect matching and is connected by Lemma %\ref{3-connectivity}
 \ref{lem1}. Hence, $E_0'$ is an anti-Kekul\'e set of $F$ and we can get $F\in\cup_{i=1}^3\mathscr{F}_i\cup\{F_{12},F_{14},F_{18},F_{20}\}$  by Theorem 3.2 in Ref \cite{9}.

If $|E(F[X_0])|+|E(F[V(E_0)])|+r_0=2$, then  $|E(F[X_0])|=r_0=0$, $|E(F[X_0])|=2$ and the inequality in Ineq. (\ref{eq1}) is strict. Since $|V(G_i)|$ is odd, which means $m_i+r_i$ is odd, there exists an $i_0$ such that $m_{i_0}+r_{i_0}=5$ and $m_i+r_i=3$ for any $i\neq i_0$. Without loss of generality, assume $i_0=1$. Thus, $|V(G_1)|\geqslant3$ and $|V(G_i)|=1$ for $2\leqslant i\leqslant |\mathcal{C}_{F_0-X_0}|$. Let $Y_0$ denote the set of all singletons $y_i$ from each $G_i$, $2\leqslant i\leqslant |\mathcal{C}_{F_0-X_0}|$. Note that the graph $F_0'$ which is obtained from $F-E_0$ by contracting the subgraph $G_1$ to a single vertex $G_1'$ is bipartite while $(X_0\cup V(E_0), Y_0\cup\{G_1'\})$ is a bipartition of $F_0'$.
For the sake of clarity,  we color the vertices %of $F$ with
$X_0\cup V(E_0)$ in white and $Y_0\cup V(G_1)$ in black. Moreover, for the analysis more convenience, we remark the vertices of $A_0=X_0\cup V(E_0)$ by $a_i$, $1\leqslant i\leqslant|X_0|+4$. Thus, $|E(F[A_0])|=|E_0|=2$.

Since $G_1$ is factor-critical, $G_1$ is 2-edge connected. Let $C_1$ be the boundary of the face of $G_1$ but not a face of $F$. Then, $C_1$ is a cycle.
%In fact, $G_1$ is a patch of $F$ by Lemma \ref{lem1}.
Since $m_1+r_1=5$, $G_1$ is a patch of $F$ by Lemma %\ref{3-connectivity}
 \ref{lem1} and there are exactly five 2-degree vertices of $G_1$ which lie on the cycle $C_1$. Without loss of generality, let $b_i$  be the five 2-degree vertices on $C_1$ in the clockwise direction and $a_i\in A_0$ the neighbor of $b_i$, $1\leqslant i\leqslant5$.
 By Lemma %\ref{3-connectivity}
 \ref{lem1}, $F-G_1$ has no isolated vertex. We claim $F-G_1$ has at most one pendent vertex. Otherwise, $F-G_1$ will have two pendent vertices and each pendent vertex of $F-G_1$ is adjacent to two successive vertices of $\{b_1,\ldots, b_5\}$
 by Lemma %\ref{3-connectivity}
 \ref{lem1}.  Without loss of generality, assume $a_1=a_2$ and $a_3=a_4$, i.e., $a_1(a_2)$ and $a_3(a_4)$ are two pendent vertices of $F-G_1$. Then, by Lemma %\ref{trivial edge cut}
 \ref{lem1}, we have $\{a_1a_5,a_3a_5\}\subseteq E(F)$. That is, $E_0=\{a_1a_5,a_3a_5\}$, a contradiction as $E_0$ is a matching of $F$. Our claim holds. Further, we have the following result:

\begin{observation}\label{claim2}
\emph{ Let $f$ be a face of $F-G_1$ and $F$. Then (i) if $|E(f)\cap E_0|=0$, then $f$ is an even face; (ii) if $|E(f)\cap E_0|=1$, then $f$ is a pentagon; and (iii) if $|E(f)\cap E_0|=2$, then $f$ is a hexagon.}
\end{observation}

For $1\leqslant i\leqslant5$, let $f_i$ be the face of $F$ containing edge set $\{a_ib_i,a_{i+1}b_{i+1}\}$, see Fig. \ref{figf_i}, where $a_6=a_1$ and $b_6=b_1$. We claim any two faces of $f_1,f_2,f_3,f_4$ and $f_5$ are different.
By Lemma
\ref{lem1}, any two successive faces are different. If $f_1=f_3$, then, by the restriction on the faces of $F$, $a_1=a_4$ and $a_2=a_3$, which means $a_1$ and $a_2$ are two pendent vertices of $F-G_1$, a contradiction as $F-G_1$ has at most one pendent vertex. %Similarly, we can also get $f_i$ and $f_j$ which the subscripts differ by 1 are different,
Similarly, we can also get $f_i$ and $f_j$ which are apaced by a face are different, $1\leqslant i,j\leqslant5$. Our claim holds.

By the restriction on the faces of $F$, $|E(f_i)\cap E(F-G_1)|\leqslant3$. Further, since $F_0'$ is bipartite,  $|E(f_i)\cap E(F-G_1)|=0$ or 2 if $E(f_i)\cap E_0=\emptyset$, while  $|E(f_i)\cap E(F-G_1)|=1$ or 3 if $E(f_i)\cap E_0\neq\emptyset$.
 Let $|E(f_1)\cap E(F-G_1)|\leqslant |E(f_i)\cap E(F-G_1)|$ for $2\leqslant i\leqslant5$. Thus, $0\leqslant|E(f_1)\cap E(F-G_1)|\leqslant2$ as $|E_0|=2$. Next, we consider the size of $E(f_1)\cap E(F-G_1)$ to obtain the following facts, i.e, $F\in \mathscr{F}_4$, which complete the entire proof.

\textbf{Fact 1.} If $|E(f_1)\cap E(F-G_1)|=0$, then there is a subgraph $H\in \cup_{i=30}^{39}\mathcal{H}_i\cup\{H_{22}, H_{23}, H_{24}\}$ such that $H\subseteq F-G_1$.

\textbf{Fact 2.} If $|E(f_1)\cap E(F-G_1)|=1$, then there is a subgraph $H\in\{H_{22}, H_{23}, H_{25}, H_{26},H_{27}\}$ such that $H\subseteq F-G_1$.

\textbf{Fact 3.} If $|E(f_1)\cap E(F-G_1)|=2$, then there is a subgraph $H\in \cup_{i=40}^{122}\mathcal{H}_i\cup\{H_{22}, H_{25}, H_{28},H_{29}\}$ such that $H\subseteq F-G_1$.

The proofs of such three facts will be given in next three sections.
\end{proof}

Combining with Theorems \ref{2020-ak} and \ref{th2}, the following result obviously holds.

\begin{cor}\label{last}
Every (4,5,6)-fullerene with anti-Kekul\'{e} number 3 is non-2-extendable.
\end{cor}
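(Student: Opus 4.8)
The plan is to obtain the corollary by simply composing the two structural characterizations already in hand, so that no fresh combinatorial argument is required. First I would fix an arbitrary (4,5,6)-fullerene $F$ with $ak(F)=3$ and feed it into the ``only if'' direction of Theorem \ref{2020-ak}, which lists exactly the fullerenes of anti-Kekul\'{e} number $3$; this immediately yields the membership $F\in\cup^3_{i=1}\mathscr{F}_i\cup \{F_{12},F_{14},F_{18}, F_{20}\}$.

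The one observation that dovetails the two theorems is the trivial set inclusion $\cup^3_{i=1}\mathscr{F}_i\subseteq\cup^4_{i=1}\mathscr{F}_i$, whence
$$\cup^3_{i=1}\mathscr{F}_i\cup \{F_{12},F_{14},F_{18}, F_{20}\}\ \subseteq\ \cup^4_{i=1}\mathscr{F}_i\cup \mathcal{T}\cup\{F_{12},F_{14},F_{18}, F_{20}\}.$$
Thus $F$ belongs to the family appearing on the right-hand side of the equivalence in Theorem \ref{th2}. I would then invoke the sufficiency direction of that theorem --- which is exactly Lemma \ref{sufficiency} --- to conclude that $F$ is non-2-extendable, as required.

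I expect no genuine obstacle in the argument itself: all of the work is already carried by Theorems \ref{2020-ak} and \ref{th2}, and the corollary follows from one inclusion plus a substitution. The only thing worth verifying is that the classes $\mathscr{F}_i$ and the four sporadic graphs $F_{12},F_{14},F_{18},F_{20}$ are used with identical conventions in both theorems; this holds since the relevant definitions are inherited verbatim from the same reference. The content of the statement is therefore conceptual rather than technical: it records that the fullerenes with $ak(F)=3$ sit entirely inside the non-2-extendable family, exposing the \emph{a priori} unexpected tie between the anti-Kekul\'{e} number and matching 2-extendability that the abstract flags as surprising.
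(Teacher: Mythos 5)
Your proposal is correct and is essentially identical to the paper's own argument: the paper dispatches this corollary in one line by ``combining Theorems \ref{2020-ak} and \ref{th2}'', which is exactly the composition you carry out --- the ``only if'' direction of Theorem \ref{2020-ak}, the trivial inclusion $\cup^3_{i=1}\mathscr{F}_i\cup\{F_{12},F_{14},F_{18},F_{20}\}\subseteq\cup^4_{i=1}\mathscr{F}_i\cup\mathcal{T}\cup\{F_{12},F_{14},F_{18},F_{20}\}$, and the sufficiency direction of Theorem \ref{th2} (Lemma \ref{sufficiency}). Your version merely makes explicit the steps the paper leaves implicit, so there is nothing further to check.
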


\begin{cor}[\cite{9}]\label{ak-any}
There is a (4,5,6)-fullerene with $n$ vertices having anti-Kekul\'e number 3 for any even $n\geqslant10$.
\end{cor}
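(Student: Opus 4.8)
The plan is to derive Corollary \ref{ak-any} from the characterization in Theorem \ref{2020-ak} by an explicit construction. Since that theorem says $ak(F)=3$ exactly when $F\in\cup_{i=1}^3\mathscr{F}_i\cup\{F_{12},F_{14},F_{18},F_{20}\}$, it suffices to produce, for every even $n\geqslant 10$, a single (4,5,6)-fullerene on $n$ vertices lying in $\cup_{i=1}^3\mathscr{F}_i$. The key bookkeeping tool is the face count: for any (4,5,6)-fullerene, Euler's formula together with cubicity gives $|V(F)|=2(p_4+p_5+p_6)-4$, and combined with $2p_4+p_5=12$ this shows that once $(p_4,p_5)$ is fixed, $|V(F)|$ increases by exactly $2$ each time $p_6$ increases by $1$. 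In particular, choosing the profile of $\mathscr{F}_2$, namely $p_5=2$ and $p_4=5$, yields $|V(F)|=10+2p_6$, so letting $p_6=0,1,2,\dots$ hits precisely the even integers $10,12,14,\dots$.

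First I would fix the base graph with $p_6=0$: the pentagonal prism, which has $(p_4,p_5,p_6)=(5,2,0)$ and $n=10$, is the natural candidate, and one checks directly (as in the proof of Lemma \ref{sufficiency}) that it carries the pentagon configuration $H_{21}$ (or $H_2$), placing it in $\mathscr{F}_2$. Then I would grow this graph one hexagon at a time by a local ring-insertion operation (a $C_2$-type insertion) carried out in a hexagonal region away from the two pentagons: each step adds $2$ vertices and one hexagonal face while leaving $p_4=5$ and $p_5=2$ unchanged, and, being performed far from the pentagons, it leaves the copy of $H_{21}$ intact. Thus every graph in the resulting family still contains $H_{21}$ with $p_5=2$, hence belongs to $\mathscr{F}_2$, and by Theorem \ref{2020-ak} has anti-Kekul\'e number $3$. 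I would also verify at each step that the graph remains a genuine (4,5,6)-fullerene, i.e. $3$-connected with all faces of size $4,5,6$ and no non-facial short cycles in the sense of Lemma \ref{lem2}; since the inserted faces are hexagons glued into an existing hexagonal patch, these properties are preserved.

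The main obstacle will be the uniformity of the step-$2$ growth: one must exhibit a single insertion operation (or a small finite menu of them) that can be applied repeatedly starting from the $n=10$ graph so as to realize every even $n$, rather than only an arithmetic progression with a larger common difference, as a naive full-ring elongation of a nanotube would give. Concretely, the delicate points are (i) producing the first hexagon out of the purely quadrilateral/pentagonal base while preserving the face profile $(p_4,p_5)=(5,2)$, and (ii) checking that the forbidden subgraph $H_{21}$ and the fullerene axioms survive every insertion; these are exactly the verifications that the figures in \cite{9} are designed to make transparent. If a clean single operation is awkward at the smallest sizes, I would instead cover the few initial values $n=10,12,14,\dots$ by ad hoc drawings (using $F_{12},F_{14},F_{18},F_{20}$ where convenient) and reserve the insertion argument for the tail, which still yields the full range of even $n\geqslant 10$.
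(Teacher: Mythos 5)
Your reduction of Corollary \ref{ak-any} to Theorem \ref{2020-ak} and the Euler-formula bookkeeping are sound: $|V(F)|=2(p_4+p_5+p_6)-4$ and $2p_4+p_5=12$ do give $|V(F)|=10+2p_6$ for the profile $(p_4,p_5)=(5,2)$, and your base graph is right --- the pentagonal prism is the unique $10$-vertex (4,5,6)-fullerene, and deleting the three edges $u_4u_5,u_4v_4,v_3v_4$ (pentagons $u_1\cdots u_5$, $v_1\cdots v_5$, rungs $u_iv_i$) leaves a connected bipartite graph with parts of sizes $6$ and $4$, so it is an anti-Kekul\'e set of size $3$ and the prism lies in $\mathscr{F}_2$. (Note the paper itself gives no proof of this corollary; it is imported from \cite{9}.) The genuine gap is that your growth step does not exist. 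In a cubic plane graph, the only way to add exactly two degree-$3$ vertices is to subdivide two edges lying on a common face and join the two subdivision vertices; this splits that face into two faces of sizes $s_1+s_2=s+4$ and \emph{enlarges by one} each of the two faces on the far sides of the subdivided edges. Those two enlarged faces must therefore have size at most $5$ beforehand, so the operation can never be ``carried out in a hexagonal region away from the two pentagons'': there it would create heptagons. (Equivalently, by Euler's formula an all-hexagon patch with prescribed boundary has a determined number of interior vertices, so no vertex-adding surgery can be confined to such a region.) Consequently every legitimate $+2$-vertex, profile-preserving step must be performed at the quadrilateral/pentagon cluster itself, and a short check of the possibilities shows it necessarily changes which faces are pentagons (e.g.\ a quadrilateral grows into a pentagon while a pentagon grows into a hexagon). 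Both pillars of your argument --- that the insertion avoids the pentagons, and that it therefore ``leaves the copy of $H_{21}$ intact'' --- collapse; the survival of $H_2$ or $H_{21}$, simplicity, and $3$-connectivity would have to be re-proved after every step, and that verification is precisely what is missing.

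Your fallback does not repair this. The prism has no hexagonal face, so even the first step $10\to12$ cannot follow your recipe, and the sporadic graphs $F_{12},F_{14},F_{18},F_{20}$ cover only $n=12,14,18,20$, leaving $n=16$ and every even $n\geqslant22$ dependent on the nonexistent insertion. A workable version of your strategy has to produce, for each admissible profile and each $p_6\geqslant0$, an explicit fullerene containing one of the configurations $H_1,\ldots,H_{21}$ --- for instance tube-like families built on a fixed cap carrying the pentagon--quadrilateral configuration, using caps with several different boundary lengths so that the resulting arithmetic progressions of vertex counts jointly cover all even $n\geqslant10$ --- together with a membership check in $\cup_{i=1}^{3}\mathscr{F}_i$ for each family. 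That construction-plus-verification is the actual content behind the citation to \cite{9}, and it is the part your proposal leaves unproved.
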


By Theorems  \ref{last} and   \ref{ak-any}, the following result also holds.
\begin{cor}
For any even $n\geqslant 10$, there exists a (4,5,6)-fullerene %$F$
with $n$ vertices which is non-2-extendable.
\end{cor}

Considering that every (4,6)-fullerene has no pentagons while every (5,6)-fullerene has no quadrilaterals,  the following result is obtained immediately by Theorem \ref{th2}.

\begin{cor}[\cite{2,3}] \label{cor1}
All (4,6)-fulerene but not in $\mathcal{T}$ and (5,6)-fullerenes are 2-extendable.
\end{cor}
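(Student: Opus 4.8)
The plan is to read Theorem~\ref{th2} in its contrapositive form: a (4,5,6)-fullerene is 2-extendable precisely when it avoids every family in the exceptional list $\cup_{i=1}^4\mathscr{F}_i\cup\mathcal{T}\cup\{F_{12},F_{14},F_{18},F_{20}\}$. Thus the entire argument reduces to checking that a (4,6)-fullerene outside $\mathcal{T}$, and every (5,6)-fullerene, cannot lie in any of these bad families. I would drive everything through the face-count identity $2p_4+p_5=12$: a (4,6)-fullerene has no pentagons, so $p_5=0$ (and $p_4=6$), whereas a (5,6)-fullerene has no quadrilaterals, so $p_4=0$ (equivalently $p_5=12$).

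First I would dispose of the families defined by a prescribed pentagon count. By definition $\mathscr{F}_1$, $\mathscr{F}_2$ and $\mathscr{F}_3$ require $p_5=4$, $p_5=2$ and $p_5=6$ respectively, none of which equals $0$ or $12$; hence neither a (4,6)- nor a (5,6)-fullerene can belong to $\cup_{i=1}^3\mathscr{F}_i$. For $\mathscr{F}_4$ I would invoke the structural remark recorded just before Theorem~\ref{2020-ak}: every forbidden subgraph $H_i$ with $22\leqslant i\leqslant 122$ simultaneously contains a quadrilateral and a pentagon, and membership forces $2\leqslant p_5\leqslant 10$. Consequently $F\in\mathscr{F}_4$ would demand both $p_4\geqslant1$ and $p_5\geqslant1$, which excludes a (4,6)-fullerene ($p_5=0$) and a (5,6)-fullerene ($p_4=0$) simultaneously.

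It then remains to eliminate the sporadic graphs and $\mathcal{T}$. The four graphs $F_{12},F_{14},F_{18},F_{20}$ were introduced precisely as (4,5,6)-fullerenes carrying \emph{both} quadrilaterals and pentagons, so each has $p_4\geqslant1$ and $p_5\geqslant1$ and is therefore neither a (4,6)- nor a (5,6)-fullerene. Finally, each $T_n\in\mathcal{T}$ is capped on both ends by a triple of quadrilaterals, so $p_4\geqslant 6>0$; this shows at once that no (5,6)-fullerene lies in $\mathcal{T}$, while a (4,6)-fullerene is kept out of $\mathcal{T}$ by the hypothesis of the corollary. Assembling the cases, $F$ avoids the whole exceptional list and so is 2-extendable by Theorem~\ref{th2}.

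The only genuine subtlety---and the reason the statement must carry the qualifier ``but not in $\mathcal{T}$'' for the (4,6) case---is that a (4,6)-fullerene really can belong to $\mathcal{T}$ (indeed every $T_n$ has only quadrilateral and hexagonal faces, so it is itself a (4,6)-fullerene). That family therefore cannot be removed by a face count and has to be excluded by hand; every other exclusion is automatic from the values $p_5=0$ or $p_4=0$, which is exactly why the result follows immediately from Theorem~\ref{th2}.
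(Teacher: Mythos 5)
Your proposal is correct and follows the paper's own route: the paper likewise deduces the corollary ``immediately'' from Theorem~\ref{th2} by observing that every exceptional family other than $\mathcal{T}$ requires both a pentagon and a quadrilateral (or a nonzero pentagon count different from $0$ and $12$), while $\mathcal{T}$ is excluded by hypothesis for the (4,6) case and by the presence of quadrilaterals for the (5,6) case. Your write-up merely makes explicit the case check that the paper leaves implicit, including the correct observation that $\mathcal{T}$ consists of genuine (4,6)-fullerenes and hence must be excluded by hand rather than by face counting.
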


\section{The Proof of Fact 1 }

Let $|E(f_1)\cap E(F-G_1)|=0$, i.e., $a_1=a_2$.  Then $a_1(a_2), a_3, a_4$ and $a_5$ are pairwise different as $F-G_1$ has no isolated vertex and at most one pendent vertex.
If $E_0\cap E(f_2)=\emptyset$ and $E_0\cap E(f_5)=\emptyset$, then %both of $E(f_2)\cap E(F-G_1)$ and $E(f_5)\cap E(F-G_1)$ are paths of length 2 as $F_0'$ is bipartite.
$|E(f_2)\cap E(F-G_1)|=2$ and $|E(f_5)\cap E(F-G_1)|=2$.
Thus, there is a vertex $y_1\in Y_0$ adjacent to $a_1, a_3$ and $a_5$. By Lemma \ref{lem1}, $\{a_3a_4,a_4a_5\}\subseteq E(F)$, which means $E_0=\{a_3a_4,a_4a_5\}$, a contradiction as $E_0$ is a matching.
Hence, $E_0\cap E(f_2)\neq \emptyset$ or $E_0\cap E(f_5)\neq\emptyset$.  Without loss of generality, let $E_0\cap E(f_2)\neq \emptyset$ and $e_1\in E(f_2)$.  If $|E(f_2)\cap E(F-G_1)|=1$, i.e., $e_1=a_1a_3$, then $a_4=a_5$ and $a_3a_4\in E(F)$ by Lemma \ref{lem1},
 %which means $a_1$ and $a_4$ are two pendent vertex of $F-G_1$,  a contradiction. Hence, there is a path $P_4$ connecting $a_1$ and $a_3$.
 which means $E_0=\{a_1a_3, a_3a_4\}$, a contradiction as $E_0$ is a matching. Hence, $|E(f_2)\cap E(F-G_1)|=3$.

  %\textbf{Subcase 1.1.}
  \textbf{Case 1.} Vertex $a_1$ is incident with $e_1$. Then, let $f_2\cap (F-G_1)=a_1a_6y_1a_3$, i.e., $e_1=a_1a_6$. Since $E_0$ is a matching,  $a_5$ and $a_6$ have a common neighbor, say $y_2$.

  \begin{figure}[h]
    \centering
      \subfigure[]{
   \label{figf_i}
   \includegraphics[height=32mm]{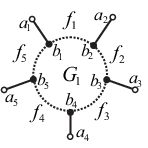}}
       % ~~~~~~
    \subfigure[the patch %$H_m$
    $N_m$]{
   \label{figpH_m}
   \includegraphics[height=32mm]{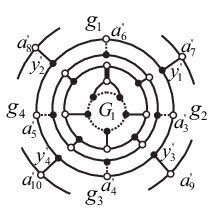}}
       % ~~~~~~
    \subfigure[$H_{22}\subseteq F-G_1$]{
   \label{figB22}
   \includegraphics[height=32mm]{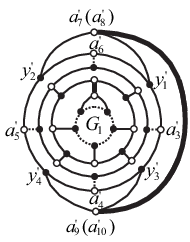}}
    % ~~~~~~~~~~~~~~~
    \subfigure[$H_{22}\subseteq F-G_1$]{
    \label{figB23}
    \includegraphics[height=32mm]{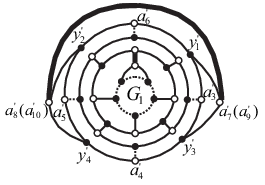}}
    % ~~~~~~
    \subfigure[$H_{23}\subseteq F-G_1$]{
    \label{figB24}
    \includegraphics[height=32mm]{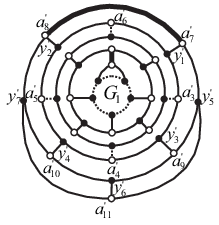}}
    % ~~~~~~
    \subfigure[$H_{23}\subseteq F-G_1$]{
    \label{figB25}
    \includegraphics[height=32mm]{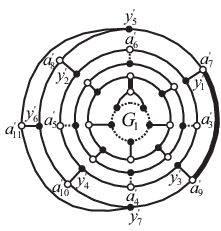}}
     %~~~~~~
    \subfigure[$H_{23}\subseteq F-G_1$]{
    \label{figB26}
    \includegraphics[height=32mm]{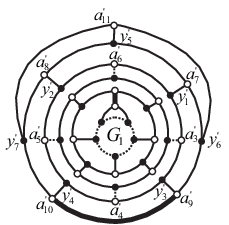}}
    \caption{Illustration for  Subcase 1.1.}
\end{figure}

  \textbf{Subcase 1.1.} Edge $e_2\notin E(f_3)\cup E(f_4)$. Then, $f_i\cap (F-G_1)$ is a path %of length 2
  $P_3$ for $i\in\{3,4\}$,
say $a_iy_ia_{i+1}$. Thus, we get a new patch of $F$, say $N_0$. Further, the outer face of $N_0$ is of size 8 with four black 2-degree vertices and four white 3-degree vertices alternating on its facial cycle.
Denote $L_1$ the layer around the patch $N_0$.
By Observation \ref{claim2}(i), we know that $L_1$ consists of exactly four hexagon if each face of $L_1$ is neither  a quadrilateral nor contains $e_2$. Further, by Lemma \ref{lem1}, we can get a new patch $N_1=N_0\cup L_1$ whose outer face is also of size 8 with four black 2-degree vertices and four white 3-degree vertices alternating on its facial cycle if each face of $L_1$ is neither  a quadrilateral nor contains $e_2$. Denote $L_2$ the layer around $N_1$. Similarly, we can also get a new patch $N_2=N_0\cup L_1\cup L_2$ which  has the boundary same with $N_0$ if each face of $L_2$ is neither  a quadrilateral nor contains $e_2$. Thus, we can do this operation repeatedly until the $m$th step such that there is a face of the layer $L_{m+1}$ around the patch $N_m$ being a quadrilateral or containing  $e_2$,  where $N_m=N_0\cup L_1\cup\cdots\cup L_m$.
 Without loss of generality, let $a_6'y_1'a_3'y_3'a_4'y_4'a_5'y_2'a_6'$ be the facial cycle of the outer face of $N_m$. Denote $a_{i+6}'$ the third neighbor of $y_i'$ and $g_i$($1\leqslant i\leqslant4$) the faces of the layer $L_{m+1}$ in the clockwise direction, where $g_1$ is the face along the path $y_2'a_6'y_1'$, see Fig. \ref{figpH_m}.
If $g_1$ is a quadrilateral, then $a_7'=a_8'$. By Lemma \ref{lem1},  $a_9'=a_{10}'$ and $e_2=a_7'a_9'$; see Fig. \ref{figB22}. %, i.e., $H_{22}\subseteq F-G_1=H_{22}$; see Fig. \ref{figB22}.
Thus, we have $H_{22}\subseteq F-G_1$.
If $e_2\in E(g_1)$, then $e_2=a_7'a_8'$ by Observation \ref{claim2}(ii). If $g_2$(or $g_4$) is a quadrilateral, then $a_9'=a_7'$ and $a_{10}'=a_8'$; see Fig. \ref{figB23}. %, i.e., $F-G_1=H_{23}$; see Fig. \ref{figB23}.
Thus, we also have $H_{22}\subseteq F-G_1$.
If both $g_2$ and $g_4$ are not quadrilateral, then $a_9'\neq a_{10}'$ by Lemma \ref{lem1} and each of $g_i$($2\leqslant i\leqslant4$) is a hexagon by Observation \ref{claim2}(i). Let $y_{i+3}'$ be the sixth vertices of $g_i$, $2\leqslant i\leqslant4$. Then, %$F-G_1=H_{30}$ by Lemma \ref{lem1}
$H_{23}\subseteq F-G_1$ by Lemma \ref{lem1}; see Fig. \ref{figB24}. If $g_1$ is a hexagon, then $a_7',a_8',a_9'$ and $a_{10}'$ are pairwise different by Lemma \ref{lem1}. Thus, there is vertex $y_5'$ being incident with $a_7'$ and $a_8'$.
If $e_2\in E(g_2)$ or $e_2\in E(g_4)$, without loss of generality, assume $e_2\in E(g_2)$, then $e_2=a_7'a_9'$. %and $g_3,g_4$ are hexagons by Observation \ref{claim2}(i).
Further, $a_i'$ and $a_{10}'$ have a common neighbour, say $y_{i-2}'$, $i=8,9$.  By Lemma \ref{lem1}, $y_6'\neq y_7'$. %and $y_7'$ be the sixth vertices of $g_3$ and $g_4$ respectively.
Thus, %$F-G_1=H_{31}$
$H_{23}\subseteq F-G_1$ by Lemma \ref{lem1}; see Fig. \ref{figB25}. If $e_2\notin E(g_2)\cup E(g_4)$, then $e_2\in E(g_3)$, i.e., $e_2=a_9'a_{10}'$. Thus, %$g_3$ and $g_4$ are hexagons.  Let $y_6'$ and $y_7'$ be the sixth vertices of $g_2$ and $g_4$ respectively.
$a_i'$ and $a_{i+2}'$ have a common neighbour, say $y_{i-1}'$, $i=7,8$. Further, $y_6'\neq y_7'$.
Then, %by Lemma \ref{lem1}, $F-G_1=H_{32}$;
$H_{23}\subseteq F-G_1$ by Lemma \ref{lem1}; see Fig. \ref{figB26}.

  \textbf{Subcase 1.2.} Edge $e_2\in E(f_3)\cup E(f_4)$.
  Without loss of generality, let $e_2\in E(f_3)$. Then $f_4\cap (F-G_1)$ is a path %of length 2
  $P_3$, say $a_4y_3a_5$. If $e_2=a_3a_4$, then
  $F-G_1=H_{23}$ by Lemma \ref{lem1}; see Fig. \ref{figB11}. If $e_2\neq a_3a_4$, then $|E(f_3)\cap E(F-G_1)|=3$.

\begin{figure}[h]
    \centering
    \subfigure[$F-G_1=H_{23}$]{
   \label{figB11}
   \includegraphics[height=28mm]{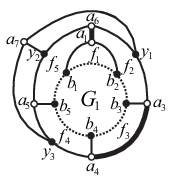}}
      % ~~~~~~~~~~~~~~
     \subfigure[the patch $N_m^1$]{
   \label{figHm}
   \includegraphics[height=28mm]{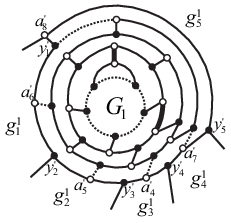}}
   % ~~~~~~~~~~~~~~
         \subfigure[$F-G_1=H_{30}^m$]{
   \label{figB12}
   \includegraphics[height=26mm]{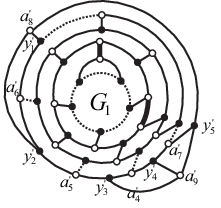}}
   % ~~~~~~~~~~~~~~
   \subfigure[$F-G_1=H_{31}^m$]{
   \label{figB13}
   \includegraphics[height=26mm]{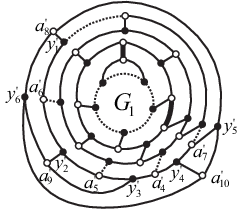}}
  %  ~~~~~~~~~~~~~~
   \subfigure[$F-G_1=H_{32}^m$]{
   \label{figB14}
   \includegraphics[height=28mm]{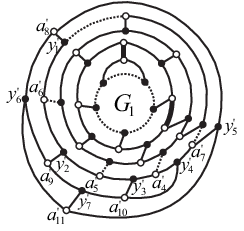}}
    %  ~~~~~~~~~~~~~~
     \subfigure[$F-G_1=H_{33}^m$]{
   \label{figB15}
   \includegraphics[height=28mm]{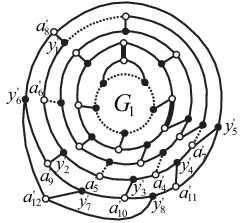}}
 %   ~~~~~~~~~~~~~~
   \subfigure[$F-G_1=H_{34}^m$]{
   \label{figB16}
   \includegraphics[height=28mm]{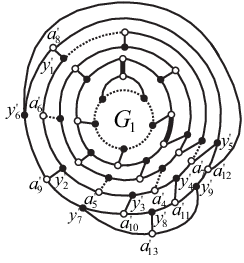}}
    \caption{Illustration for Subcase 1.2.}
\end{figure}

 If $a_3$ is incident with $e_2$, then let $f_3\cap (F-G_1)=a_3a_7y_4a_4$. Let $a_8$ and $y_5$ be the third neighbors of $y_1$ and $a_7$ respectively. Then, $a_8\neq a_7$ and $y_5\notin \{y_2,y_3,y_4\}$ by Lemmas \ref{lem1} and \ref{lem2}. Thus, $a_8y_5\in E(F)$ by Observation \ref{claim2}(ii).  Hence, we get a new patch $N_0^1$. Further,  the outer face of $N_0^1$ is of size 10 and the degrees of vertices on the boundary of $N_0^1$ form a degree-array [2,2,3,2,3,2,3,2,3,3] with one white 2-degree vertex and four black 2-degree vertices which is read from vertex $a_8$ in the clockwise direction.
  Note that each face of $F$ but not $N_0^1$ is either a quadrilateral or a hexagon by Observation \ref{claim2}(i). Denote $L_1$ the layer around $N_0^1$. Then, by Lemma \ref{lem1}, we can get a new patch $N_1^1=N_0^1\cup L_1$ which has the boundary same with $N_0^1$ if each face of $L_1$ is a hexagon. Let $L_2$ be the layer around the patch $N_1^1$. Similarly, we can also get a new patch $N_2^1=N_0^1\cup L_1\cup L_2$ which has the boundary same with $N_0^1$ if each face of $L_2$ is a hexagon.
  Thus, we can do the this operation repeatedly until the $m$th step such that the layer $L_{m+1}$ around $N_m^1$ contains a quadrilateral, where $N_m^1=N_0^1\cup L_1\cup \cdots\cup L_m$. Let $a_8'y_5'a_7'y_4'a_4'y_3'a_5'y_2'a_6'y_1'a_8'$ be the facial cycle of the outer face of $N_m^1$ and $g_i^1$($1\leqslant i\leqslant5$) the faces of $L_{m+1}$ in the counterclockwise direction, where $g_1^1$ is the face along the path $a_8'y_1'a_6'y_2'$; see Fig. \ref{figHm}.  If $g_1^1$ is a quadrilateral, then $a_8'y_2'\in E(F)$. Thus,  $F-G_1=H_{30}^m$ by Lemma \ref{lem1}; see Fig. \ref{figB12}. If $g_1^1$ is a hexagon, then there is a path $a_8'y_6'a_9'y_2'$ by the fact that $F-H_0^1$ is bipartite. If $g_2^1$ is a quadrilateral, then $a_9'y_3'\in E(F)$.  Thus, $F-G_1=H_{31}^m$ by Lemma \ref{lem1}; see Fig. \ref{figB13}. If $g_2^1$ is a hexagon, then there is a path $a_9'y_7'a_{10}'y_3'$.
  Thus, by a reasoning completely analogous to the analysis applied to $g_1^1$, we analysis the cases that the faces $g_3^1, g_4^1$ and $g_5^1$ are quadrilaterals in turn and get $F-G_1$ is isomorphic to $H_{32}^m, H_{33}^m$ and $H_{34}^m$ respectively; see Figs. \ref{figB14}-\ref{figB16}.

 If $a_4$ is incident with $e_2$, then we can assume $f_3 \cap(F-G_1) =a_3y_4a_7a_4$. Let $a_8$ and $y_5$ be the third neighbors of $y_3$ and $a_7$ respectively.  Thus, $a_8\neq a_7$ and $y_5\notin \{y_1,y_2,y_4\}$  by Lemmas \ref{lem1} and \ref{lem2}.
 Thus $a_8y_5\in E(F)$ by Observation \ref{claim2}(ii). Hence, we get a new patch $N_0^2$. Further, the outer face of $N_0^2$ is of size 10 and the degrees of vertices on the boundary of $N_0^2$ has a degree-array [2,2,3,2,3,2,3,2,3,3] with one white 2-degree vertices and four black 2-degree vertices which is read from $a_8$ in the counterclockwise direction.  Clearly, the boundary of $N_0^2$ is isomorphic to the boundary of $N_0^1$.  By the similar analysis of $F-N_0^1$, we can get $F-G_1\in \{H_{35}^m, \ldots, H_{39}^m\}$.

%\textbf{Subcase 1.2.}
\textbf{Case 2.} Vertex $a_3$ is incident with $e_1$. Then, let $f_2\cap (F-G_1)$ is $a_1y_1a_6a_3$, i.e., $e_1=a_6a_3$.
 If $e_2\notin \cup_{i=3}^5E(f_i)$, then  $y_1a_5\in E(F)$ and $f_i\cap (F-G_1)$ is a path %of length 2
 $P_3$ for  $i\in\{3,4\}$, say $a_iy_{i-1}a_{i+1}$. Thus, by Lemma \ref{lem1},  $a_6, y_2$ and $y_3$ have a common neighbor $a_7$ as $F_0'$ is bipartite.
 That is, $e_2=a_6a_7$, a contradiction as $E_0$ is a matching. Hence, $e_2\in \cup_{i=3}^5E(f_i)$.

 \begin{figure}[h]
    \centering
   \subfigure[%$F-G_1=H_{40}$
   $H_{22}\subseteq F-G_1$]{
   \label{figB27}
   \includegraphics[height=29mm]{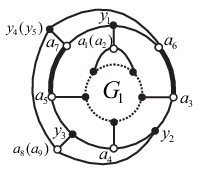}}
   % ~~~~~~~
    \subfigure[$H_{24}\subseteq F-G_1$]{
    \label{figB28}
    \includegraphics[height=30mm]{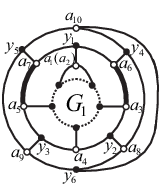}}
    % ~~~~~~~
  \subfigure[%$F-G_1=H_{42}$
  $H_{23}\subseteq F-G_1$]{
   \label{figB29}
   \includegraphics[height=29mm]{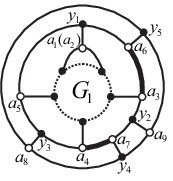}}
   % ~~~~~~~
    \subfigure[%$F-G_1=H_{43}$
    $H_{22}\subseteq F-G_1$]{
    \label{figB30}
    \includegraphics[height=29mm]{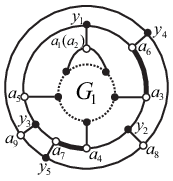}}
    %  ~~~~~~~
    \subfigure[%$F-G_1=H_{44}$
     $H_{22}\subseteq F-G_1$]{
   \label{figB31}
   \includegraphics[height=29mm]{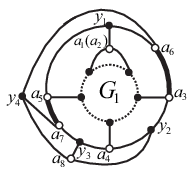}}
    \caption{Illustration for Case 2.}
\end{figure}

 If $e_2\in E(f_5)$, then %there is a path in $F-G_1$ connecting $y_1$ and $a_5$, say $y_1a_7a_5$,
  $a_5$ and $y_1$ have a common neighbor $a_7$ and
  $f_i\cap (F-G_1)$ is a path %of length 2
  $P_3$ for $i=3,4$, say $a_iy_{i-1}a_{i+1}$. %Thus, let $f_5\cap (F-G_1)=y_1a_7a_5$ and $f_i\cap (F-G_1)=a_iy_{i-1}a_{i+1}$ for $i=3,4$.
    %say $y_1a_7a_5$, i.e, $e_2=a_7a_5$. Denote  $E(f_i)\cap E(F-G_1)=a_iy_{i-1}a_{i+1}$, $i=3,4$.
  Let $y_4, y_5, a_8$ and $a_9$ be the third neighbors of $a_6,a_7,y_2$ and $y_3$ respectively. By Lemmas %\ref{lem1}
 \ref{lem1} and \ref{lem2}, $y_i\notin\{y_2,y_3\}$ and $a_{i+4}\notin\{a_6,a_7\}$, $i=4,5$. If $y_4=y_5$, then $a_8=a_9$ and $a_8y_4\in E(F)$ by Lemma \ref{lem1}. That is, %$F-G_1=H_{40}$;
  $H_{22}\subseteq F-G_1$; see Fig. \ref{figB27}.
If $y_4\neq y_5$, then $a_8\neq a_9$ by Lemma \ref{lem1}. Hence, $\{y_4a_8, y_5a_9\}\subseteq E(F)$ by Observation \ref{claim2}(ii). Moreover, $y_4$ and $y_5$ have a common neighbor $a_{10}$.
Thus, $H_{24}\subseteq F-G_1$ by Lemma \ref{lem1}; see Fig. \ref{figB28}. If $e_2\notin E(f_5)$, then $y_1a_5\in E(F)$.

If $e_2\in E(f_3)$, then $e_2\neq a_3a_4$ and we can assume $f_3\cap (F-G_1)=a_3y_2a_7a_4$ as $E_0$ is a matching. Thus, $f_4\cap (F-G_1)$ is a path %of length 2
$P_3$, say $a_4y_3a_5$. Let $a_8$ and $y_4$ be the third neighbors of $y_3$ and $a_7$ respectively. Then, $a_8\notin \{a_6,a_7\}$ and $y_4\neq y_3$ by Lemma \ref{lem1}. Thus, $a_8y_4\in E(F)$ by Observation \ref{claim2}(ii) while $a_8$ and $a_6$ have a common neighbor $y_5$ by Observation \ref{claim2}(i). Thus,  %$F-G_1=H_{42}$  by Lemma \ref{lem1}
by Lemma \ref{lem1}, $H_{23}\subseteq F-G_1$; see Fig. \ref{figB29}. If $e_2\notin E(f_3)$, then let $f_3\cap (F-G_1)=a_3y_2a_4$. %Denote $y_4$ and $a_8$ the third neighbors of $a_6$ and $y_2$. By Lemmas \ref{lem1} and \ref{lem2}, $y_4\neq y_2$ and $a_8\neq a_5$. Thus, $y_4a_8\in E(F)$ by Observation \ref{claim2}(ii).

If $e_2\in E(f_4)$, then $e_2\neq a_4a_5$ by Lemmas \ref{lem1}. If $a_4$ is incident with $e_2$, then let $f_4\cap (F-G_1)=a_4a_7y_3a_5$.  Let $y_4,y_5$ and $a_8$ be the third neighbors of $a_6,a_7$ and $y_2$ respectively. By Lemmas \ref{lem1} and \ref{lem2}, $y_2,y_3,y_4$ and $y_5$ are pairwise different.     Thus, $\{y_4a_8,a_8y_5\}\subseteq E(F)$ by Observation \ref{claim2}(ii). Hence, %$F-G_1=H_{43}$ $H_{23}$ or
 $H_{22}\subseteq F-G_1$ by Lemma \ref{lem1}; see Fig. \ref{figB30}. If $a_5$ is incident with $e_2$, then let  $f_4\cap (F-G_1)=a_4y_3a_7a_5$. Denote $y_4$ and $a_8$ the third neighbors of $a_6$ and $y_2$ respectively. By Lemma \ref{lem1}, $y_4\neq y_3$ and $a_8\neq a_7$. Thus, $\{y_4a_7, y_4a_8\}\subseteq E(F)$ by Observation \ref{claim2}(ii). Then, $y_3a_8\in E(F)$ by Lemma \ref{lem1}, i.e., %$F-G_1=H_{44}$% by Lemma \ref{lem1} $H_{23}$ or
$H_{22}\subseteq F-G_1$; see Fig. \ref{figB31}.

\section{The Proof of Fact 2 }

Let $|E(f_1)\cap E(F-G_1)|=1$. Then $a_1a_2\in E_0$.
Without loss of generality, let $e_1=a_1a_2$. Further, $a_1, \ldots, a_4$ and $a_5$ are pairwise different as $|E(f_1)\cap E(F-G_1)|\leqslant|E(f_i)\cap E(F-G_1)|$ for $2\leqslant i\leqslant5$.

 \begin{figure}[h]
    \centering
   \subfigure[$H_{22}\subseteq F-G_1$]{
   \label{figB32}
   \includegraphics[height=27mm]{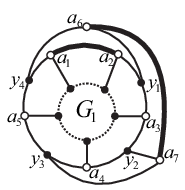}}
    %~~~~~~~
    \subfigure[$F-G_1=H_{25}$]{
   \label{figB32a}
   \includegraphics[height=27mm]{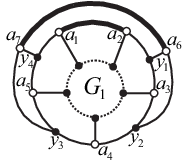}}
    %~~~~~~~
    \subfigure[$H_{23}\subseteq F-G_1$]{
   \label{figB32b}
   \includegraphics[height=27mm]{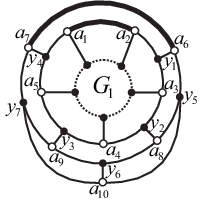}}
    %~~~~~~~
    \subfigure[$H_{25}\subseteq F-G_1$]{
    \label{figB33}
    \includegraphics[height=27mm]{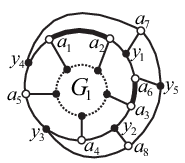}}
    %  ~~~~~~~
    \subfigure[$F-G_1=H_{26}$]{
   \label{figB34}
   \includegraphics[height=27mm]{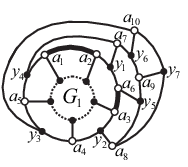}}
   %  ~~~~~~~
     \subfigure[$F-G_1=H_{22}$]{
   \label{figB35(a)}
   \includegraphics[height=27mm]{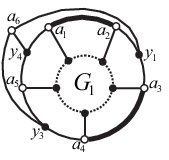}}
   % ~~~~~~~
    \subfigure[$F-G_1=H_{27}$]{
   \label{figB35}
   \includegraphics[height=27mm]{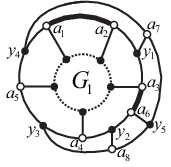}}
   % ~~~~~~~
    \subfigure[%$F-G_1=H_{52}$
    $H_{23}\subseteq F-G_1$]{
    \label{figB36}
    \includegraphics[height=27mm]{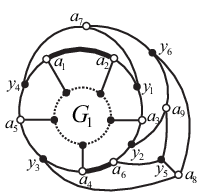}}
    \caption{Illustration for Fact 2. %Theorem \ref{th2}.
    }
\end{figure}

For any $2\leqslant i\leqslant5$, if $e_2\notin E(f_i)$, then $f_i\cap (F-G_1)$ is a path %of length 2
$P_3$, say $a_iy_{i-1}a_{i+1}$, where $a_6=a_1$. By the 3-regularity of $F$, $y_1,y_2,y_3$ and $y_4$ are pairwise different. If $e_1$ and $e_2$ do not lie on the same face of $F$, then %the face containing $e_1$ but not $f_1$ is a pentagon, i.e.,
$y_1$ and $y_4$ have a common neighbor by Observation \ref{claim2}(ii), say $a_6$. Then, by Lemma \ref{lem1}, there is a vertex $a_7$ adjacent to $a_6, y_2$ and $y_3$ as $F_0'$ is bipartite. Thus, $e_2=a_6a_7$ and %$F-G_1=H_{45}$
$H_{22}\subseteq F-G_1$; see Fig. \ref{figB32}. If $e_1$ and $e_2$  lie on the same face of $F$, then the face containing $e_1$ and $e_2$ is a hexagon  by Observation \ref{claim2}(iii). Let $a_6$ and $a_7$ be the third neighbors of $y_1$ and $y_4$  respectively. Then $e_2=a_6a_7$.  By Lemma \ref{lem1}, $\{a_6y_3,y_2a_7\}\nsubseteq E(F)$. If $a_6y_2\in E(F)$, then $a_7y_3\in E(F)$. Thus, $F-G_1=H_{25}$; see Fig. \ref{figB32a}.
If $a_6y_2\notin E(F)$, then there is a path $a_6y_5a_8y_2$. By Lemma \ref{lem1} and \ref{lem2}, $\{a_8y_3, y_3a_7\}\nsubseteq E(F)$. Thus, there are two paths $a_8y_6a_9y_3$ and $a_9y_7a_7$. By
Lemma \ref{lem1}, $y_5,y_6$ and $y_7$ are pairwise different. Hence, %$F-G_1=H_{47}$
$H_{23}\subseteq F-G_1$ by Lemma \ref{lem1}; see Fig. \ref{figB32b}.

If $e_2\in E(f_2)$ or $e_2\in E(f_5)$, without loss of generality, let $e_2\in E(f_2)$, then let $f_2\cap (F-G_1)=a_2y_1a_6a_3$ as $E_0$ is a matching. %Clearly, $e_1$ and $e_2$ do not lie on the same face of $F$. For $3\leqslant i\leqslant5$,
Further,  $f_i\cap (F-G_1)$ is a path %of length 2
$P_3$, say $a_iy_{i-1}a_{i+1}$, where $3\leqslant i\leqslant5$ and $a_6=a_1$.  By Lemmas \ref{lem1} and \ref{lem2}, $y_1,y_2,y_3$ and $y_4$ are pairwise different.  Since $F$ is 3-regular, any two faces have at most one common edge. Thus, $e_1$ and $e_2$ do not lie on the same face of $F$. By Observation \ref{claim2}(ii), $y_1$ and $y_4$ have a common neighbor, say $a_7$. Clearly, $a_7\neq a_6$.  Let $y_5$ and $a_8$ be the third neighbors of $a_6$ and $y_2$. Then, $y_5\notin\{y_2,y_3\}$ and $a_8\neq a_7$. % by Lemmas \ref{lem1} and \ref{lem2}.
Thus, $y_5a_8\in E(F)$ by Observation \ref{claim2}(ii).  If $y_5a_7\in E(F)$, then $a_8y_3\in E(F)$ and %$F-G_1=H_{48}$
$H_{25}\subseteq F-G_1$; see Fig. \ref{figB33}.  If $y_5a_7\notin E(F)$, then there is a path $y_5a_9y_6a_7$ by Observation \ref{claim2}(i). Further, $y_3$ and $y_6$ have a common neighbor $a_{10}$. Thus, $F-G_1=H_{26}$ by Lemma \ref{lem1}; see Fig. \ref{figB34}.

If $e_2\in E(f_3)$ or $e_2\in E(f_4)$, without loss of generality, let $e_2\in E(f_3)$, then $f_i\cap (F-G_1)$ is a path %of length 2
$P_3$, say $a_iy_{i-1}a_{i+1}$, where  for $i\in\{2,4,5\}$ and $a_6=a_1$. By the 3-regularity of $F$, $y_1, y_3$ and $y_4$ are pairwise different. If $e_2=a_3a_4$, then $F-G_1=H_{22}$ by Lemma \ref{lem1}; see Fig. \ref{figB35(a)}. If $e_2\neq a_3a_4$, then $|E(f_3)\cap E(F-G_1)|=3$.
 If $a_3$ is incident with $e_2$, then let $f_3\cap (F-G_1)=a_3a_6y_2a_4$. Clearly, $\{y_1a_6, y_4a_6\}\nsubseteq E(F)$. Thus,
 $y_1$ and $y_4$ have a common neighbor, say $a_7$.
Further, $a_6$ and $a_7$ have a common neighbor $y_5$.
Thus,  $F-G_1=H_{27}$ by Lemma \ref{lem1}; see Fig. \ref{figB35}.
 If $a_4$ is incident with $e_2$, then let $f_3\cap (F-G_1)=a_3y_2a_6a_4$. Obviously, $\{y_1a_6,y_4a_6\}\nsubseteq E(F)$. Thus, $y_1$ and $y_4$ have a common neighbour, say $a_7$. Let $a_8$ be the third neighbor of $y_3$. Then, $a_8\notin\{a_6,a_7\}$.
 Thus, $a_6$ and $a_8$  have a common neighbor $y_5$ by Observation \ref{claim2}(ii) while $a_7$ and $a_8$ have a common neighbor $y_6$  by Observation \ref{claim2}(i). Hence, %$F-G_1=H_{52}$
 $H_{23}\subseteq F-G_1$ by Lemma \ref{lem1}; see Fig. \ref{figB36}.

\section{The Proof of Fact 3 }

Let $|E(f_1)\cap E(F-G_1)|=2$. Then $E_0\cap E(f_1)=\emptyset$.
Without loss of generality, let $f_1\cap (F-G_1)=a_1y_1a_2$.
Since $|E(f_1)\cap E(F-G_1)|\leqslant|E(f_i)\cap E(F-G_1)|$, $|E(f_i)\cap E(F-G_1)|\geqslant2$, where $2\leqslant i\leqslant5$. That is, $e_2\neq a_ia_{i+1}$, $2\leqslant i\leqslant5$ and $a_6=a_1$.

%\textbf{Subcase 3.1.}
\textbf{Case 1.} $E_0\cap E(f_i)=\emptyset$ for any $2\leqslant i\leqslant5$.

Then, for each $2\leqslant i\leqslant5$, $f_i\cap (F-G_1)$ is a path %of length 2
$P_3$,  say $a_iy_ia_{i+1}$, where $a_6=a_1$.
Thus, we get a new patch $N_0^3$ and  the outer face of $N_0^3$ is of size 10 with five black 2-degree vertices and five white 3-degree vertices alternating on its facial cycle.
Denote $\bar{N}_0^3=F-N_0^3$. By Lemmas \ref{lem1} and \ref{lem2}, $\bar{N}_0^3$ has no isolated vertex. We claim $\bar{N}_0^3$ has no pendent vertex.  Otherwise, assume $a_6$ is a pendent vertex of $\bar{N}_0^3$. Then, by Lemma \ref{lem1}, $a_6$ must be adjacent to two successive 2-degree vertices of $N_0^3$, say $y_1$ and $y_2$. Let $a_7$ be the third neighbor of $y_3$. Clearly, $a_7\neq a_6$.
 If $a_6a_7\in E(F)$,  then, by Lemma \ref{lem1}, there is a vertex $a_8$ adjacent to $y_5, y_4$ and $a_7$, i.e., $\{a_6a_7,a_7a_8\}=E_0$,
 a contradiction as $E_0$ is a matching. If $a_6a_7\notin E(F)$, then  $a_6$ and $a_7$ have a common neighbor $y_6$ by Observation \ref{claim2}(i). Further, $y_5$ and $y_6$ have a common neighbor $a_8$.  Thus, by Lemma \ref{lem1}, $a_7, a_8$ and $y_4$ have a common neighbor $a_9$, i.e., $E_0=\{a_7a_9,a_8a_9\}$, a contradiction as $E_0$ is a matching. Our claim holds.
 Thus, the layer $L_1$ around $N_0^3$ does not contain quadrilaterals.
 By Observation \ref{claim2}(i), each face $f$ of $L_1$ is a hexagon if $E_0\cap E(f)=\emptyset$.
Further, by Lemma \ref{lem1}, we can get a new patch $N_1^3=N_0^3\cup L_1$ which has the boundary same with $N_0^3$ if $E_0\cap E(L_1)=\emptyset$. Denote $L_2$ the layer around the patch $N_1^3$. Then, similar as the above analysis, we can also get a new patch $N_2^3=N_0^3\cup L_1\cup L_2$ has the boundary same with $N_0^3$ if $E_0\cap E(L_2)=\emptyset$.
Thus, we can repeatedly do this operation until the $m$th step such that %$E_0\cap E(L_{m+1})\neq\emptyset$
$E_0\cap E(L_{m+1})\neq \emptyset$.
Let $C=a_1'y_1'a_2'y_2'a_3'y_3'a_4'y_4'a_5'y_5'a_1'$ be the facial cycle of the outer face of $N_m^3$, where $N_m^3=N_0^3\cup L_1 \cup\cdots\cup L_m$; see Fig. \ref{Hm3}.

 \begin{figure}[h]
    \centering
     \subfigure[the patch $N_m^3$]{
    \label{Hm3}
    \includegraphics[height=27mm]{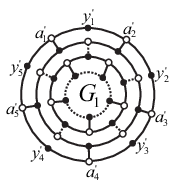}}
   %    ~~~~~~~
   \subfigure[%$F-G_1=H_{53}$
  $H_{22}\subseteq F-G_1$]{
   \label{figC1}
   \includegraphics[height=27mm]{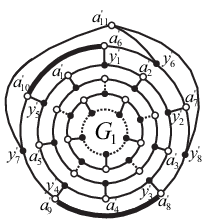}}
   % ~~~~~~~
    \subfigure[%$F-G_1=H_{28}$
  $H_{22}\subseteq F-G_1$]{
    \label{figC2}
    \includegraphics[height=27mm]{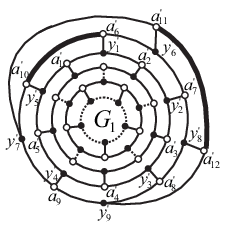}}
   %    ~~~~~~~
     \subfigure[%$F-G_1=H_{54}$
      $H_{25}\subseteq F-G_1$]{
   \label{figC5}
   \includegraphics[height=27mm]{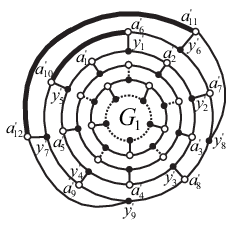}}
   % ~~~~~~~
    \subfigure[%$F-G_1=H_{55}$
    $H_{22}\subseteq F-G_1$]{
    \label{figC4}
    \includegraphics[height=27mm]{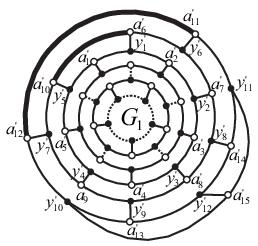}}
    \caption{Illustration for  Case 1.}
\end{figure}
Similar as the analysis of $\bar{N}_0^3$,  $\bar{N}_m^3=F-N_m^3$  has neither isolated vertices nor pendent vertices.
Let $a_{i+5}'$ be the third neighbor of $y_i'$ respectively, $1\leqslant i\leqslant5$. Then, $a_6',\ldots, a_9'$ and $a_{10}'$ are pairwise different. Since $E_0\cap E(L_{m+1})\neq \emptyset$, without loss of generality, let $e_1=a_{10}'a_6'$.  Since $E_0$ is a matching, $a_6'$ and $a_7'$ have a common neighbor $y_6'$ while $a_9'$ and $a_{10}'$ have a common neighbor $y_7'$.
If $e_2=a_7'a_8'$ or $e_2=a_8'a_9'$, without loss of generality, let $e_2=a_8'a_9'$,  then there is a vertex $y_8'$ being adjacent to $a_7'$ and $a_8'$. Thus,  %$F-G_1=H_{53}$
$H_{22}\subseteq F-G_1$ by Lemma \ref{lem1}; see Fig. \ref{figC1}.
If $e_2\neq a_i'a_{i+1}'$, then there is a vertex, say $y_{i+1}'$, adjacent to $a_i'$ and $a_{i+1}'$, $i=7,8$.  If $e_2$ and $e_1$ do not lie on the same face of $F$, then $y_6'$ and $y_7'$ have a common neighbor $a_{11}'$ by Observation \ref{claim2}(ii). Thus, by Lemma \ref{lem1}, there is a vertex $a_{12}'$ adjacent to $a_{11}', y_8'$ and $y_9'$; % as $F_0'$ is bipartite;
see Fig. \ref{figC2}. Thus, $e_2=a_{11}'a_{12}'$ and %$F-G_1=H_{28}$
$H_{22}\subseteq F-G_1$.
If $e_2$ and $e_1$ lie on the same face of $F$, then, by Observation \ref{claim2}(iii), the third neighbors of $y_6'$ and $y_7'$, say $a_{11}'$ and $a_{12}'$ respectively, are adjacent, i.e., $e_2=a_{11}'a_{12}'$. If $a_{12}'y_9'\in E(F)$, then $a_{11}'y_8'\in E(F)$, i.e., %$F-G_1=H_{54}$
$H_{25}\subseteq F-G_1$; see Fig. \ref{figC5}.  If $a_{12}'y_9'\notin E(F)$, then there is a path $a_{12}'y_{10}'a_{13}'y_9'$ by Observation \ref{claim2}(i).
Let $a_{14}'$ be the third neighbor of $y_8'$.  %By Lemmas \ref{lem1} and \ref{lem2},
Then, $a_{14}'\notin\{a_{11}', a_{13}'\}$. Thus, $a_{11}'$ and $a_{14}'$ have a common neighbor $y_{11}'$ while $a_{13}'$ and $a_{14}'$ have a common neighbor $y_{12}'$. %Further, $y_{10}', y_{11}'$ and $y_{12}'$ are pairwise different by Lemma \ref{lem1}.
Hence,  $H_{22}\subseteq F-G_1$ by Lemma \ref{lem1}; see Fig. \ref{figC4}.

%\textbf{Subcase 3.2.}
\textbf{Case 2.} $E_0\cap E(f_i)\neq\emptyset$ for some $2\leqslant i\leqslant5$.   By the planarity of $F$,
we can assume  $e_1\in E(f_2)$ and let $f_2\cap (F-G_1)=a_2a_6y_2a_3$. Further, there are at most one subscript $3\leqslant i\leqslant5$ such that $|E(f_i)\cap E(F-G_1)|=3$ as $|E_0|=2$.

%\textbf{Subcase 3.2.1.}
\textbf{Subcase 2.1.} Edge $e_2\in E(f_3)$. Then, for $i\in \{4, 5\}$, we can assume $f_i\cap (F-G_1)=a_iy_ia_{i+1}$, where $a_6=a_1$.

%\textbf{Subcase 3.2.1.1.}
\textbf{Subcase 2.1.1.} If vertex $a_3$ is incident with $e_2$, then let $f_3\cap (F-G_1)=a_3a_7y_3a_4$.

Let $a_8,a_9,y_6$ and $y_7$ be the third neighbors of $y_1,y_2, a_6$ and $a_7$ respectively. By Lemmas \ref{lem1} and \ref{lem2},
any two vertices of $\{a_6,a_7,a_8,a_9,y_1,\ldots,y_7\}$ are different.
 Thus, $\{a_8y_6,a_9y_7\}\subseteq E(F)$  by Observation \ref{claim2}(ii) and  we get a new patch $N_0^4$.
  Further, the outer face of $N_0^4$ is of size 14 and the degrees of the vertices on its boundary form a degree-array [2,2,3,3,2,2,3,2,3,2,3,2,3,3] with two white 2-degree vertices and five black 2-degree vertices which is read form $a_8$ in the clockwise direction.
 By Observation \ref{claim2}(ii), every face of $F$ but not $N_0^4$ is either a quadrilateral or a hexagon. Denote $L_1$ the layer around  $N_0^4$. By Lemma \ref{lem1},
 we can get a new patch $N_1^4=N_0^4\cup L_1$ which has the boundary same with $N_0^4$ if each face of $L_1$ is a hexagon.
Denote %$\bar{H}_1^4=F-H_1^4$ and
 $L_2$ the layer around $N_1^4$. Similarly, %similar as the analysis of $\bar{H}_0^4$,
  we can also get a new patch $N_2^4=N_0^1\cup L_1\cup L_2$ which has the boundary same with $N_0^4$ if each face of $L_2$ is a hexagon. %Let $L_3$ be the layer around $H_2^4$.
Thus, we can do this operation repeatedly until the $m$th step such that the layer $L_{m+1}$ around $N_m^4$ contains a quadrilateral, where $N_m^4=N_0^4\cup L_1\cup\cdots\cup L_m$. Let $a_1'y_1'a_8'y_6'a_6'y_2'a_9'y_7'a_7'y_3'a_4'y_4'a_5'y_5'a_1'$ be the facial cycle of the outer face of $N_m^4$ and $g_i^4$($1\leqslant i\leqslant7$) the faces of the layer $L_{m+1}$ in the counterclockwise direction, where $g_1^4$ is the face along the path $a_8'y_1'a_1'y_5'$; see Fig. \ref{H_0}.

If $g_1^4$ is a quadrilateral, then $a_8'y_5'\in E(F)$. Further, $y_4'$ and $y_6'$ have a common neighbor $a_{10}'$ while $a_9'$ and $a_{10}'$  have a common neighbor $y_8'$. Thus,  $F-G_1=H_{40}^m$ by Lemma \ref{lem1}; see Fig. \ref{figB79}.  If $g_1^4$ is a hexagon, then there is a path $a_8'y_8'a_{10}'y_5'$ and $a_{10}'\neq a_9'$ by Lemma \ref{lem1}.

If $g_2^4$ is a quadrilateral, then $a_{10}'y_4'\in E(F)$. Further,  $y_8'$ and $y_3'$ have a common neighbor $a_{11}'$. If $a_{11}'y_7'\in E(F)$, then $y_6'a_9'\in E(F)$. Thus, $F-G_1=H_{41}^m$; see Fig. \ref{figB80}. If $a_{11}'y_7'\notin E(F)$, then there is a path $a_{11}'y_9'a_{12}'y_7'$.
Moreover, $y_9'$ and $y_6'$ have a common neighbor $a_{13}'$. Thus, $F-G_1=H_{42}^m$ by Lemma \ref{lem1}; see Fig. \ref{figB81}. If
$g_2^4$ is a hexagon, then there is a path $a_{10}'y_9'a_{11}'y_4'$ by Lemma \ref{lem1}.

If $g_3^4$  is a quadrilateral, then $a_{11}'y_3'\in E(F)$. Further, $y_9'$ and $y_7'$  have a common neighbor $a_{12}'$.
If $a_{12}'y_8'\in E(F)$, then $y_6'a_9'\in E(F)$.  Thus, $F-G_1=H_{43}^m$; see Fig. \ref{figB82}. 
 \begin{figure}[h]
    \centering
   \subfigure[the patch $N_m^4$]{
   \label{H_0}
   \includegraphics[height=32mm]{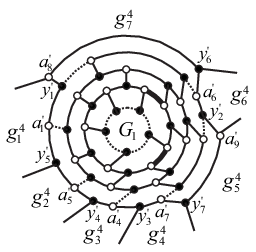}}
   % ~~~~~~~
    \subfigure[$F-G_1=H_{40}^m$]{
    \label{figB79}
    \includegraphics[height=32mm]{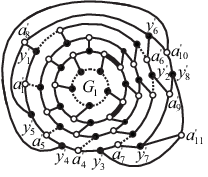}}
   %    ~~~~~~~
     \subfigure[$F-G_1=H_{41}^m$]{
   \label{figB80}
   \includegraphics[height=32mm]{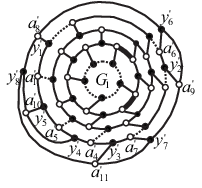}}
      %    ~~~~~~~
    \subfigure[$F-G_1=H_{42}^m$]{
    \label{figB81}
    \includegraphics[height=32mm]{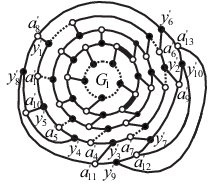}}
   %    ~~~~~~~
     \subfigure[$F-G_1=H_{43}^m$]{
   \label{figB82}
   \includegraphics[height=32mm]{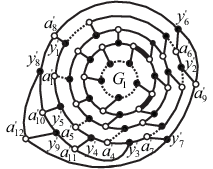}}
   % ~~~~~~~
    \subfigure[$F-G_1=H_{44}^m$]{
    \label{figB83}
    \includegraphics[height=32mm]{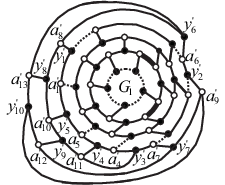}}
   %    ~~~~~~~
     \subfigure[$F-G_1=H_{45}^m$]{
   \label{figB84}
   \includegraphics[height=32mm]{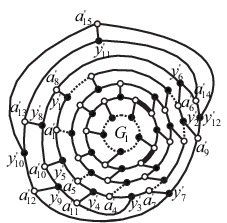}}
      %    ~~~~~~~
    \subfigure[$F-G_1=H_{46}^m$]{
    \label{figB85}
    \includegraphics[height=32mm]{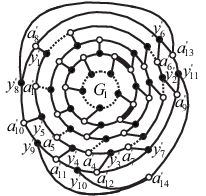}}
   %    ~~~~~~~
     \subfigure[$F-G_1=H_{47}^m$]{
   \label{figB86}
   \includegraphics[height=32mm]{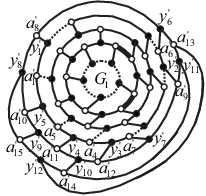}}
    % ~~~~~~~
    \subfigure[$F-G_1=H_{48}^m$]{
    \label{figB87}
    \includegraphics[height=32mm]{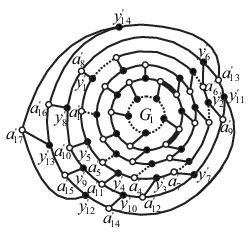}}
   %    ~~~~~~~
     \subfigure[$F-G_1=H_{49}^m$]{
   \label{figB88}
   \includegraphics[height=32mm]{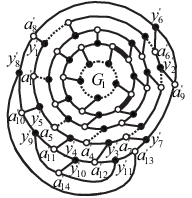}}
      %    ~~~~~~~
    \subfigure[$F-G_1=H_{50}^m$]{
    \label{figB89}
    \includegraphics[height=32mm]{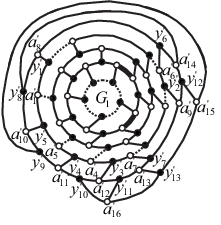}}
   %    ~~~~~~~
     \subfigure[$F-G_1=H_{51}^m$]{
   \label{figB90}
   \includegraphics[height=32mm]{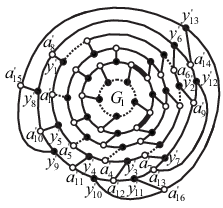}}
     %    ~~~~~~~
    \subfigure[$F-G_1=H_{52}^m$]{
    \label{figB91}
    \includegraphics[height=32mm]{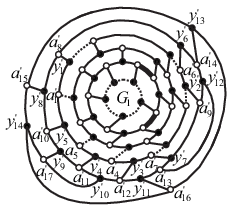}}
   %    ~~~~~~~
     \subfigure[$F-G_1=H_{53}^m$]{
   \label{figB92}
   \includegraphics[height=32mm]{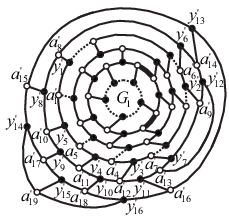}}
   \caption{Illustration for Subcase 2.1.1.}
   \end{figure}
 If $a_{12}'y_8'\notin E(F)$, then there is a path $a_{12}'y_{10}'a_{13}'y_8'$. If $a_{13}'y_6'\in E(F)$, then $y_{10}'a_9'\in E(F)$. Hence, $F-G_1=H_{44}^m$; see Fig. \ref{figB83}. If $a_{13}'y_6'\notin E(F)$, then there is a path $a_{13}'y_{11}'a_{14}'y_6'$. Thus, $a_{14}'$ and $a_9'$ have a common neighbor $y_{12}'$. Hence, $F-G_1=H_{45}^m$ by Lemma \ref{lem1}; see Fig. \ref{figB84}. If $g_3^4$ is a quadrilateral, then there is a path $a_{11}'y_{10}'a_{12}'y_3'$.

If $g_4^4$ is a quadrilateral, then $a_{12}'y_7'\in E(F)$. If $y_{10}'a_9'\in E(F)$, then the face along the path $y_9'a_{11}'y_{10}'a_9'y_2'a_6'y_6'$ is of size at least 8, a contradiction.  Hence, $y_{10}'a_9'\notin E(F)$ and there is a path $y_{10}'a_{14}'y_{11}'a_9'$. Further, $y_6'$ and $y_{11}'$ have a common neighbor $a_{13}'$. If $y_9'a_{14}'\in E(F)$, then $y_8'a_{13}'\in E(F)$. Thus, $F-G_1=H_{46}^m$; see Fig. \ref{figB85}. If $y_9'a_{14}'\notin E(F)$, then there is a path $y_9'a_{15}'y_{12}'a_{14}'$. If $a_{15}'y_8'\in E(F)$, then $y_{12}'a_{13}'\in E(F)$. That is, $F-G_1=H_{47}^m$; see Fig. \ref{figB86}. If $a_{15}'y_8'\notin E(F)$, then there is a path $a_{15}'y_{13}'a_{16}'y_8'$. Further, $a_{16}'$ and $a_{13}'$ have a common neighbor $y_{14}'$. Thus, $F-G_1=H_{48}^m$ by Lemma \ref{lem1}; see Fig. \ref{figB87}. If $g_4^4$ is a hexagon, then there is a path $a_{12}'y_{11}'a_{13}'y_7'$.

If $g_6^4$ is a quadrilateral, then $y_6'a_9'\in E(F)$. Further, $a_{13}'y_8'\in E(F)$. Thus, $F-G_1=H_{49}^m$ by Lemma \ref{lem1}; see Fig. \ref{figB88}. If $g_6^4$ is a hexagon, then there is a path $y_6'a_{14}'y_{12}'a_9'$.

If $g_7^4$ is a quadrilateral, then $y_8'a_{14}'\in E(F)$.  Further, $y_9'$ and $y_{12}'$ have a common neighbor $a_{15}'$ while $a_{15}'$ and $a_{13}'$ have a common neighbor $y_{13}'$. Thus, by Lemma \ref{lem1}, $F-G_1=H_{50}^m$; see Fig. \ref{figB89}. If $g_7^4$ is a hexagon, then there is a path $a_{14}'y_{13}'a_{15}'y_8'$.

Since the layer $L_{m+1}$ contains a quadrilateral, $g_5^4$ must be a quadrilateral, i.e., $a_{13}'y_{12}'\in E(F)$. Thus, $y_{11}'$ and $y_{13}'$ have a common neighbor $a_{16}'$. If $y_9'a_{15}'\in E(F)$, then $y_{10}'a_{16}'\in E(F)$. That is,  $F-G_1=H_{51}^m$; see Fig. \ref{figB90}.  If $y_9'a_{15}'\notin E(F)$, then there is a path $y_9'a_{17}'y_{14}'a_{15}'$. If $y_{10}'a_{17}'\in E(F)$, then $y_{14}'a_{16}'\in E(F)$. Hence, $F-G_1=H_{52}^m$; see Fig. \ref{figB91}. If $y_{10}'a_{17}'\notin E(F)$, then there is a path $y_{10}'a_{18}'y_{15}'a_{17}'$. Further, $a_{16}'$ and $a_{18}'$ have a common neighbor $y_{16}'$. Thus, $F-G_1=H_{53}^m$ by Lemma \ref{lem1}; see Fig. \ref{figB92}.

\textbf{Subcase 2.1.2.} If $a_4$ is incident with $e_2$, then let $f_3\cap (F-G_1)=a_3y_3a_7a_4$. Let $a_8,a_9,y_6$ and $y_7$ be the third neighbors of $y_1,y_4, a_6$ and $a_7$. By Lemmas \ref{lem1} and \ref{lem2},
any two vertices of $\{a_6, a_7, a_8, a_9, y_1,y_2, \ldots, y_7\}$ are different.
Thus, $\{a_8y_6, a_9y_7\}\subseteq E(F)$ by Observation \ref{claim2}(ii). Let $a_{10}$ be the third neighbor of $y_5$.
If $a_{10}=a_8$(resp. $a_{10}=a_9$), then $y_6a_9\in E(F)$(resp. $y_7a_8\in E(F)$). Thus, by Lemma \ref{lem1}, $F-G_1=H_{28}$; see Fig. \ref{figB93}.
If $a_{10}\notin\{a_8,a_9\}$, then $a_i$ and $a_{10}$ have a common neighbor, say $y_i$, $i=8,9$. Denote the patch by $N_0^5$. Then, the outer face of $N_0^5$ is of size 12 with six white 3-degree vertices and six black 2-degree vertices alternating on its facial cycle.
By Observation \ref{claim2}(i), every face of $F$ but not $N_0^5$ is either a quadrilateral or a hexagon. Denote $L_1$ the layer around  $N_0^5$.
 By Lemmas \ref{lem1} and \ref{lem2}, we can get a new patch $N_1^5=N_0^5\cup L_1$ which has the boundary same with $N_0^5$ if
each face of the layer $L_1$ is a hexagon.
Similarly, we can also get a new patch $N_2^5=N_0^5\cup L_1\cup L_2$ which also has the boundary same with $N_0^5$ if
each face of the layer $L_2$ is a hexagon, where $L_2$ is the layer around $N_1^5$.
Thus, we can do the this operation repeatedly until the $m$th step such that the layer $L_{m+1}$ around $N_m^5$ contains a quadrilateral, where $N_m^5=N_0^5\cup L_1\cup \cdots\cup L_m$. Denote the boundary of $N_m^5$ the cycle $a_8'y_6'a_6'y_2'a_3'y_3'a_7'y_7'a_9'y_9'a_{10}'y_8'a_8'$ and $g_i^5$($1\leqslant i\leqslant6$) the faces of the layer $L_{m+1}$ in the clockwise direction, where $g_1^5$ is the face along the path $y_6'a_6'y_2'$, see Fig. \ref{figH_m^5}. %Similar as the analysis of $\bar{H}_0^5$, $\bar{H}_m^5=F-H_m^5$ has no isolated vertex.

 \begin{figure}[h]
    \centering
     \subfigure[
   $F-G_1=H_{28}$]{
   \label{figB93}
   \includegraphics[height=29mm]{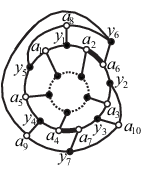}}
   %    ~~~~~~~
     \subfigure[the patch $N_m^5$]{
   \label{figH_m^5}
   \includegraphics[height=28mm]{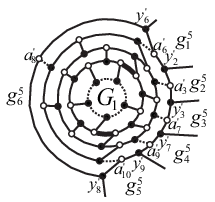}}
      %    ~~~~~~~
    \subfigure[$F-G_1=H_{54}^m$]{
    \label{figB94}
    \includegraphics[height=28mm]{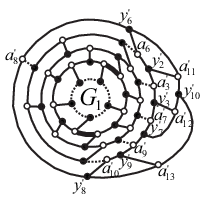}}
   %    ~~~~~~~
     \subfigure[$F-G_1=H_{55}^m$]{
   \label{figB95}
   \includegraphics[height=28mm]{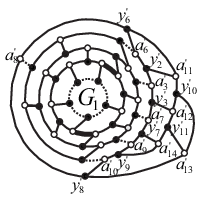}}
    % ~~~~~~~
    \subfigure[$F-G_1=H_{56}^m$]{
    \label{figB96}
    \includegraphics[height=28mm]{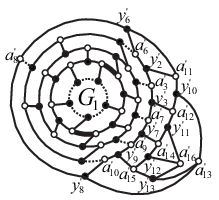}}
        % ~~~~~~~
    \subfigure[$F-G_1=H_{57}^m$]{
   \label{figB97}
   \includegraphics[height=27mm]{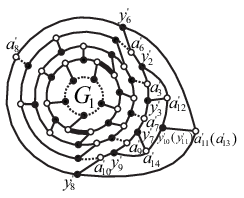}}
    %    ~~~~~~~
    \subfigure[$F-G_1=H_{58}^m$]{
    \label{figB98}
    \includegraphics[height=26mm]{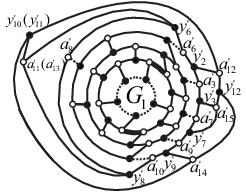}}
   %    ~~~~~~~
     \subfigure[$F-G_1=H_{59}^m$]{
   \label{figB99}
   \includegraphics[height=26mm]{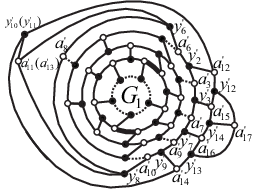}}
     %    ~~~~~~~
    \subfigure[$F-G_1=H_{60}^m$]{
    \label{figB100}
    \includegraphics[height=27mm]{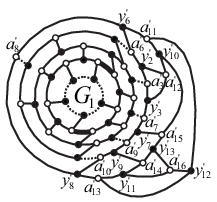}}
   %    ~~~~~~~
     \subfigure[$F-G_1=H_{61}^m$]{
   \label{figB101}
   \includegraphics[height=27mm]{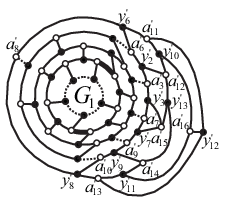}}
 \caption{Illustration for Subcase 2.1.2.}
   \end{figure}

If $g_1^5$ or $g_5^5$ is a quadrilateral, without loss of generality, let $g_1^5$ being a quadrilateral, then $y_6'$ and $y_2'$ have a common neighbor $a_{11}'$.  Let $y_{10}'$ be the third neighbor of $a_{11}'$.
Then, $y_{10}'\notin\{y_3', y_7',y_8',y_9'\}$ by Lemma \ref{lem1}.
Thus, $y_{10}'$ and $y_3'$ have a common neighbor $a_{12}'$ while $y_{10}'$ and $y_8'$ have a common neighbor $a_{13}'$. Clearly, $a_{13}'\neq a_{12}'$. If $a_{12}'y_7'\in E(F)$, then $a_{13}'y_9'\in E(F)$.  Thus, $F-G_1=H_{54}^m$; see Fig. \ref{figB94}. If $a_{12}'y_7'\notin E(F)$, then there is a path $a_{12}'y_{11}'a_{14}'y_7'$.  If $a_{14}'y_9'\in E(F)$, then $a_{13}'y_{11}'\in E(F)$. Hence, $F-G_1=H_{55}^m$; see Fig. \ref{figB95}.
If $a_{14}'y_9'\notin E(F)$, then there is a path $a_{14}'y_{12}'a_{15}'y_9'$. Further, $a_{15}'$ and $a_{13}'$ have a common neighbor $y_{13}'$. Thus, $F-G_1=H_{56}^m$ by Lemma \ref{lem1}; see Fig. \ref{figB96}. If both of $g_1^5$ and $g_5^5$ are hexagons, then there are two paths $ y_6'a_{11}'y_{10}'a_{12}'y_2'$ and $y_8'a_{13}'y_{11}'a_{14}'y_9'$ respectively.

If $g_6^5$ is a quadrilateral, then $a_{11}'=a_{13}'$ and $y_{11}'=y_{10}'$.
If $a_{12}'y_3'\in E(F)$, then $a_{14}'y_7'\in E(F)$, i.e., $F-G_1=H_{57}^m$; see Fig. \ref{figB97}.
If $a_{12}'y_3'\notin E(F)$, then there is a path $a_{12}'y_{12}'a_{15}'y_3'$.
If $a_{15}'y_7'\in E(F)$, then $a_{14}'y_{12}'\in E(F)$. Thus, $F-G_1=H_{58}^m$; see Fig. \ref{figB98}.
%If $a_{15}'=a_{16}'$, then $y_{13}'=y_{12}'$. Hence, $F-G_1=H_{58}$; see Fig. \ref{figB98}.
If $a_{15}'y_7'\notin E(F)$, then there is a path $a_{15}'y_{14}'a_{16}'y_7'$. Further, $a_{14}'$ and $a_{16}'$ have a common neighbour $y_{13}'$.
 Thus, $F-G_1=H_{59}^m$ by Lemma \ref{lem1}; see Fig. \ref{figB99}. If $g_6^5$ is a hexagon, then
 $a_{11}'$ and $a_{13}'$ have a common neighbor $y_{12}'$.

 If $g_2^5$ or $g_4^5$ is a quadrilateral, without loss of generality, let $g_2^5$ is a quadrilateral, i.e., $a_{12}'y_3'\in E(F)$, then $y_{10}'$ and $y_7'$ have a common neighbor $a_{15}'$.
Further, $a_{14}'$ and $a_{15}'$ have a common neighbor $y_{13}'$. Thus, $F-G_1=H_{60}^m$ by Lemma \ref{lem1}; see Fig. \ref{figB100}.

If both of $g_2^5$ and $g_4^5$ are hexagons, then $g_3^5$ must be a quadrilateral as the layer $L_{m+1}$ contains a quadrilateral, i.e., $y_3'$ and $y_7'$ have a common neighbor $a_{15}'$. Thus, $a_{12}'$ and $a_{15}'$ have a common neighbor $y_{13}'$. Further, $a_{14}'y_{13}'\in E(F)$. Thus, $F-G_1=H_{61}^m$ by Lemma \ref{lem1}; see Fig. \ref{figB101}.

%\textbf{Subcase 3.2.2.}
\textbf{Subcase 2.2.} Edge $e_2\in E(f_4)$. Then, $f_i\cap (F-G_1)$ is a path %of length 2
$P_3$, say $a_iy_ia_{i+1}$, where $i\in\{3, 5\}$ and  $a_6=a_1$.

If $a_4$ is incident with $e_2$, then let $f_4\cap (F-G_1)=a_4a_7y_4a_5$. Denote $a_8,a_9, y_6$ and $y_7$  the third neighbors of $y_1,y_3,a_6$ and $a_7$ respectively. By Lemmas \ref{lem1} and \ref{lem2}, 
any two vertices of $\{a_6,a_7,a_8,a_9,y_1,\ldots, y_7\}$ are different. Thus, $\{a_8y_6,a_9y_7\}\subseteq E(F)$ by Observation \ref{claim2}(ii). Then, we get a new patch $N_0^6$. Further, the outer face of $N_0^6$ is of size 14 and the degrees of the vertices on its boundary form a degree-array [2,2,3,2,3,3,2,2,3,2,3,2,3,3] with two white 2-degree vertices and five black 2-degree vertices which read from $a_8$ in the clockwise direction.
By Observation \ref{claim2}(i),
every face of $F$ but not $N_0^6$ is either a quadrilateral or a hexagon. Denote $L_1$ the layer around the patch $N_0^6$.
\begin{figure}
      %    ~~~~~~~
   \subfigure[the patch $N_m^6$]{
   \label{H_0^2}
   \includegraphics[height=30mm]{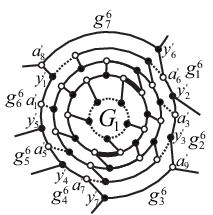}}
    % ~~~~~~~
    \subfigure[$F-G_1=H_{62}^m$]{
    \label{figB102}
    \includegraphics[height=28mm]{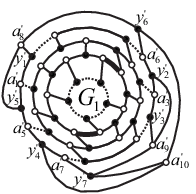}}
   % ~~~~~~~
    \subfigure[$F-G_1=H_{63}^m$]{
    \label{figB103}
    \includegraphics[height=28mm]{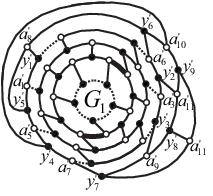}}
   %    ~~~~~~~
     \subfigure[$F-G_1=H_{64}^m$]{
   \label{figB104}
   \includegraphics[height=28mm]{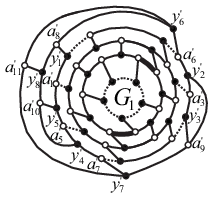}}
      %    ~~~~~~~
    \subfigure[$F-G_1=H_{65}^m$]{
    \label{figB105}
    \includegraphics[height=28mm]{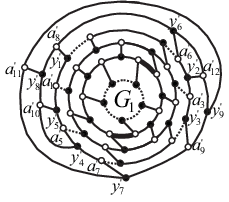}}
   %    ~~~~~~~
     \subfigure[$F-G_1=H_{66}^m$]{
   \label{figB106}
   \includegraphics[height=28mm]{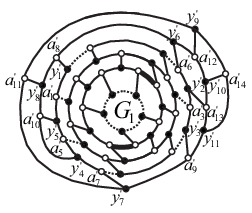}}
   % ~~~~~~~
    \subfigure[$F-G_1=H_{67}^m$]{
    \label{figB107}
    \includegraphics[height=28mm]{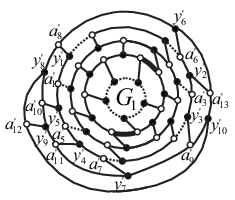}}
   %    ~~~~~~~
     \subfigure[$F-G_1=H_{68}^m$]{
   \label{figB108}
   \includegraphics[height=28mm]{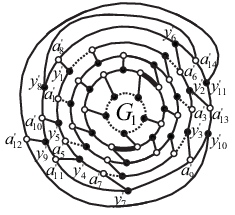}}
      %    ~~~~~~~
    \subfigure[$F-G_1=H_{69}^m$]{
    \label{figB109}
    \includegraphics[height=28mm]{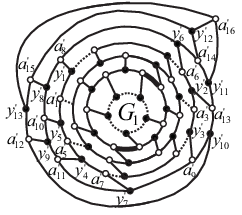}}
   %    ~~~~~~~
     \subfigure[$F-G_1=H_{70}^m$]{
   \label{figB110}
   \includegraphics[height=28mm]{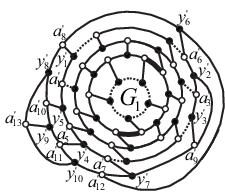}}
    % ~~~~~~~
    \subfigure[$F-G_1=H_{71}^m$]{
    \label{figB111}
    \includegraphics[height=28mm]{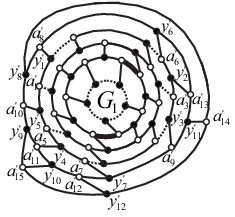}}
   %    ~~~~~~~
     \subfigure[$F-G_1=H_{72}^m$]{
   \label{figB112}
   \includegraphics[height=28mm]{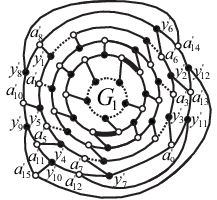}}
      %    ~~~~~~~
    \subfigure[$F-G_1=H_{73}^m$]{
    \label{figB113}
    \includegraphics[height=28mm]{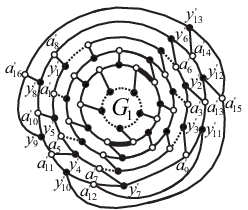}}
     ~~~~
     \subfigure[$F-G_1=H_{74}^m$]{
   \label{figB114}
   \includegraphics[height=28mm]{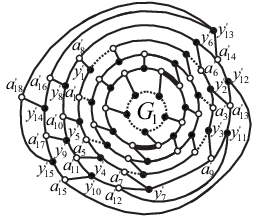}}
        ~~~~~
    \subfigure[$F-G_1=H_{75}^m$]{
    \label{figB115}
    \includegraphics[height=28mm]{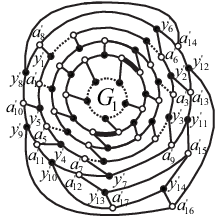}}
        ~~~~~~~
     \subfigure[$H_{22}\subseteq F-G_1$]{
   \label{figB116}
   \includegraphics[height=28mm]{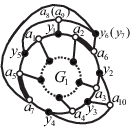}}
    \caption{Illustration for  Subcase 2.2.}
\end{figure}
By Lemma \ref{lem1}, we can get a new patch $N_1^6=N_0^6\cup L_1$ which has the boundary same with $N_0^6$ if each face of $L_1$ is a hexagon.
Denote $L_2$ the layer around $N_1^6$.
Similarly, we can also get a new patch $N_2^6=N_0^6\cup L_1\cup L_2$ which has the boundary same with $N_0^6$ if each face of $L_2$ is a hexagon.
 Thus, we can do the above operation repeatedly until the $m$th step such that the layer $L_{m+1}$ around $N_m^6$ contains a quadrilateral, where $N_m^6=N_0^6\cup L_1\cup\cdots \cup L_m$. Denote the boundary of $N_m^6$ the cycle $a_8'y_6'a_6'y_2'a_3'y_3'a_9'y_7'a_7'y_4'a_5'y_5'a_1'y_1'a_8'$ and $g_i^6$($1\leqslant i\leqslant7$)  the faces of $L_{m+1}$ in the clockwise direction, where $g_1^6$ is the face along the path $y_6'a_6'y_2'$, see Fig. \ref{H_0^2}.

If $g_6^6$ is a quadrilateral, i.e., $a_8'y_5'\in E(F)$, then $y_4'$ and $y_6'$ have a common neighbor $a_{10}'$. If $y_2'a_9'\in E(F)$, then $a_{10}'y_7'\in E(F)$. Thus, $F-G_1=H_{62}^m$; see Fig. \ref{figB102}. If $y_2'a_9'\notin E(F)$, then there is a path $y_2'a_{11}'y_8'a_9'$. Further, $a_{10}'$ and $a_{11}'$ have a common neighbor $y_9'$. Thus,  $F-G_1=H_{63}^m$ by Lemma \ref{lem1}; see Fig. \ref{figB103}.
If $g_6^6$ is a hexagon, then there is a path $a_8'y_8'a_{10}'y_5'$ and $a_{10}'\neq a_9'$ by Lemma \ref{lem1}.

If $g_5^6$ is a quadrilateral, i.e., $a_{10}'y_4'\in E(F)$, then $y_8'$ and $y_7'$ have a common neighbor $a_{11}'$. If $a_{11}'y_6'\in E(F)$, then $y_2'a_9'\in E(F)$. Thus, $F-G_1=H_{64}^m$; see Fig. \ref{figB104}.
If $a_{11}'y_6'\notin E(F)$, then there is a path $a_{11}'y_9'a_{12}'y_6'$. If $a_{12}'y_2'\in E(F)$, then $a_9'y_9'\in E(F)$. That is, $F-G_1=H_{65}^m$; see Fig. \ref{figB105}.   If $a_{12}'y_2'\notin E(F)$, then there is a path $a_{12}'y_{10}'a_{13}'y_2'$. Further, $a_{13}'$ and $a_9'$ have a common neighbor $y_{11}'$. Thus, $F-G_1=H_{66}^m$ by Lemma \ref{lem1}; see Fig. \ref{figB106}.
If $g_5^6$ is a hexagon, then there is a path $a_{10}'y_9'a_{11}'y_4'$.

If $g_4^6$ is a quadrilateral, then $a_{11}'y_7'\in E(F)$. Further, $y_9'a_9'\notin E(F)$ since otherwise the face along the path $y_8'a_{10}'y_9'a_9'y_3'a_3'y_2'$ is of size at least 8, a contradiction. Hence, there is a path $y_9'a_{12}'y_{10}'a_9'$. Further, $y_{10}'$ and $y_2'$ have a common neighbor $a_{13}'$. If $a_{13}'y_6'\in E(F)$, then $a_{12}'y_8'\in E(F)$. Thus, $F-G_1=H_{67}^m$; see Fig. \ref{figB107}.  If $a_{13}'y_6'\notin E(F)$, then there is a path $a_{13}'y_{11}'a_{14}'y_6'$. If $y_8'a_{14}'\in E(F)$, then $y_{11}'a_{12}'\in E(F)$. Thus, $F-G_1=H_{68}^m$; see Fig. \ref{figB108}. If $y_8'a_{14}'\notin E(F)$, then there is a path $y_8'a_{15}'y_{12}'a_{14}'$. Further, $a_{12}'$ and $a_{15}'$ have a common neighbor $y_{13}'$. Thus, $F-G_1=H_{69}^m$ by Lemma \ref{lem1}; see Fig. \ref{figB109}.
If $g_4^6$ is a hexagon, then there is a path $a_{11}'y_{10}'a_{12}'y_7'$.

If $g_2^6$ is a quadrilateral, then $y_2'a_9'\in E(F)$. Further, $a_{12}'y_6'\in E(F)$. Thus, $F-G_1=H_{70}^m$ by Lemma \ref{lem1}; see Fig. \ref{figB110}.
If $g_2^6$ is a hexagon, then there is a path $y_2'a_{13}'y_{11}'a_9'$.

If $g_1^6$ is a quadrilateral, i.e., $a_{13}'y_6'\in E(F)$, then $y_{11}'$ and $y_8'$ have a common neighbor $a_{14}'$. Further, $a_{14}'$ and $a_{12}'$ have a common neighbor $y_{12}'$. Thus, $F-G_1=H_{71}^m$ by Lemma \ref{lem1}; see Fig. \ref{figB111}.
If $g_1^6$ is a hexagon, then there is a path $a_{13}'y_{12}'a_{14}'y_6'$.

If $g_3^6$ is a quadrilateral, then $a_{12}'y_{11}'\in E(F)$. Further, $y_{10}'$ and $y_{12}'$ have a common neighbor $a_{15}'$. If $y_8'a_{14}'\in E(F)$, then $y_9'a_{15}'\in E(F)$. Thus, $F-G_1=H_{72}^m$; see Fig. \ref{figB112}.
If $a_{14}'y_8'\notin E(F)$, then there is a path $a_{14}'y_{13}'a_{16}'y_8'$. If $a_{16}'y_9'\in E(F)$, then $a_{15}'y_{13}'\in E(F)$. That is, $F-G_1=H_{73}^m$; see Fig. \ref{figB113}.
If $a_{16}'y_9'\notin E(F)$, then there is a path $a_{16}'y_{14}'a_{17}'y_9'$. Further, $a_{17}'$ and $a_{15}'$ have a common neighbor $y_{15}'$. Thus, $F-G_1=H_{74}^m$  by Lemma \ref{lem1}; see Fig. \ref{figB114}.
 If $g_3^6$ is a hexagon, then there is a path $a_{12}'y_{13}'a_{15}'y_{11}'$.

Thus, $g_7^6$ is a quadrilateral as $L_{m+1}$ contains a  quadrilateral, i.e., $a_{14}'y_8'\in E(F)$. Thus, $y_{12}'$ and $y_9'$ have a common neighbor $a_{16}'$, and $a_{15}'$ and $a_{16}'$ have a common neighbor $y_{14}'$. Thus, $F-G_1=H_{75}^m$ by Lemma \ref{lem1}; see Fig. \ref{figB115}.

If $a_5$ is incident with $e_2$, then let $f_4\cap (F-G_1)=a_4y_4a_7a_5$. Let $a_8, a_9,y_6$ and $y_7$ be the third neighbors of $y_1,y_5,a_6$ and $a_7$ respectively. Clearly, $a_8\notin\{a_6, a_7\},a_9\notin\{a_6, a_7\}, y_6\notin \{y_1,\ldots, y_5\}$ and $y_7\notin \{y_1,\ldots, y_5\}$. Thus, $e_1$ and $e_2$ do not lie on the same face of $F$.  By Observation \ref{claim2}(ii), $\{a_8y_6,a_9y_7\}\subseteq E(F)$. %If
If $a_9=a_8$, then $y_7=y_6$. Thus, $H_{22}\subseteq F-G_1$  by Lemma \ref{lem1}; see Fig. \ref{figB116}. If $a_9\neq a_8$, then $y_7\neq y_6$. %Further, $a_8$ and $y_7\neq y_6$.
Further, $a_8$ and $a_9$ have a common neighbor $y_8$. Thus, we get a patch $N_0^7$ which has the boundary same with $N_0^5$ of  Subcase 3.2.1.2.
Thus, similar as the analysis of $F-N_0^5$, we can get $F-G_1\in \{H_{76}^m, \ldots, H_{82}^m\}$.

%\textbf{Subcase 3.2.3.}
\textbf{Subcase 2.3.} Edge $e_2\in E(f_5)$.  Then, let $f_i\cap (F-G_1)=a_iy_ia_{i+1}$ for  $i=3, 4$.

If $a_5$ is incident with $e_2$, then it is similar to the case that $a_4$ is incident with $e_2$ of Subcase 3.2.2. That is, $F-G_1\in \{H_{62}^m, \ldots, H_{75}^m\}$.

If $a_1$ is incident with $e_2$, then we can assume $f_i\cap (F-G_1)=a_5y_5a_7a_1$. Let $a_8, y_6$ and $y_7$ be the third neighbors of $y_1, a_6$ and $a_7$ respectively. Clearly, $a_8\notin\{a_6,a_7\}$ and any two vertices of $\{y_1,\ldots, y_7\}$ are different. Further, $\{a_8y_6,a_8y_7\}\subseteq E(F)$. Then, we get a new patch $N_0^8$ %(see Fig. \ref{H_4^0}),
which has the boundary same with $N_0^5$. Thus, similar as the analysis of $F-N_0^5$, we can get $F-G_1\in \{H_{83}^m, \ldots, H_{90}^m\}$.

% \textbf{Subcase 3.2.4.}
\textbf{Subcase 2.4.} Edge $e_2\notin \cup_{i=3}^5 E(f_i)$. Then, for $3\leqslant i\leqslant5$, let $f_i\cap (F-G_1)=a_iy_ia_{i+1}$, where $a_6=a_1$. Let $a_7$ and $y_6$ be the third neighbors of $y_1$ and $a_6$ respectively. Then $a_7\neq a_6$ and $y_6\notin\{y_1,\ldots,y_5\}$ by Lemmas \ref{lem1} and \ref{lem2}.

%\textbf{Subcase 3.2.4.1.}
\textbf{Subcase 2.4.1.} There is a face  of $F$   containing the edge set $E_0$. Then,  $a_7$ and $y_6$ have a common neighbor $a_8$ by Observation \ref{claim2}(iii).
 \begin{figure}[h]
    \centering
   \subfigure[$F-G_1=H_{29}$]{
   \label{figB37}
   \includegraphics[height=30mm]{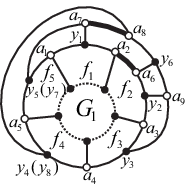}}
    % ~~~~~~~
    \subfigure[the patch $N_m^9$]{
    \label{figH^m}
    \includegraphics[height=30mm]{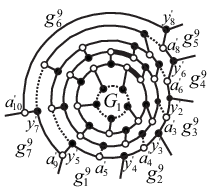}}
   % ~~~~~~~
    \subfigure[$F-G_1=H_{91}^m$]{
    \label{figB38}
    \includegraphics[height=30mm]{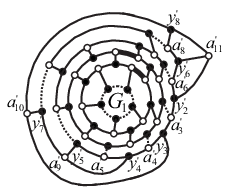}}
    %~~~~~~~
     \subfigure[$F-G_1=H_{92}^m$]{
    \label{figB39}
    \includegraphics[height=30mm]{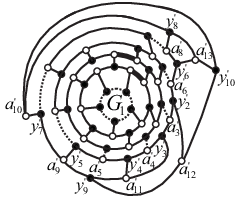}}
    %~~~~~~~
     \subfigure[$F-G_1=H_{93}^m$]{
    \label{figB40}
    \includegraphics[height=30mm]{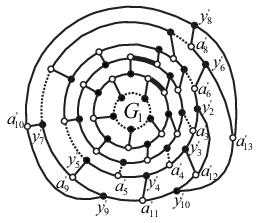}}
    %~~~~~~~
     \subfigure[$F-G_1=H_{94}^m$]{
    \label{figB41}
    \includegraphics[height=30mm]{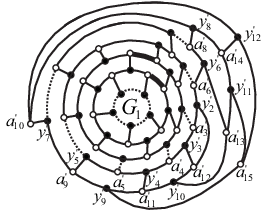}}
    %~~~~~~~
     \subfigure[$F-G_1=H_{95}^m$]{
    \label{figB42}
    \includegraphics[height=30mm]{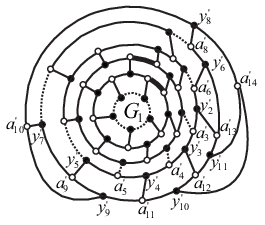}}
    % ~~~~~~~
     \subfigure[$F-G_1=H_{96}^m$]{
    \label{figB43}
    \includegraphics[height=30mm]{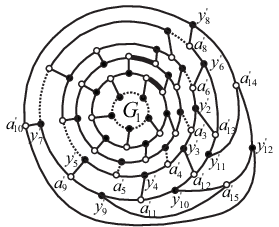}}
    % ~~~~~~~
     \subfigure[$F-G_1=H_{97}^m$]{
    \label{figB44}
    \includegraphics[height=30mm]{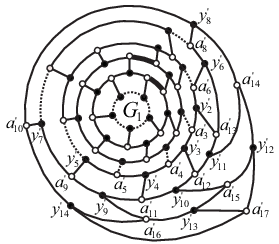}}
     %~~~~~~~
     \subfigure[$F-G_1=H_{98}^m$]{
    \label{figB46}
    \includegraphics[height=30mm]{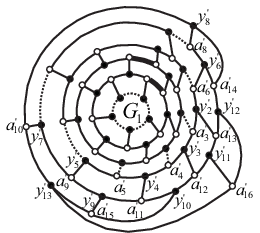}}
    %~~~~~~~
     \subfigure[$F-G_1=H_{99}^m$]{
    \label{figB47}
    \includegraphics[height=30mm]{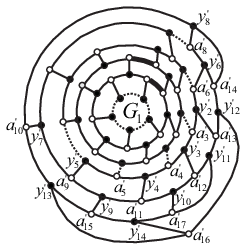}}
    % ~~~~~~~
     \subfigure[$F-G_1=H_{100}^m$]{
    \label{figB48}
    \includegraphics[height=30mm]{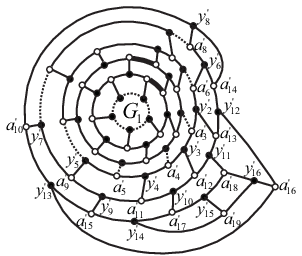}}
    % ~~~~~~~
     \subfigure[$F-G_1=H_{101}^m$]{
    \label{figB49}
    \includegraphics[height=30mm]{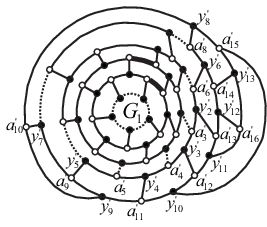}}
    % ~~~~~~~
     \subfigure[$F-G_1=H_{102}^m$]{
    \label{figB51}
    \includegraphics[height=30mm]{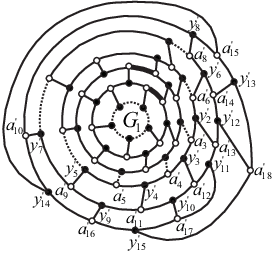}}
    %~~~~~~~
     \subfigure[$F-G_1=H_{103}^m$]{
    \label{figB52}
    \includegraphics[height=30mm]{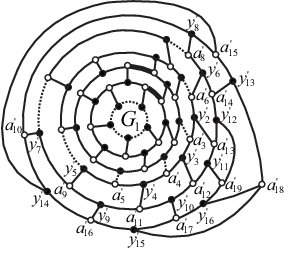}}
    % ~~~~~~~
     \subfigure[$F-G_1=H_{104}^m$]{
    \label{figB53}
    \includegraphics[height=30mm]{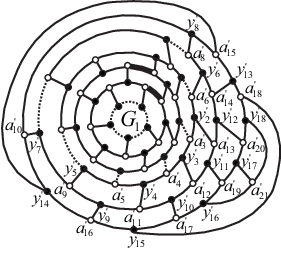}}
    \caption{Illustration for Subcase 2.4.1.}
\end{figure}

 Let  $y_i$ be the third neighbor of $a_i$, $i=7,8$.
By Lemmas \ref{lem1} and \ref{lem2}, $y_7\notin\{y_2,y_3,y_4,y_6,y_8\}$ and $y_8\notin\{y_2,y_3,y_5\}$. If $y_7=y_5$, then $y_8=y_4$. Thus, $F-G_1=H_{29}$ by Lemma \ref{lem1}; see Fig. \ref{figB37}. If $y_7\neq y_5$, then $y_8\neq y_4$ and there is a vertex $a_9$ adjacent to $y_7$ and $y_5$. Further,  $y_7$ and $y_8$ have a common neighbor $a_{10}$ by Observation \ref{claim2}(ii) and $a_{10}\neq a_9$. Thus, we get a new patch $N_0^9$
and the outer face $N_0^9$ is of length 14. Further, the degrees of the vertices on its boundary form a degree-array [2,2,3,2,3,2,3,2,3,2,3,3,2,3] with two white 2-degree vertices and five black 2-degree vertices which is read form vertex $a_{10}$ in the clockwise direction.
Then, every face of $F$ but not $N_0^9$ is either a quadrilateral or a hexagon by Observation \ref{claim2}(i).
Denote $L_1$ the layer around $N_0^9$.
Then, by Lemma \ref{lem1}, we can get a new patch $N_1^9=N_0^9\cup L_1$ which  has the boundary same with $N_0^9$ if  each face of $L_1$ is a hexagon.
 Similarly, we can further get a new patch $N_2^9=N_0^9\cup L_1\cup L_2$ which also has the boundary same with $N_0^9$ if each face of $L_2$ is a hexagon, where $L_2$ is the layer around $N_1^9$.
%Thus, we can
Do this operation until $m$th step such that  the layer $L_{m+1}$ around $N_m^9$ contains a quadrilateral, where $N_m^9=N_0^9\cup L_1\cup \cdots \cup L_m$.
Denote the boundary of $N_m^9$ the cycle $a_{10}'y_8'a_8'y_6'a_6'y_2'a_3'y_3'a_4'y_4'a_5'y_5'a_9'y_7'a_{10}'$ and $g_i^9$($1\leqslant i\leqslant7$) the faces of the layer $L_{m+1}$ in the counterclockwise direction, where $g_1^9$ is the face along the path $a_9'y_5'a_5'y_4'$, see Fig. \ref{figH^m}.

 If $g_1^9$ is a quadrilateral, then $a_9'y_4'\in E(F)$ and $a_{10}'y_3'\in E(F)$. Thus,
$F-G_1=H_{91}^m$ by Lemma \ref{lem1}; see Fig. \ref{figB38}.
If $g_1^9$ is a hexagon, then there is a path $a_9'y_9'a_{11}'y_4'$.

If $g_2^9$ is a quadrilateral, then $a_{11}'y_3'\in E(F)$ while $y_9'$ and $y_2'$ have a common neighbor $a_{12}'$. Further, $a_{10}'$ and $a_{12}'$  have a common neighbor $y_{10}'$. Thus, $F-G_1=H_{92}^m$ by Lemma \ref{lem1}; see Fig. \ref{figB39}.
If $g_2^9$ is a hexagon, then there is a path $a_{11}'y_{10}'a_{12}'y_3'$.

If $g_3^9$ is a quadrilateral, then $a_{12}'y_2'\in E(F)$ while $y_{10}'$ and $y_6'$ have a common neighbor $a_{13}'$.
If $y_8'a_{13}'\in E(F)$, then $a_{10}'y_9'\in E(F)$. Thus, $F-G_1=H_{93}^m$; see Fig. \ref{figB40}. If $y_8'a_{13}'\notin E(F)$, then there is a path $y_8'a_{14}'y_{11}'a_{13}'$. Thus, $y_9'$ and $y_{11}'$ have a common neighbor $a_{15}'$. Thus, $F-G_1=H_{94}^m$ by Lemma \ref{lem1}; see Fig. \ref{figB41}.
If $g_3^9$ is a hexagon, then there is a path $a_{12}'y_{11}'a_{13}'y_2'$.

If $g_4^9$ is a quadrilateral, then
$a_{13}'y_6'\in E(F)$ while $y_8'$ and $y_{11}'$ have a common neighbor $a_{14}'$. If $a_{14}'y_{10}'\in E(F)$, then $a_{10}'y_9'\in E(F)$. That is, $F-G_1=H_{95}^m$; see Fig. \ref{figB42}. If $a_{14}'y_{10}'\notin E(F)$, then there is a path $a_{14}'y_{12}'a_{15}'y_{10}'$. If $y_9'a_{15}'\in E(F)$, then $a_{10}'y_{12}'\in E(F)$. Thus, $F-G_1=H_{96}^m$; see Fig. \ref{figB43}. If $y_9'a_{15}'\notin E(F)$, then there is a path $y_9'a_{16}'y_{13}'a_{15}'$. Further, $a_{10}'$ and $a_{16}'$ have a common neighbor $y_{14}'$. Thus,  $F-G_1=H_{97}^m$ by Lemma \ref{lem1}; see Fig. \ref{figB44}.
If $g_4^9$ is a hexagon, then there is a path $y_6'a_{14}'y_{12}'a_{13}'$.

If $g_5^9$ is a quadrilateral, then $y_8'a_{14}'\in E(F)$. Further, $a_{10}'y_9'\notin E(F)$ since otherwise the face along the path $y_{10}'a_{11}'y_9'a_{10}'y_8'a_{14}'y_{12}'$ is of size at least 8, a contradiction. Thus, there is a path $a_{10}'y_{13}'a_{15}'y_9'$.
 Further, $y_{13}'$ and $y_{12}'$ have a common neighbor $a_{16}'$.
 If $a_{15}'y_{10}'\in E(F)$, then $a_{16}'y_{11}'\in E(F)$. That is, $F-G_1=H_{98}^m$; see Fig. \ref{figB46}.  If $a_{15}'y_{10}'\notin E(F)$, then there is a path $a_{15}'y_{14}'a_{17}'y_{10}'$. %Clearly, $y_{11}'a_{16}'\notin E(F)$.
 If $y_{11}'a_{17}'\in E(F)$, then $a_{16}'y_{14}'\in E(F)$. Thus, $F-G_1=H_{99}^m$; see Fig. \ref{figB47}. If $y_{11}'a_{17}'\notin E(F)$, then there is a path $y_{11}'a_{18}'y_{15}'a_{17}'$ by Lemma \ref{lem1}. Thus, $a_{18}'$ and $a_{16}'$ have a common neighbor $y_{16}'$. By Lemma \ref{lem1}, $F-G_1=H_{100}^m$; see Fig. \ref{figB48}.
If $g_5^9$ is a hexagon, then there is a path $y_8'a_{15}'y_{13}'a_{14}'$.

If  $g_7^9$ is a quadrilateral,
then $a_{10}'y_9'\in E(F)$ and $a_{15}'y_{10}'\in E(F)$. Thus, $F-G_1=H_{101}^m$ by Lemma \ref{lem1}; see Fig. \ref{figB49}.
If $g_7^9$ is a hexagon, then there is a path $a_{10}'y_{14}'a_{16}'y_9'$.

Since the layer $L_{m+1}$ contains a quadrilateral, $g_6^9$ is a quadrilateral, i.e., $a_{15}'y_{14}'\in E(F)$.
If $y_{10}'a_{16}'\in E(F)$, then %the face along the path $y_{11}'a_{12}'y_{10}'a_{16}'y_{14}'a_{15}'y_{13}'$ is of size at least 8,
there is a face of $F$ with size at least 8, a contradiction.
 Hence, there is a path $y_{10}'a_{17}'y_{15}'a_{16}'$.  Further,  $y_{13}'$ and $y_{15}'$ have a common neighbor $a_{18}'$.
If $y_{11}'a_{17}'\in E(F)$, then $y_{12}'a_{18}'\in E(F)$. Thus, $F-G_1=H_{102}^m$; see Fig. \ref{figB51}. If $y_{11}'a_{17}'\notin E(F)$,  then there is a path $y_{11}'a_{19}'y_{16}'a_{17}'$. If $a_{19}'y_{12}'\in E(F)$, then $a_{18}'y_{16}'\in E(F)$. Thus, $F-G_1=H_{103}^m$; see Fig. \ref{figB52}. If $a_{19}'y_{12}'\notin E(F)$, then there is a path $a_{19}'y_{17}'a_{20}'y_{12}'$. Further, $a_{18}'$ and $a_{20}'$ have a common neighbor $y_{18}'$. Thus,  $F-G_1=H_{104}^m$ by Lemma \ref{lem1}; see Fig. \ref{figB53}.
 \begin{figure}[h]
    \centering
    \subfigure[the patch $N_m^0$]{
    \label{figH_m}
    \includegraphics[height=27mm]{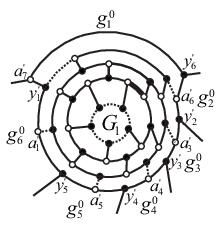}}
   % ~~~~~~~
    \subfigure[$H_{22}\subseteq F-G_1$]{
    \label{figB54}
    \includegraphics[height=25mm]{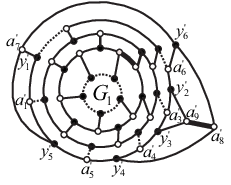}}
    %~~~~~~~
     \subfigure[$F-G_1=H_{105}^m$]{
    \label{figB55}
    \includegraphics[height=26mm]{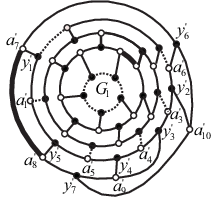}}
    %~~~~~~~
     \subfigure[$F-G_1=H_{106}^m$]{
    \label{figB56}
    \includegraphics[height=26mm]{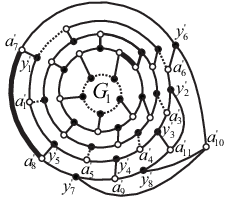}}
    %~~~~~~~
     \subfigure[$F-G_1=H_{107}^m$]{
    \label{figB57}
    \includegraphics[height=25mm]{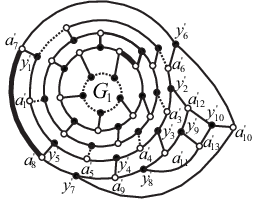}}
    %~~~~~~~
     \subfigure[%$F-G_1=H_{127}$
     $H_{22}\subseteq F-G_1$]{
    \label{figB58}
    \includegraphics[height=26mm]{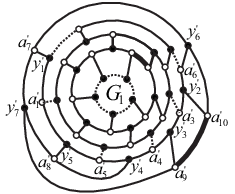}}
    % ~~~~~~~
     \subfigure[$F-G_1=H_{108}^m$]{
    \label{figB59}
    \includegraphics[height=27mm]{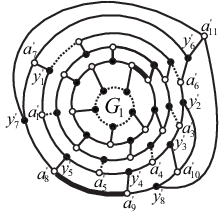}}
    % ~~~~~~~
     \subfigure[$F-G_1=H_{109}^m$]{
    \label{figB60}
    \includegraphics[height=26mm]{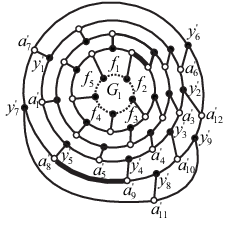}}
     %~~~~~~~
     \subfigure[$F-G_1=H_{110}^m$]{
    \label{figB61}
    \includegraphics[height=26mm]{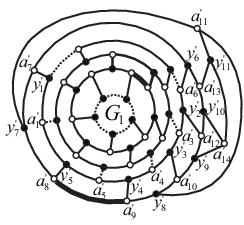}}
   % ~~~~~~~
     \subfigure[%$F-G_1=H_{131}$
    $H_{22}\subseteq F-G_1$]{
    \label{figB62}
    \includegraphics[height=26mm]{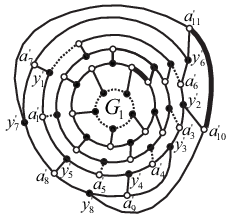}}
    % ~~~~~~~
     \subfigure[$F-G_1=H_{111}^m$]{
    \label{figB63}
    \includegraphics[height=26mm]{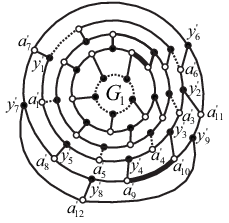}}
     %~~~~~~~
     \subfigure[$F-G_1=H_{112}^m$]{
    \label{figB64}
    \includegraphics[height=26mm]{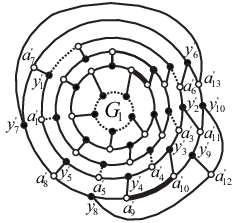}}
   % ~~~~~~~
     \subfigure[$F-G_1=H_{113}^m$]{
    \label{figB65}
    \includegraphics[height=26mm]{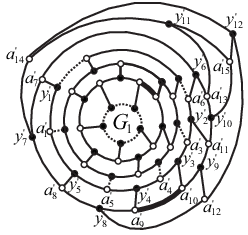}}
    % ~~~~~~~
     \subfigure[%$F-G_1=H_{135}$
     $H_{22}\subseteq F-G_1$]{
    \label{figB66}
    \includegraphics[height=27mm]{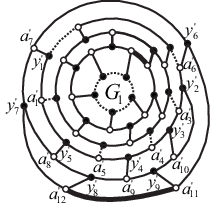}}
     %~~~~~~~
     \subfigure[$F-G_1=H_{114}^m$]{
    \label{figB67}
    \includegraphics[height=26mm]{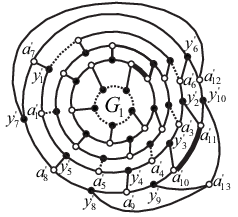}}
   % ~~~~~~~

   \caption{Illustration for Subcase 2.4.2.}

\end{figure}

%\textbf{Subcase 3.2.4.2.}
\textbf{Subcase 2.4.2.} There is no face of $F$ containing the edge set $E_0$. Then $a_7y_6\in E(F)$ by Observation \ref{claim2}(ii).
Thus, we get a new patch $N_0^0$.
Further, the outer face of $N_0^0$ is of length 12 and the degrees of vertices on its boundary form a degree-array [2,2,3,2,3,2,3,2,3,2,3,3] with one white 2-degree vertex and five black 2-degree vertices  which is read from $a_7$ in the clockwise direction. Denote $L_1$ the layer around the patch $N_0^0$.
 If every face of $L_1$ is neither  a quadrilateral nor contains edge $e_2$, then $L_1$ exactly consists of six hexagons by Observation \ref{claim2}(i). Further, the new patch $N_1^0=N_0^0\cup L_1$ has the boundary same as $N_0^0$ if each face of $L_1$ is a hexagon. Denote $L_2$ the layer around $N_1^0$. Then,
 we can also get a new patch $N_2^0=N_0^0\cup L_1\cup L_2$ which also has the boundary same with $N_0^0$ if each face of $L_2$ is a hexagon.
Thus, we can do the above operation repeatedly until $m$th step such that there is a face of the layer $L_{m+1}$ around $N_m^0$ being a quadrilateral or containing edge $e_2$, where $N_m^0=N_0^0\cup L_1\cup\cdots\cup L_m$. Let the boundary of $N_m^0$ be the cycle $a_7'y_6'a_6'y_2'a_3'y_3'a_4'y_4'a_5'y_5'a_1'y_1'a_7'$ and $g_i^0$($1\leqslant i\leqslant6$) the faces of $L_{m+1}$ in the clockwise direction, where $g_1^0$ is the face containing edge $a_7'y_6'$, see Fig. \ref{figH_m}.

If $g_6^0$ is a quadrilateral, then $a_7'y_5'\in E(F)$. Further, $y_4'$ and $y_6'$ have a common neighbor $a_8'$.
Thus, $a_8', y_2'$ and $y_3'$ have a common neighbor $a_9'$ by Lemma \ref{lem1} and the fact that $F_0'$ is bipartite, i.e., $e_2=a_8'a_9'$ and $H_{22}\subseteq F-G_1$; see Fig. \ref{figB54}.
If $e_2\in E(g_6^0)$, then there is a path $a_7'a_8'y_5'$ by Observation \ref{claim2}(ii). Further, $y_4'a_8'\notin E(F)$ since otherwise the face along the path $y_3'a_4'y_4'a_8'a_7'y_6'$ is of size 7 by Lemma \ref{lem1},
a contradiction. Hence, there is a path $y_4'a_9'y_7'a_8'$. Further, $y_7'$ and $y_6'$ have a common neighbor $a_{10}'$ by Observation \ref{claim2}(ii). If
$y_3'a_9'\in E(F)$, then $a_{10}'y_2'\in E(F)$. Thus, $F-G_1=H_{105}^m$; see Fig. \ref{figB55}. If
$y_3'a_9'\notin E(F)$, then there is a path $y_3'a_{11}'y_8'a_9'$.
If $y_2'a_{11}'\in E(F)$, then $y_8'a_{10}'\in E(F)$. Thus, $F-G_1=H_{106}^m$; see Fig. \ref{figB56}. If $y_2'a_{11}'\notin E(F)$, then there is a path $y_2'a_{12}'y_9'a_{11}'$. Further, $a_{10}'$ and $a_{12}'$ have a common neighbor $y_{10}'$.
Thus, $F-G_1=H_{107}^m$ by Lemma \ref{lem1}; see Fig. \ref{figB57}.
If $g_6^0$ is a hexagon, then there is a path $a_7'y_7'a_8'y_5'$.

If $g_5^0$ is a quadrilateral, then $a_8'y_4'\in E(F)$. Further, $y_7'$ and $y_3'$ have a common neighbor $a_9'$. Thus, $H_{22}\subseteq F-G_1$ %and $e_2=a_9'a_{10}'$
 by Lemma \ref{lem1}; see Fig. \ref{figB58}. If $e_2\in E(g_5^0)$, then there is a path $a_8'a_9'y_4'$ by Observation \ref{claim2}(ii).
By the restriction on the faces of $F$, $y_3'a_9'\notin E(F)$.
Thus,  there is a path $y_3'a_{10}'y_8'a_9'$. Further, $y_7'$ and $y_8'$ have a common neighbor $a_{11}'$ by Observation \ref{claim2}(ii). If $a_{10}'y_2'\in E(F)$,
then $a_{11}'y_6'\in E(F)$. Thus, $F-G_1=H_{108}^m$; see Fig. \ref{figB59}. If $a_{10}'y_2'\notin E(F)$, then there is a path $a_{10}'y_9'a_{12}'y_2'$. If $a_{12}'y_6'\in E(F)$, then
$a_{11}'y_9'\in E(F)$. Thus, $F-G_1=H_{109}^m$; see Fig. \ref{figB60}.  If $a_{12}'y_6'\notin E(F)$, then there is a path $a_{12}'y_{10}'a_{13}'y_6'$. Further, $a_{11}'$ and $a_{13}'$ have a common neighbor $y_{11}'$. Thus,  $F-G_1=H_{110}^m$ by Lemma \ref{lem1}; see Fig. \ref{figB61}. If $g_5^0$ is a hexagon, then there is a path $a_8'y_8'a_9'y_4'$.

 If $g_4^0$ is a quadrilateral, then $a_9'y_3'\in E(F)$ while $y_2'$ and $y_8'$ have a common neighbor $a_{10}'$. Thus,
 $H_{22}\subseteq F-G_1$ by Lemma \ref{lem1}; see Fig. \ref{figB62}. If $e_2\in E(g_4^0)$, then there is a path $a_9'a_{10}'y_3'$.
By the restriction on the faces of $F$,  $y_2'a_{10}'\notin E(F)$.
Hence, there is a path $y_2'a_{11}'y_9'a_{10}'$ while $y_8'$ and $y_9'$ have a common neighbor $a_{12}'$ by Observation \ref{claim2}(ii). If  $a_{11}'y_6'\in E(F)$, then $a_{12}'y_7'\in E(F)$. That is, $F-G_1=H_{111}^m$; see Fig. \ref{figB63}. If $a_{11}'y_6'\notin E(F)$, then there is a path $a_{11}'y_{10}'a_{13}'y_6'$. If $y_7'a_{13}'\in E(F)$, then $a_{12}'y_{10}'\in E(F)$. Thus, $F-G_1=H_{112}^m$; see Fig. \ref{figB64}. If $y_7'a_{13}'\notin E(F)$, then there is a path $y_7'a_{14}'y_{11}'a_{13}'$. Further, $a_{12}'$ and $a_{14}'$ have a common neighbor $y_{12}'$. By Lemma \ref{lem1}, $F-G_1=H_{113}^m$; see Fig. \ref{figB65}.
If $g_4^0$ is a hexagon, then there is a path $a_9'y_9'a_{10}'y_3'$.

 If $g_3^0$ is a quadrilateral, then $a_{10}'y_2'\in E(F)$ while $y_6'$ and $y_9'$ have a common neighbor $a_{11}'$. Thus,  $H_{22}\subseteq F-G_1$ by Lemma \ref{lem1}; see Fig. \ref{figB66}. If $e_2\in E(g_3^0)$, then there is a path $a_{10}'a_{11}'y_2'$.
By the restriction on the faces of $F$, $a_{11}'y_6'\notin E(F)$.
Hence, there is a path $a_{11}'y_{10}'a_{12}'y_6'$. Further, $y_9'$ and $y_{10}'$ have a common neighbor $a_{13}'$ by Observation \ref{claim2}(ii). If $a_{12}'y_7'\in E(F)$, then $a_{13}'y_8'\in E(F)$. Thus, $F-G_1=H_{114}^m$; see Fig. \ref{figB67}. If $a_{12}'y_7'\notin E(F)$, then there is a path $a_{12}'y_{11}'a_{14}'y_7'$.
 If $a_{14}'y_8'\in E(F)$, then $a_{13}'y_{11}'\in E(F)$. Thus, $F-G_1=H_{115}^m$; see Fig. \ref{figB68}. If $a_{14}'y_8'\notin E(F)$, then there is a path $a_{14}'y_{12}'a_{15}'y_8'$. Further, $a_{13}'$ and $a_{15}'$ have a common neighbor $y_{13}'$. Thus, $F-G_1=H_{116}^m$  by Lemma \ref{lem1}; see Fig. \ref{figB69}. If $g_3^0$ is a hexagon, then there is a path $a_{10}'y_{10}'a_{11}'y_2'$.

\begin{figure}[h]
    \centering

      \subfigure[$F-G_1=H_{115}^m$]{
    \label{figB68}
    \includegraphics[height=26mm]{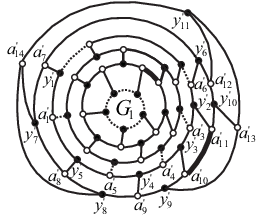}}
     % ~~~~~~~
     \subfigure[$F-G_1=H_{116}^m$]{
    \label{figB69}
    \includegraphics[height=27mm]{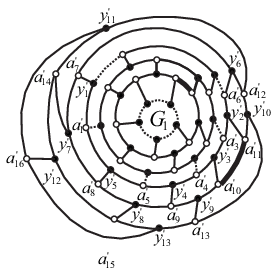}}
     %~~~~~~~
     \subfigure[$H_{22}\subseteq F-G_1$]{
    \label{figB70}
    \includegraphics[height=26mm]{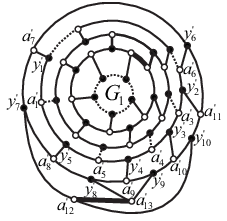}}
   % ~~~~~~~
     \subfigure[$F-G_1=H_{117}^m$]{
    \label{figB71}
    \includegraphics[height=26mm]{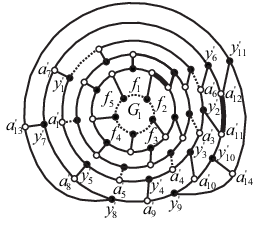}}
     % ~~~~~~~
     \subfigure[$F-G_1=H_{118}^m$]{
    \label{figB72}
    \includegraphics[height=26mm]{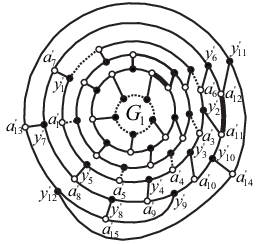}}
         % ~~~~~~~
     \subfigure[$F-G_1=H_{119}^m$]{
    \label{figB73}
    \includegraphics[height=28mm]{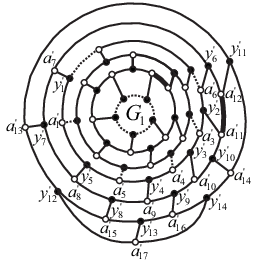}}
   % ~~~~~~~
     \subfigure[$H_{22}\subseteq F-G_1$]{
    \label{figB74}
    \includegraphics[height=28mm]{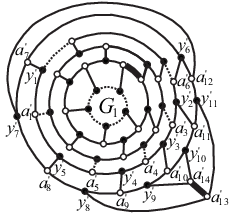}}
       % ~~~~~~~
     \subfigure[$F-G_1=H_{120}^m$]{
    \label{figB76}
    \includegraphics[height=28mm]{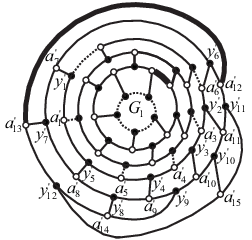}}
   % ~~~~~~~
     \subfigure[$F-G_1=H_{121}^m$]{
    \label{figB77}
    \includegraphics[height=28mm]{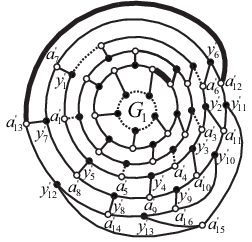}}
     % ~~~~~~~
     \subfigure[$F-G_1=H_{122}^m$]{
    \label{figB78}
    \includegraphics[height=28mm]{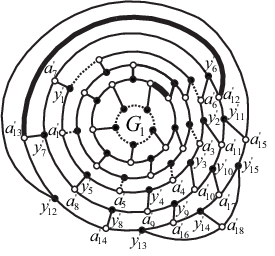}}
       \caption{Illustration for Subcase 2.4.2.}
\end{figure}

 If $g_2^0$ is a quadrilateral, then $a_{11}'y_6'\in E(F)$ while $y_{10}'$ and $y_7'$ have a common neighbor $a_{12}'$. Thus,  $H_{22}\subseteq F-G_1$  by Lemma \ref{lem1}; see Fig. \ref{figB70}. If $e_2\in E(g_2^0)$, then there is a path $a_{11}'a_{12}'y_6'$. Thus,
by the restriction on the faces of $F$, $a_{12}'y_7'\notin E(F)$.
Hence, there is a path $a_{12}'y_{11}'a_{13}'y_7'$. Further, $y_{10}'$ and $y_{11}'$ have a common neighbor $a_{14}'$. If $a_{13}'y_8'\in E(F)$, then $a_{14}'y_9'\in E(F)$. Thus, $F-G_1=H_{117}^m$; see Fig. \ref{figB71}. If $a_{13}'y_8'\notin E(F)$, then there is a path $a_{13}'y_{12}'a_{15}'y_8'$. If $a_{15}'y_9'\in E(F)$, then $y_{12}'a_{14}'\in E(F)$. Hence, $F-G_1=H_{118}^m$; see Fig. \ref{figB72}. If $a_{15}'y_9'\notin E(F)$, then there is a path $a_{15}'y_{13}'a_{16}'y_9'$. Further, $a_{16}'$ and $a_{14}'$ have a common neighbor $y_{14}'$. Thus, $F-G_1=H_{119}^m$  by Lemma \ref{lem1}; see Fig. \ref{figB73}.
If $g_2^0$ is a hexagon, then there is a path $a_{11}'y_{11}'a_{12}'y_6'$.

If $g_1^0$ is a quadrilateral, then $a_{12}'y_7'\in E(F)$ while $y_8'$ and $y_{11}'$ have a common neighbor $a_{13}'$. Thus, $H_{22}\subseteq F-G_1$ by Lemma \ref{lem1}; see Fig. \ref{figB74}. If $e_2\in E(g_1^0)$, then there is a path $a_{12}'a_{13}'y_7'$.
Further, %$a_{13}'y_8'\notin E(F)$ since otherwise %the face along the path $y_{11}'a_{12}'a_{13}'y_8'a_9'y_9'$ is of
%there is a face of $F$ with size at least 7, a contradiction.
by the restriction on the faces of $F$, $a_{13}'y_8'\notin E(F)$.
Thus, there is a path $a_{13}'y_{12}'a_{14}'y_8'$. Further, $y_{11}'$ and $y_{12}'$ have a common neighbor $a_{15}'$. If $a_{14}'y_9'\in E(F)$, then $a_{15}'y_{10}'\in E(F)$. Thus, $F-G_1=H_{120}^m$; see Fig. \ref{figB76}. If $a_{14}'y_9'\notin E(F)$, then there is a path $a_{14}'y_{13}'a_{16}'y_9'$. If $a_{16}'y_{10}'\in E(F)$, then $a_{15}'y_{13}'\in E(F)$. Thus, $F-G_1=H_{121}^m$; see Fig. \ref{figB77}.  If $a_{16}'y_{10}'\notin E(F)$, then there is a path $a_{16}'y_{14}'a_{17}'y_{10}'$. Further, $a_{15}'$ and $a_{17}'$ have a common neighbor $y_{15}'$. Thus,  $F-G_1=H_{122}^m$ by Lemma \ref{lem1}; see Fig. \ref{figB78}.

%In conclusion,  if $F\notin\mathcal{T}$, then $F\in \cup^4_{i=1}\mathscr{F}_i\cup\{F_{12},F_{14},F_{18}, F_{20}\}$ and the necessity is proved.

 \end{document}